\DeclarePairedDelimiter\floor{\lfloor}{\rfloor}
\newcommand{\nt}{\lfloor nt \rfloor}
\newcommand{\ns}{\lfloor ns \rfloor}
\newcommand{\N}{\mathbb{N} }
\newcommand{\Sp}{\mathbb{S}_p}
\newcommand{\Prob}{\mathbb{P}}
\providecommand{\abs}[1]{\lvert #1\rvert}
\providecommand{\babs}[1]{\bigl\lvert #1\bigr\rvert}
\providecommand{\norm}[1]{\lVert #1\rVert}
\providecommand{\Enorm}[1]{\lVert #1\rVert_2}
\DeclareMathOperator{\Cov}{Cov}
\DeclareMathOperator{\Var}{Var}
\newcommand{\E}{\mathbb{E}}
\newcommand{\R}{\mathbb{R}}
\newcommand{\D}{\mathcal{D}}
\newcommand{\B}{\mathcal{B}}
\newcommand{\F}{\mathcal{F}}
\renewcommand{\P}{\mathbb{P}}
\renewcommand{\phi}{\varphi}
\newcommand{\bdm}{\begin{displaymath}}
\newcommand{\edm}{\end{displaymath}}
\DeclareMathOperator*{\argmax}{argmax}
\numberwithin{equation}{section}
\newcommand{\1}{\mathds{1}}
\newtheorem{theorem}{Theorem}[section]
\newtheorem{lemma}[theorem]{Lemma}
\theoremstyle{remark}
\newtheorem{remark}[theorem]{Remark}
\newtheorem{corollary}[theorem]{Corollary}
\newtheorem{proposition}[theorem]{Proposition}
\begin{document}

\begin{frontmatter}
\title{Functional Convergence of Sequential $U$-Processes \\ with Size-Dependent Kernels}
\runtitle{Functional Convergence of Sequential $U$-Processes}

\begin{aug}
\author[A]{\fnms{Christian} \snm{D\"obler}\ead[label=e1]{christian.doebler@hhu.de}},
\author[B]{\fnms{Miko{\l}aj J.} \snm{Kasprzak}\ead[label=e2,mark]{mikolaj.kasprzak@uni.lu}}
\and
\author[B]{\fnms{Giovanni} \snm{Peccati}\ead[label=e3,mark]{giovanni.peccati@uni.lu}}
\address[A]{Mathematisches Institut, Heinrich-Heine-Universit\"at D\"usseldorf,
\printead{e1}}

\address[B]{Department of Mathematics, University of Luxembourg,
\printead{e2,e3}}
\end{aug}

\begin{abstract}
We consider sequences of $U$-processes based on symmetric kernels of a fixed order, that possibly depend on the sample size. Our main contribution is the derivation of a set of analytic sufficient conditions, under which the aforementioned $U$-processes weakly converge to a linear combination of time-changed independent Brownian motions. In view of the underlying symmetric structure, the involved time-changes and weights remarkably depend only on the order of the $U$-statistic, and have consequently a universal nature. Checking these sufficient conditions requires calculations that have roughly the same complexity as those involved in the computation of fourth moments and cumulants. As such, when applied to the degenerate case, our findings are infinite-dimensional extensions of the central limit theorems (CLTs) proved in de Jong (1990) and D\"obler and Peccati (2017). As important tools in our analysis, we exploit the multidimensional central limit theorems established in D\"obler and Peccati (2019)  together with upper bounds on absolute moments of degenerate $U$-statistics by Ibragimov and Sharakhmetov (2002), and also prove some novel multiplication formulae for degenerate symmetric $U$-statistics --- allowing for different sample sizes --- that are of independent interest. We provide applications to random geometric graphs and to a class of $U$-statistics of order two, whose Gaussian fluctuations have been recently studied by Robins {\it et al.} (2016), in connection with quadratic estimators in non-parametric models. In particular, our application to random graphs yields a class of new functional central limit theorems for subgraph counting statistics, extending previous findings in the literature. Finally, some connections with invariance principles in changepoint analysis are established.
\end{abstract}

\begin{keyword}[class=MSC2020]
\kwd[Primary ]{60F17}
\kwd[; secondary ]{62G20}
\kwd{60D05}
\end{keyword}

\begin{keyword}
\kwd{U-statistics}
\kwd{functional limit theorems}
\kwd{contractions}
\kwd{product formulae}
\kwd{Hoeffding decompositions}
\kwd{random geometric graphs}
\kwd{asymptotic properties of estimators}
\kwd{Stein's method}
\end{keyword}

\end{frontmatter}

\section{Introduction}

\subsection{Overview}

Consider a sequence $\{X_i : i=1,2,...\}$ of independent and identically distributed (i.i.d.) random variables with values in some space $(E,\mathcal{E})$. The aim of this paper is to prove a class of Gaussian functional central limit theorems (FCLTs) involving general sequences of {\bf sequential $U$-processes} with symmetric kernels, that is, of c\`adl\`ag processes on the time interval $[0,1]$, obtained by progressively revealing the argument of a symmetric $U$-statistic of order $p\geq 1$ based on the sample $(X_1,...,X_n)$, for $n\geq 1$. The adjective \textit{sequential} is meant to distinguish the main objects studied in the present paper from $U$-processes indexed by abstract classes of kernels -- see e.g. \cite{DP87, AG93}; for the sake of simplicity, for the rest of the paper we will use indifferently the denomination ``$U$-process'' to indicate sequential $U$-processes and function-indexed $U$-processes of the type described above. 

{}

The type of weak convergence we deal with is in the large sample limit $n\to\infty$, and holds in the sense of the Skorohod space $D[0,1]$ of c\`adl\`ag mappings on $[0,1]$, endowed with Skorohod's $J_1$ topology (see e.g. \cite[p. 123]{Bil}). The specific difficulty tackled in our work --- marking a difference with previous contributions (see e.g. \cite{Neu, H79, MT, DP87, AG93}) --- is that we allow the kernels of the considered $U$-statistics to explicitly depend on the sample size $n$, and we do not assume a priori any form of Hoeffding degeneracy.

{}

Despite the generality of the above setup, the limit processes displayed in our results always have the form
\begin{equation}\label{e:limsum}
Z(t) = \sum_{k=1}^p \alpha_{k,p} Z_{k,p} (t), \quad t\in [0,1],
\end{equation}
where each $\alpha_{k,p}\in [0,\infty)$ is a constant depending on the sequence of $U$-statistics under study, and $\{Z_{k,p}(t) : t\in[0,1], \, 1\leq k\leq p\}$ denotes a class of independent centered Gaussian processes obtained as follows: first consider a sequence $\{B_{k}(t) : t\in [0,1],\, 1\leq k\leq p \}$ of independent standard Brownian motions on $[0,1]$, and then set
\begin{equation}\label{e:limpro}
Z_{k,p}(t) := t^{p-k}B_{k} (t^k), \quad t\in[0,1],
\end{equation}
in such a way that 
\begin{equation}\label{e:limcov}
\Gamma_{k,p}(s,t) := \E[Z_{k,p}(t)Z_{k,p}(s)] = (s\wedge t)^p  (s\vee t)^{p-k}, \quad s,t\in [0,1],
\end{equation}
and consequently
\begin{equation}\label{e:limcov2}
\Gamma(s,t) := \E[Z(t)Z(s)] =\sum_{k=1}^p \alpha_{k,p}^2 \, (s\wedge t)^p  (s\vee t)^{p-k}, \quad s,t\in [0,1].
\end{equation}
Note that, in particular, the processes $(Z(t))_{t\in[0,1]}$ appearing in \eqref{e:limsum} all have continuous paths.
Such a rigid asymptotic structure originates from the fact that we exclusively focus on {\it symmetric} $U$-statistics and i.i.d. samples: these strong assumptions yield then the emergence of the `universal' time-changes $t\mapsto t^k$ and time-dependent weights $t\mapsto t^{p-k}$ from purely combinatorial considerations. One should compare this situation with the reference \cite{B94}, where a Gaussian FCLT is proved for sequences of non-symmetric homogenous sums, displaying as possible weak limits arbitrarily time-changed Brownian motions. {} 

The sufficient conditions for weak convergence discussed above are stated in the forthcoming Theorem \ref{maintheo} and Theorem \ref{maintheo2}, and are expressed in terms of the {\bf contraction kernels} canonically associated with a given $U$-statistic (see Section \ref{ss:contractions} for definitions). We will see in Section \ref{s:proofs} that the conditions derived in our paper are a slight strengthening of the sufficient conditions for one-dimensional CLTs derived in \cite[Section 3]{DP18} --- see Remark \ref{r:aa} below for a full discussion of this point.  Some of the additional requirements with respect to \cite{DP18} (in particular, Assumption (a) in Theorem \ref{maintheo} and Assumption (a') in Theorem \ref{maintheo2}) are necessary and sufficient for the pointwise convergence of the covariance functions of the degenerate $U$-processes associated with a generic $U$-statistic via its Hoeffding decompositon: such a technical assumption (that can in principle be relaxed at the cost of more technical statements --- see Remark \ref{r:aa}-(ii)) is unavoidable in the case of degenerate $U$-statistics, and is automatically verified in the applications developed in Section \ref{applis}. 
{}

As discussed in Section \ref{s:proofs} --- and similarly to the main findings of \cite{DP18} --- when applied to Hoeffding degenerate $U$-processes (see Section \ref{setting}), the conditions expressed in Theorems \ref{maintheo} and \ref{maintheo2} are roughly equivalent to the requirement that the joint cumulants of order $\leq 4$ associated with the finite-dimensional restrictions of the $U$-processes under consideration converge to those of an appropriate Gaussian limit, and that such a convergence takes place at a rate of the type $O(1/n^{\alpha})$, where $\alpha >0$.  As such, our findings can be regarded as functional versions of the well-known {\bf de Jong CLT} for degenerate $U$-statistics, first proved in \cite{dej} and then substantially extended in \cite{dobler_peccati} (to which we refer for an overview of the relevant literature). To the best of our knowledge, apart from the reference \cite{B94} (that only deals with homogeneous sums), ours is the first functional version of de Jong's CLT proved in the literature.

{}

{The main findings of our paper also contain an invariance principle by Miller and Sen \cite[Theorem 1]{MS72}, which can be obtained from our results by considering symmetric kernels of order $p$ that do not depend on the sample size and are not degenerate in the sense of Hoeffding, {see Remark \ref{r:whattocheck1} (v).} In this case, when $p\geq 2$ the limiting process \eqref{e:limsum} is such that $\alpha_{k,p} = 0$ for $k=2,...,p$, and $Z$ reduces to a multiple of $t^{p-1} B(t)$, where $B$ is a standard Brownian motion, thus yielding in particular a full version of Donsker's Theorem for sums of i.i.d. random variables in the case $p=1$ (see \cite[p. 90]{Bil}). It is important to notice that this result from \cite{MS72} only requires the square-integrability of the involved $U$-statistics, {whereas our approach is originally best suitable to deal with $U$-statistics whose kernels satisfy some higher moment assumptions.
However, in this particular example, by means of a truncation argument it is possible to reduce the assumptions to the minimal conditions of non-degeneracy and square-integrability of the kernel. 
The stricter moment requirements emerging elsewhere in our work are a direct emanation of the tools we chose to adopt, namely Stein's method and $L^2$ estimates on contraction kernels, which cannot be dropped without replacement, in general.} It is plausible that alternate statements requiring less stringent integrability assumptions could be obtained by following a different route --- but this would demand a significant amount of novel ideas. The reader is referred to \cite{J18} for generalizations of Miller and Sen's results and for a discussion of the relevant literature. See also Remark \ref{r:donsker}.

}

{}

In the last four decades, numerous FCLTs for $U$-processes have been derived by several groups of authors; yet --- to the best of our knowledge and discounting Miller and Sen's result --- none of them has a nature that is directly comparable to our findings. Among the large set of contributions in this domain, we refer the reader to the following relevant sample. References \cite{Neu, H79, MT} contain functional limit theorems for sequences of degenerate $U$-statistics with a kernel independent on the sample size $n$: in such a framework, consistently with the known one-dimensional results (see e.g. \cite{DM}), the limit process lives in a Wiener chaos of order $>1$ and it is therefore non-Gaussian. The already mentioned paper \cite{B94} proves Gaussian FCLTs (in a spirit close to \cite{dej}) for sequences of homogeneous sums: in this case, there is no overlap with our work since symmetric homogeneous sums (that are in principle contemplated in our setting) are necessarily multiples of degenerate $U$-statistics with a kernel not depending on the sample size $n$, and their asymptotic behaviour is consequently non-Gaussian (by virtue of \cite{Neu, H79}). References \cite{DP87, AG93} are influential general contributions to the theory of $U$-processes, containing uniform FCLTs for sequences of $U$-processes indexed by function classes not depending on the sample size, both in the non-degenerate and degenerate case. The recent contribution \cite{CK2017} deals with suprema of $U$-processes indexed by non-degenerate symmetric function classes possibly depending on the sample size and not necessarily verifying a FCLT, and also contains a detailed review of further relevant literature. See also \cite{GM07, chen}, as well as \cite{Bor_book, DlPG} for general references.

{}

As discussed above, our main results are expressed in terms of explicit analytical quantities (e.g. norms of contraction kernels), and they are therefore particularly well-adapted to applications. As a demonstration of this fact, in Section \ref{applis} we deduce two new classes of FCLTs, respectively related to subgraph counting in geometric random graphs (retrieving novel functional versions of one-dimensional CLTs from \cite{JJ, Penrose, BhaGo92, DP18})), and to quadratic $U$-statistics emerging e.g. in the non-parametric estimation of quadratic functionals of compactly supported densities (see e.g. \cite{BR88, LM00, van_der_vaart}). In Section \ref{ss:cgpt}, we also illustrate some connections with invariance principles related to changepoint analysis, see e.g. \cite{CH88, CH_book, Gombay2004}.

{}

We eventually mention the challenging problem of deriving explicit rates of convergence for the FCLTs derived in this paper. While some promising partial results seem to be obtainable by adapting the infinite-dimensional `generator approach' to Stein's method developed in \cite{barbour, barbour2009, kasprzak2018}, we prefer to consider this point as a separate issue, and leave it open for subsequent research.

\subsection{Notation and tightness criteria}\label{ss:nt}

From now on, every random object considered in the paper is assumed to be defined on a common probability space $(\Omega, \mathcal{F}, \P)$, with $\E$ denoting expectation with respect to $\P$. Given a collection of stochastic processes $\{X, \, X_n : n\geq 1\}$ with values in $D[0,1]$, we write $X_n \Longrightarrow X$ to indicate that $X_n$ {\bf weakly converges} to $X$, meaning that $\E[\varphi(X_n)]\to \E[\varphi(X)]$, as $n\to\infty$, for every bounded mapping $\varphi : D[0,1] \to \R$ which is continuous with respect to the Skorohod topology. Given two positive sequences $\{a_n,  b_n\}$, we write $a_n \sim b_n$ whenever $a_n/b_n\to 1$, as $n\to \infty$. We will also use the notation $a_n \lesssim b_n$ to indicate that there exists an absolute finite constant $C$ such that $a_n \leq C \,  b_n$ for every $n$.

{}

In several places of the present paper,  tightness in the space $D[0,1]$ is established by using the following criterion. The argument in the proof reproduces the strategy adopted in \cite[Lemma 3.1]{NouNu}, and is reported for the sake of completeness.

\begin{lemma}\label{lemma_tightness}
Let $X_n = \{X_n(t) : t\in [0,1]\}$, $n\in \N$, be a sequence of stochastic processes with paths a.s. in $D[0,1]$. 
Suppose that there is a stochastic process $X=\{X(t) : t\in [0,1]\}$ such that the finite-dimensional distributions of $X_n$, $n\in\N$, converge to those of $X$, as $n\to\infty$. Then, the paths of $X$ are a.s. continuous and we have $X_n\Longrightarrow X$,
if there are constants $C>0, \, \beta >0$ and $\alpha>0$ such that, for all $n\in\N$  sufficiently large and for all $0\leq s<t\leq1$, 
\begin{align}\label{Bil2.1}
 \E\babs{X_n(t)-X_n(s)}^{\beta}&\leq  C\biggl(\frac{\floor{nt}-\floor{ns}}{n}\biggr)^{1+\alpha}\,.
\end{align}
In particular, in this case the sequence $(X_n)_{n\in\N}$ is tight in $D[0,1]$.
\end{lemma}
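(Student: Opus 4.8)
The plan is to combine a Kolmogorov-type estimate for the candidate limit $X$ with the classical moment criterion for tightness in $D[0,1]$, following the strategy of \cite[Lemma 3.1]{NouNu}.

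\emph{Continuity of $X$.} Fix $0\le s<t\le 1$. By the assumed convergence of the finite-dimensional distributions, $X_n(t)-X_n(s)\Longrightarrow X(t)-X(s)$, and since $x\mapsto\abs{x}^\beta$ is continuous and nonnegative, the portmanteau theorem yields $\E\abs{X(t)-X(s)}^\beta\le\liminf_n\E\abs{X_n(t)-X_n(s)}^\beta$. Inserting \eqref{Bil2.1} and using $(\floor{nt}-\floor{ns})/n\to t-s$ gives $\E\abs{X(t)-X(s)}^\beta\le C(t-s)^{1+\alpha}$, i.e. Kolmogorov's continuity criterion holds; hence $X$ admits a modification with (locally Hölder-)continuous paths, and since only the finite-dimensional laws of $X$ are relevant, I may and do assume from now on that $X$ itself has continuous paths.

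\emph{Tightness.} The core step is to verify the moment hypothesis behind the tightness criterion of \cite[Theorem 13.5]{Bil}: namely, that there is an absolute constant $C'$ such that, for all large $n$ and all $0\le t_1\le t\le t_2\le1$,
\begin{equation*}
\E\bigl[\abs{X_n(t)-X_n(t_1)}^{\beta/2}\,\abs{X_n(t_2)-X_n(t)}^{\beta/2}\bigr]\le C'(t_2-t_1)^{1+\alpha}.
\end{equation*}
The only place where the discretized right-hand side of \eqref{Bil2.1} plays a genuine role is the following observation: applying \eqref{Bil2.1} to any pair $s\le t$ with $\floor{ns}=\floor{nt}$ forces $X_n(s)=X_n(t)$ a.s., so the displayed left-hand side vanishes (a.s., hence in expectation) unless $\floor{nt_1}<\floor{nt}<\floor{nt_2}$; and in that remaining case $\floor{nt_2}-\floor{nt_1}\ge 2$, whence $n(t_2-t_1)>\floor{nt_2}-\floor{nt_1}-1\ge 1$ and therefore $\frac{\floor{nt_2}-\floor{nt_1}}{n}\le(t_2-t_1)+\frac1n\le 2(t_2-t_1)$. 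Granting this, the Cauchy--Schwarz inequality applied to the two factors, followed by \eqref{Bil2.1} and the elementary bound $ab\le\frac14(a+b)^2$ for $a,b\ge0$, yields the displayed estimate (with $C'=C$). Since moreover $\{X_n(0)\}_n$ is tight (being convergent in distribution), and since restricting to large $n$ is harmless because finitely many laws on the Polish space $D[0,1]$ are automatically tight, \cite[Theorem 13.5]{Bil} shows that $(X_n)_{n\in\N}$ is tight in $D[0,1]$ and that all its subsequential weak limits have continuous paths.

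\emph{Conclusion.} By Prohorov's theorem $(X_n)_{n\in\N}$ is relatively compact in $D[0,1]$; any subsequential weak limit $Y$ is a.s. continuous, and the convergence of the finite-dimensional distributions forces the finite-dimensional laws of $Y$ to coincide with those of the (continuous) process $X$, so that $Y\stackrel{d}{=}X$. Hence every subsequential limit equals $X$ in law, and $X_n\Longrightarrow X$. The main — and essentially only — obstacle in this scheme is the bookkeeping around the discretized increment $(\floor{nt}-\floor{ns})/n$: one has to exploit that \eqref{Bil2.1} makes each $X_n$ a.s. constant on the intervals $[k/n,(k+1)/n)$ in order to pass from the discretized moment bound to the continuous form required by Billingsley's criterion; the rest is routine.
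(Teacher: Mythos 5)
Your proof is correct and follows essentially the same route as the paper: portmanteau plus Fatou to get the Kolmogorov-type moment bound and hence continuity of $X$, and then Cauchy--Schwarz combined with \eqref{Bil2.1} to verify Billingsley's moment criterion \cite[Theorem 13.5]{Bil}. The only difference is that you spell out in full the discretization argument (that the product of increments vanishes a.s. unless the three points straddle at least two grid intervals, after which $(\floor{nt_2}-\floor{nt_1})/n\le 2(t_2-t_1)$), whereas the paper merely cites this as "an argument used in \cite[Lemma 3.1]{NouNu}" and records the resulting constant as $3^{1+\alpha}C$; your version yields the slightly cleaner constant $C$, but this is immaterial.
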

\begin{proof}
We are going to use the following well-known criterion from \cite{Bil}: let $X_n = \{X_n(t) : t\in [0,1]\}$, $n\in \N$, be a sequence of stochastic processes with paths a.s. in $D[0,1]$ such that there is a stochastic process $X=\{X(t) : t\in [0,1]\}$ whose paths are a.s. continuous and such that the finite-dimensional distributions of $X_n$, $n\in\N$, converge to those of $X$, as $n\to\infty$.
Then, the sequence $(X_n)_{n\in\N}$, converges in distribution with respect to the Skorohod topology to $X$, if there are finite and strictly positive constants $C_1,\, \alpha$ and $\gamma$ such that, for all $n\in\N$  sufficiently large and for all $0\leq r\leq s\leq t\leq1$, 
\begin{align}\label{Bil}
 \E\Bigl[\babs{X_n(t)-X_n(s)}^\gamma \babs{X_n(s)-X_n(r)}^\gamma\Bigr] &\leq C_1 (t-r)^{1+\alpha} \,.
\end{align}
Note that \eqref{Bil} is in fact a more specialized instance of formula (13.14) in \cite{Bil}. Now assume \eqref{Bil2.1} and observe that
\begin{align}\label{tight3}
 &\E\Bigl[\babs{X_n(t)-X_n(s)}^{\beta/2} \babs{X_n(s)-X_n(r)}^{\beta/2}\Bigr]\notag \\
 &\leq \sqrt{\E\Bigl[ \babs{X_n(t)-X_n(s)}^{\beta}\Bigr]} \sqrt{\E\Bigl[ \babs{X_n(s)-X_n(r)}^{\beta}\Bigr]}\notag\\
 &\leq\sqrt{C \biggl(\frac{\floor{nt}-\floor{ns}}{n}\biggr)^{1+\alpha}}\sqrt{C \biggl(\frac{\floor{ns}-\floor{nr}}{n}\biggr)^{1+\alpha}}\notag\\
  &\leq 3^{1+\alpha}C(t-r)^{1+\alpha}\,,
\end{align}
where the last inequality follows from an argument used in the proof of \cite[Lemma 3.1]{NouNu}. Hence, we conclude that \eqref{Bil} holds true with $\gamma=\beta/2$ and with $C_1=3^{1+\alpha}C$. 
It remains to show that \eqref{Bil2.1} implies that $X$ has a continuous modification. Indeed, from the convergence of finite-dimensional distributions, the continuous mapping theorem and since $\E|Y|^r\leq\liminf_{n\to\infty} \E|Y_n|^r$
for all $r>0$, whenever $Y_n$, $n\in\N$, converges in distribution to $Y$, we conclude from \eqref{Bil2.1} that for all $0\leq s<t\leq1$ we have that
\begin{align*}
\E\babs{X(t)-X(s)}^{\beta}&\leq \liminf_{n\to\infty}\E\babs{X_n(t)-X_n(s)}^{\beta}
\leq C(t-s)^{1+\alpha}\,,
\end{align*}
where $C,\alpha$ and $\beta$ are as in \eqref{Bil2.1}. Thus, the continuity of $X$ follows from the Kolmogorov-Chentsov theorem.
\end{proof}

\subsection{Plan}
Section \ref{setting} contains some general information about U-statistics and U-processes and several useful estimates for contraction operators. The main results of the paper, which give sufficient conditions for functional convergence of U-processes, are presented in Section \ref{ss:main};  some connections to changepoint analysis are described in Section \ref{ss:cgpt}, whereas multidimensional extensions are discussed in Section \ref{mult}. Section \ref{applis} deals with applications of our main results to subgraph counting in random geometric graphs and U-statistics of order 2 with a dominant diagonal component. Finally, Section \ref{s:proofs} contains some further ancillary results, as well as the proofs of the main results.

\section{General setup}\label{setting}

\subsection{Symmetric $U$-statistcs and $U$-processes}

As before, we assume that $X_1,X_2,\dotsc$ is a sequence of i.i.d. random variables taking values in a measurable space $(E,\mathcal{E})$ (that we fix for the rest of this paper), and denote by $\mu$ their common distribution. 
For a fixed $p\in\N$, let 
\[\psi:\bigl(E^p,\mathcal{E}^{\otimes p}\bigr)\rightarrow\bigl(\R,\B(\R)\bigr)\] 
be a symmetric and measurable kernel of order $p$. By ``symmetric'' we mean that, for all $x=(x_1,\dotsc,x_p)\in E^p$ and each $\sigma\in\Sp$ (the symmetric group acting on $\{1,\dotsc,p\}$), one has that 
\[\psi(x_1,\dotsc,x_p)=\psi(x_{\sigma(1)},\dotsc,x_{\sigma(p)})\,.\]
In general, the kernel $\psi = \psi^{(n)}$ can (and will most of the time) depend on an additional parameter $n\in\N$ (the size of the sample in the argument of the associated $U$-statistic), but we will often suppress such a dependence, in order to simplify the notation. We use the symbol $\mu^p$ to denote the $p$-th power of $\mu$ (which is a measure on $(E^p,\mathcal{E}^{\otimes p})$). In what follows, we will write $X:=\{ X_i : i\in\N\}$ and, for $p, \psi, X$ as above and $n\in\N$, we define
\begin{align}\label{e:sus}
 J_p^{(n)}(\psi)&:=J_{p,X}^{(n)}(\psi):= \sum_{J\in\D_p(n)}\psi(X_j,j\in J)=\sum_{1\leq i_1<\dotsc<i_p\leq n}\psi(X_{i_1},\dotsc,X_{i_p})\,.
\end{align}
where $\D_p(n)$ indicates the subsets of size $p$ of $\{1,...,n\}$. We say that the random variable $ J_p^{(n)}(\psi)$ is the {\bf $U$-statistic of order $p$}, {\bf based on $X_1,\dotsc,X_n$ and generated by the kernel $\psi$}. For $p=0$ and a constant $c\in\R$ we further let $J_0(c):=c$. 

{}

Let $p\geq1$, and let $\psi\in L^1(\mu^{p})$ be symmetric. The kernel $\psi$ is called \textbf{(completely) degenerate} or \textbf{canonical} with respect to $\mu$, if 
\begin{equation*}
 \int_E\psi(x_1,x_2,\dotsc,x_p)d\mu(x_1)=0\quad\text{for } \mu^{p-1}\text{-a.a. }(x_2,\dotsc,x_p)\in E^{p-1}\,,
\end{equation*}
or, equivalently, if
\begin{equation*}
 \E\bigl[\psi(X_1,\dotsc,X_p)\,\bigl|\,X_1,\dotsc,X_{p-1}\bigr]=0,\quad\Prob\text{-a.s..}
\end{equation*}
Now assume that $\psi\in L^1(\mu^{p})$ is a symmetric but not necessarily degenerate kernel.
In this case, the random variable $J_p^{(n)}(\psi)$ can be written as the sum of its expectation and a linear combination of symmetric $U$-statistics with degenerate kernels of respective orders $1,\dotsc,p$. More precisely, one has the following \textbf{Hoeffding decomposition} of $J_p^{(n)}(\psi)$:
\begin{align}\label{HDsym}
 J_p(\psi)&=\E\bigl[J_p(\psi)\bigr]+\sum_{k=1}^p \binom{n-k}{p-k} J_k(\psi_k)=\sum_{k=0}^p \binom{n-k}{p-k} J_k(\psi_k)\,,
\end{align}
where 
\begin{align}\label{defpsis}
\psi_k(x_1,\dotsc,x_k)&=\sum_{l=0}^k(-1)^{k-l}\sum_{1\leq i_1<\dotsc<i_l\leq k} g_l(x_{i_1},\dotsc,x_{i_l})
 \end{align}
and the symmetric functions $g_l:E^l\rightarrow\R$ are defined by 
\begin{equation}\label{gk}
 g_l(y_1,\dotsc,y_l):=\E\bigl[\psi(y_1,\dotsc,y_l,X_1,\dotsc,X_{p-l})\bigr]\,,
\end{equation}
in such a way that, for $1\leq k\leq p$, $\psi_k$ is symmetric and degenerate of order $k$. 
In particular, one has $g_0\equiv\psi_0\equiv \E\bigl[\psi(X_1,\dotsc,X_p)\bigr]$ and $g_p=\psi$. See e.g. \cite{serfling, Vit92} for general references on Hoeffding decompositions. 

{}

Similarly to \eqref{e:sus}, we can naturally define a \textbf{$U$-process} 
\begin{equation}\label{e:ut0} 
U = \{ U(t) : {t\in[0,1]}\}
\end{equation}
as follows. For $t\in[0,1]$, write $ U(t):=U_n(t):=J_p^{(\floor{nt})}(\psi)$. Then, for every $t$ one has the Hoeffding decomposition
\begin{equation}\label{e:ut}
 U(t):=\E[U(t)]+ \sum_{k=1}^p \binom{\floor{nt}-k}{p-k} J_k^{(\floor{nt})}(\psi_k).
\end{equation}
Whenever $\psi$ is a symmetric element of $L^2(\mu^p)$, we will also make use of the notation 
\begin{equation}\label{genW}
 W(t):=W_n(t)=\frac{U(t)-\E[U(t)]}{\sigma_n}\,, \quad t\in [0,1],
\end{equation}
where 
\begin{align}
 \sigma_n^2&:=\Var(U_n(1))=\Var(J_p^{(n)}(\psi))=\sum_{k=1}^p\binom{n-k}{p-k}^2\binom{n}{k}\|\psi_k\|_{L^2(\mu^{ k})}^2\label{sigma}\\
 &=\binom{n}{p}\sum_{k=1}^p\binom{p}{k}\binom{n-p}{p-k}\Var\bigl(g_k(X_1,\dotsc,X_k)\bigr)\label{sigma2} \,.
\end{align}
Setting
\begin{equation*}
 \phi^{(k)}:=\phi^{(n,k)}:=\frac{\binom{n-k}{p-k} \psi_k}{\sigma_n}\,, \quad 1\leq k\leq p,
\end{equation*}
one has that each $\phi^{(k)}$ is a degenerate kernel and, using the notation $V_k(t):= J_k^{(\floor{nt})}(\phi^{(k)})$, one infers the following useful representation of $W$
\begin{equation*}
 W(t)=\sum_{k=1}^p \frac{\binom{\floor{nt}-k}{p-k}}{ \binom{n-k}{p-k}}V_k(t)\,, \quad t\in [0,1].
\end{equation*}

{}

It is immediately verified that, for each $n\in\N$, both $ W_n := \{W_n(t): {t\in[0,1]}\}$ and $ U_n := \{ U_n(t) : t\in[0,1]\}$ are random elements with values in $D[0,1]$. As anticipated, the aim of this paper is to deduce verifiable analytical conditions, under which the sequence $\{W_n : n\in\N\}$ converges in distribution to some continuous Gaussian process $Z = \{ Z(t) : {t\in[0,1]}\}$ of the form \eqref{e:limsum}.

\subsection{Contractions}\label{ss:contractions}

{We will now define ``contraction kernels'' obtained from pairs of square-integrable mappings}. These are one of the principal analytical tools exploited in our paper. Given integers $p,q\geq1$, $0\leq l\leq r\leq p\wedge q$ and two symmetric kernels $\psi\in L^2(\mu^{ p})$ and $\phi\in L^2(\mu^{ q})$, we define the \textbf{contraction kernel} $\psi\star_r^l \phi$ on $E^{p+q-r-l}$ by {the relation}
\begin{align}
 &(\psi\star_r^l \phi)(y_1,\dotsc,y_{r-l},t_1,\dotsc,t_{p-r},s_1,\dotsc,s_{q-r})\notag\\
&:=\int_{E^l}\Bigl(\psi\bigl(x_1,\dotsc,x_l, y_1,\dotsc,y_{r-l},t_1,\dotsc,t_{p-r}\bigr)\notag\\
&\hspace{3cm}\cdot\phi\bigl(x_1,\dotsc,x_l, y_1,\dotsc,y_{r-l},s_1,\dotsc,s_{q-r}\bigr)\Bigr)d\mu^{ l}(x_1,\dotsc,x_l)\label{defcontr1}\\
&=\E\Bigl[\psi\bigl(X_1,\dotsc,X_l, y_1,\dotsc,y_{r-l},t_1,\dotsc,t_{p-r}\bigr)\notag\\
&\hspace{3cm}\cdot\phi\bigl(X_1,\dotsc,X_l, y_1,\dotsc,y_{r-l},s_1,\dotsc,s_{q-r}\bigr)\Bigr]\label{defcontr2}\, ,
\end{align}
{for every $(y_1,\dotsc,y_{r-l},t_1,\dotsc,t_{p-r},s_1,\dotsc,s_{q-r})$ belonging to the set $A_0 \subset E^{p+q-r-l}$ such that the right-hand side of the previous equation is well-defined and finite, and we conventionally set it equal to zero otherwise}. Given $\psi, \varphi, r, l$ as above, we write that the kernel $\psi\star_r^l \phi$ is {\bf well-defined} if $\mu^{ p+q-r-l}(A^c_0) =0$ (where $A_0$ is the set defined above). Note that, in general, it is not clear that $\psi\star_r^l \phi$ is well-defined in the sense specified above, or that the obtained contraction is square-integrable. 

{}

If $l=0$, then \eqref{defcontr1} has to be understood as follows: 
\begin{align*}
 &(\psi\star_r^0 \phi)(y_1,\dotsc,y_{r},t_1,\dotsc,t_{p-r},s_1,\dotsc,s_{q-r})\notag\\
 &=\psi(y_1,\dotsc,y_{r},t_1,\dotsc,t_{p-r})\phi( y_1,\dotsc,y_{r},s_1,\dotsc,s_{q-r})\,.
\end{align*}
In particular, if $l=r=0$, then $\psi\star_r^l \phi$ boils down to the \textbf{tensor product}
$\psi\otimes \phi:E^{p+q}\rightarrow\R$ of $\psi$ and $\phi$, given by 
\begin{align*}
 (\psi \otimes\phi)(x_1,\dotsc,x_{p+q})&=\psi(x_1,\dotsc,x_p)\cdot\phi(x_{p+1},\dotsc,x_{p+q})\,.
\end{align*}
We observe that $\psi\star_p^0\psi=\psi^2$ is square-integrable if and only if $\psi\in L^4(\mu^{ p})$. As a consequence, $\psi\star_r^l\phi$ might not be in $L^2(\mu^{ p+q-r-l})$ even though $\psi\in L^2(\mu^{ p})$ and $\phi\in L^2(\mu^{ q})$. Moreover, if $l=r=p$, then $\psi\star_p^p\psi=\|\psi\|_{L^2(\mu^{ p})}^2$ is constant. By Lemma 2.4 of \cite{DP18} the involved contraction kernels are always well-defined. Moreover, the same lemma gives various bounds on norms of contractions that are useful for the present paper.

In what follows, for $p\in\N$ and a function $f:E^p\rightarrow\R$ we write $\tilde{f}$ for its \textbf{symmetrization}, i.e. 
\begin{equation*}
 \tilde{f}(x_1,\dotsc,x_p):=\frac{1}{p!}\sum_{\sigma\in\mathbb{S}_p}f\bigl(x_{\sigma(1)},\dotsc,x_{\sigma(p)}\bigr)\,,\quad (x_1,\dotsc,x_p)\in E^p\,,
\end{equation*}
where $\mathbb{S}_p$ denotes the symmetric group acting on $\{1,\dotsc,p\}$. Note that, if $f\in L^q(\mu^p)$, then $\|\tilde{f}\|_{L^q(\mu^p)}\leq \|{f}\|_{L^q(\mu^p)}$, by the triangle inequality. 

\section{Weak convergence of $U$-processes with symmetric kernels}\label{ss:theo}

\subsection{Main results}\label{ss:main}

For the rest of this section, we let $p\geq 1$ be an integer. Moreover, for positive integers $1\leq r, i, k \leq p$ and $0\leq l\leq p$ such that $0\leq l\leq r\leq i\wedge k$, we let 
$Q(i,k,r,l)$ be the set of quadruples $(j,m,a,b)$ of nonnegative integers such that the following hold:
\begin{enumerate}
\item[ ] {\bf 1.} $j\leq i$ and $m\leq k$, \quad {\bf 2.} $b\leq a\leq r$, \quad {\bf 3.} $b\leq l$, \quad {\bf 4.} $a-b\leq r-l$, \quad 
\item[ ] {\bf 5.} $j+m-a-b\leq i+k-r-l\leq i+k-1$, \quad {\bf 6.} $a\leq j\wedge m$, 
\item[ ] {\bf 7. } if $j=m=p$, then $b=l$ and $a=r\geq1$.
\end{enumerate}
The next statement is the main result of the paper. 


\begin{theorem}[\bf Functional convergence of general $U$-statistics, I]\label{maintheo}
Let the assumptions and notation of Section \ref{setting} prevail, define the sequence $$W_n  = \{ W_n(t) : {t\in[0,1]}\} \, , \quad n\in\N,$$ according to \eqref{genW}, and assume that the following three conditions (expressed by means of the notation \eqref{gk}) are satisfied: 
\begin{enumerate}[{\normalfont (a)}]
 \item for all $1\leq k\leq p$, the real limit 
 \[b_k^2:=\lim_{n\to\infty}\frac{n^{2p-k}}{\sigma_n^2}\bigl(\norm{g_{k}}^2_{L^2(\mu^{k})}-(\E[\psi(X_1,\dotsc,X_p)])^2\bigr)=\lim_{n\to\infty}\frac{n^{2p-k}}{\sigma_n^2}\Var\bigl(g_k(X_1,\dotsc,X_k)\bigr)\] 
 exists; 
 \item for all $1\leq v\leq u\leq p$ and all pairs $(l,r)$ and quadruples $(j,m,a,b)$ of integers such that 
$1\leq r\leq v$, $0\leq l\leq r\wedge(u+v-r-1)$ and $(j,m,a,b)\in Q(v,u,r,l)$
 one has that
 \[\lim_{n\to\infty}\frac{n^{2p-\frac{u+v+r-l}{2}}}{\sigma_n^2}\,\norm{g_{j}\star_a^{b}g_{m}}_{L^2(\mu^{j+m-a-b})}=0\,;\]
 \item there exists some $\epsilon>0$, such that, for all $1\leq r\leq p$, all $0\leq l\leq r-1$ and all quadruples $(j,m,a,b)\in Q(r,r,r,l)$, the sequence
 \[\frac{n^{2p-r-\frac{r-l}{2}+\epsilon}}{\sigma_n^2} \,\norm{g_{j}\star_a^{b}g_{m}}_{L^2(\mu^{j+m-a-b})}\]
 is bounded.
\end{enumerate}
Then, as $n\to\infty$, one has that $W_n \Longrightarrow Z$, where $Z$ is the centered Gaussian process defined in \eqref{e:limsum}, for
$$
\alpha^2_{k,p} = \frac{b^2_k}{k!(p-k)!^2}\,, \quad 1\leq k\leq p. 
$$
%
%
\end{theorem}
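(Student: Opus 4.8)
The plan is to establish convergence of the finite-dimensional distributions and tightness separately, and then invoke Lemma \ref{lemma_tightness}. The crucial observation is that $W_n$ decomposes, via the Hoeffding expansion \eqref{e:ut}, as
\[
W_n(t) = \sum_{k=1}^p \frac{\binom{\floor{nt}-k}{p-k}}{\binom{n-k}{p-k}} V_k(t), \qquad V_k(t) = J_k^{(\floor{nt})}(\phi^{(n,k)}),
\]
where each $\phi^{(n,k)}$ is a degenerate kernel of order $k$. The ratio of binomials converges to $t^{p-k}$ uniformly in $t$, so asymptotically each summand behaves like $t^{p-k}$ times a degenerate $U$-process of order $k$ built on the first $\floor{nt}$ variables. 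Thus the heart of the matter is to show that, for a fixed vector of times $0 \le t_1 < \dots < t_d \le 1$, the random vector $\bigl(V_k(t_j)\bigr)_{1\le k\le p,\, 1\le j\le d}$ converges to a Gaussian vector whose covariance structure produces \eqref{e:limcov}. Since the $V_k$ are (rescaled) degenerate $U$-statistics of distinct orders evaluated at nested sample sizes, I would feed them into the multivariate de Jong-type CLT of \cite{dobler_peccati} (the paper explicitly cites this as the main input): one needs to check that all relevant fourth cumulants and the $L^2$-norms of the contraction kernels $\phi^{(k)} \star_r^l \phi^{(k')}$ vanish asymptotically. This is precisely where Conditions (b) and (c) of the theorem enter — they are exactly the analytic requirements (expressed through the $g_l$, hence through the $\psi_k$ and the contraction norms) that force the off-diagonal and higher-order terms to disappear, with Condition (c) supplying the polynomial rate $O(n^{-\epsilon})$ needed later for tightness.

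**Covariance computation and the role of (a).** In parallel I would compute the limiting covariance $\lim_n \E[W_n(s)W_n(t)]$ directly. Using \eqref{sigma}–\eqref{sigma2}, orthogonality of the Hoeffding components of different orders, and the fact that for nested samples $\E[J_k^{(\floor{ns})}(\psi_k) J_k^{(\floor{nt})}(\psi_k)] = \binom{\floor{ns}\wedge\floor{nt}}{k}\|\psi_k\|^2_{L^2(\mu^k)}$, one gets that the $k$-th term contributes, after dividing by $\sigma_n^2$ and using the combinatorial asymptotics $\binom{n-k}{p-k}\sim n^{p-k}/(p-k)!$ and $\binom{n}{k}\sim n^k/k!$, a factor proportional to $(s\wedge t)^k (s\vee t)^{2(p-k)} \cdot \bigl(n^{2p-k}/\sigma_n^2\bigr)\|\psi_k\|^2_{L^2(\mu^k)}$. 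Condition (a), which grants existence of $b_k^2 = \lim_n (n^{2p-k}/\sigma_n^2)\Var(g_k)$, together with the explicit relation between $\|\psi_k\|_{L^2}^2$ and the variances of the $g_l$'s coming from \eqref{defpsis}, pins this limit down and — after the bookkeeping — yields exactly $\sum_k \alpha_{k,p}^2 (s\wedge t)^p (s\vee t)^{p-k}$ with $\alpha_{k,p}^2 = b_k^2/(k!(p-k)!^2)$, matching \eqref{e:limcov2}. The cross-covariances between different orders $k\ne k'$ vanish in the limit by Condition (b) (controlling $g_j\star_a^b g_m$ norms), so the limiting Gaussian vector has the product structure \eqref{e:limsum}.

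**Tightness.** For tightness I would apply Lemma \ref{lemma_tightness} with $\beta = 4$: it suffices to bound $\E|W_n(t)-W_n(s)|^4 \lesssim \bigl((\floor{nt}-\floor{ns})/n\bigr)^{1+\alpha}$ for some $\alpha>0$. Writing $W_n(t)-W_n(s)$ as a combination of increments of the $V_k$ and of the binomial coefficients, one expands the fourth moment into fourth moments of degenerate $U$-statistics and applies the Ibragimov–Sharakhmetov bounds (cited in the abstract) to dominate them by sums of products of $L^2$-norms of contraction kernels of the kernels $\psi_k$ restricted to the index blocks $\{\floor{ns}+1,\dots,\floor{nt}\}$. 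Each such contraction norm carries a power of $(\floor{nt}-\floor{ns})$; Condition (c) guarantees that, after normalising by $\sigma_n^2$, the "diagonal" terms come with the surplus factor $n^{-\epsilon}$, which upgrades the naive exponent $1$ to $1+\alpha$ for a suitable $\alpha = \alpha(\epsilon) > 0$ (one must be slightly careful to split off the $k=p$, fully degenerate term — handled by the constraint 7 in the definition of $Q$). I expect this fourth-moment bound to be the main technical obstacle: the combinatorics of decomposing a product of four increments of distinct-order degenerate $U$-statistics over overlapping index sets is genuinely intricate, and matching each emerging contraction norm with the correct quadruple $(j,m,a,b)\in Q(\cdot)$ — so that Conditions (b) and (c) literally apply — is where most of the work lies. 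The finite-dimensional convergence, by contrast, should follow fairly mechanically once \cite{dobler_peccati} is invoked with the contraction bounds from Lemma \ref{contrlemma}. Finally, combining fidi convergence, the covariance identification, and tightness via Lemma \ref{lemma_tightness} gives $W_n \Longrightarrow Z$ with the stated $\alpha_{k,p}$.
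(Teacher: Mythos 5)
Your overall skeleton matches the paper's: Hoeffding-decompose $W_n$ into the degenerate processes $V_k$, prove f.d.d.\ convergence and tightness separately, identify the covariance to get \eqref{e:limcov2}, and assemble via Lemma \ref{lemma_tightness}. The tightness half is essentially what the paper does: Lemma \ref{lemma_tightness} with $\beta=4$, the Ibragimov--Sharakhmetov fourth-moment bounds for degenerate $U$-statistics (this is exactly Theorem \ref{gentightness} in the paper, proved in Subsections \ref{ss:tgd}--\ref{ss:tgg}), and Condition (c) supplying the extra $n^{-\epsilon}$. Your covariance computation is also the one in Remark \ref{fidirem}(iii).

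The genuine gap is in the f.d.d.\ step, which you describe as ``fairly mechanical once \cite{dobler_peccati} is invoked.'' It is not, and the paper flags exactly why: the multivariate de Jong CLTs in \cite{DP18, dobler_peccati} are formulated for vectors of degenerate $U$-statistics that share the \emph{same} sample size $n$. Here the vector $\bigl(V_k(t_j)\bigr)_{k,j}$ involves $U$-statistics built on the nested but \emph{different} sample sizes $\lfloor nt_1\rfloor,\dots,\lfloor nt_m\rfloor$, and the Stein-type bound from \cite[Lemma 4.1]{DP18} needs, as input, control of the variances of the Hoeffding components of the product $J_{p_i}^{(n_i)}(\phi^{(i)})\,J_{p_k}^{(n_k)}(\phi^{(k)})$. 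The existing product formula (\cite[Proposition 2.6]{DP18}) only covers the case $n_i=n_k$, so one cannot simply plug in and check contraction norms. The paper therefore first proves a new product formula (Proposition \ref{pform}) for symmetric degenerate $U$-statistics with different sample sizes $n\le m$ --- the Hoeffding components $U_M$ are no longer symmetric, but the formula is explicit enough to give the variance bound \eqref{varum} that feeds into Proposition \ref{fidiprop}. This is the main technical novelty that makes the f.d.d.\ step work, it is advertised in the abstract as of independent interest, and your proposal does not address the issue. A secondary omission is Lemma \ref{genulemma}, which is what converts the naturally-arising contraction conditions on the $\psi_k$ into the conditions on the $g_j\star_a^b g_m$ that appear in the statement (this is where the set $Q(\cdot)$ enters); you invoke $Q$ only in the tightness discussion, but it is used in both halves.

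Put differently: you located the difficulty in the tightness bookkeeping, whereas the paper's actual obstacle --- and the reason a new section of technical results exists --- is the mixed-sample-size product formula needed for the multivariate CLT. Without something playing the role of Proposition \ref{pform}, your f.d.d.\ argument does not close.
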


\begin{remark}{\rm The contractions appearing at Point (c) of Theorem \ref{maintheo} also appear at Point (b) of the same statement. In particular the requirement at Point (c) can be rephrased by saying that, for all $(j,m,a,b)\in Q(r,r,r,l)$, the sequence
 \[\frac{n^{2p-r-\frac{r-l}{2}}}{\sigma_n^2} \,\norm{g_{j}\star_a^{b}g_{m}}_{L^2(\mu^{j+m-a-b})}, \quad n\geq 1,\]
converges to zero as $O(1/n^{\epsilon})$, for some $\epsilon >0$. A similar remark applies to Point (b') and Point (c') of Theorem \ref{maintheo2}.
}
\end{remark}

\begin{remark}\label{r:whattocheck}{\rm In the case $p=2$, after removing redundant terms, verifying conditions (b) and (c) of Theorem \ref{maintheo} boils down to checking that the following quantities converge to zero, as $n\to \infty$:
\begin{enumerate}
\item[] {\bf 1.} $\frac{n^2}{\sigma_n^2}\|g_2\|_{L^2(\mu^2)}\left|\mathbb{E}g_2(X_1,X_2)\right|$,\quad  {\bf 2.} $\frac{n^{2}}{\sigma_n^2}\|g_1\star_1^0 g_2\|_{L^2(\mu^2)}$, \quad {\bf 3.} $\frac{n^{5/2}}{\sigma_n^2}\|g_1\star_1^1 g_2\|_{L^2(\mu)}$,
\item[] {\bf 4.} $\frac{n^{3/2}}{\sigma_n^2}\|g_2\star_1^0 g_2\|_{L^2(\mu^3)}$, \quad {\bf 5.} $\frac{n^{2}}{\sigma_n^2}\|g_2\star_1^1 g_2\|_{L^2(\mu^2)}$, 
\item[] {\bf 6.} $\frac{n^{3/2}}{\sigma_n^{2}}\|g_2\star_0^0 g_1\|_{L^2(\mu^3)}=\frac{n^{3/2}}{\sigma_n^{2}}\|g_1\|_{L^2(\mu)}\|g_2\|_{L^2(\mu^2)}$,
\end{enumerate}
and that the following sequences are bounded for some $\epsilon>0$:
\begin{enumerate}
\item[] {\bf i)} $n\mapsto \frac{n^{5/2+\epsilon}}{\sigma_n^2}\left(\mathbb{E}g_2(X_1,X_2)\right)^2$, \quad {\bf ii)} $n\mapsto\frac{n^{5/2+\epsilon}}{\sigma_n^{2}}\|g_1\|_{L^2(\mu)}\left|\mathbb{E}g_2(X_1,X_2)\right|$,
\item[] {\bf iii)} $n\mapsto\frac{n^{5/2+\epsilon}}{\sigma_n^2}\|g_1\|_{L^4(\mu)}^2$,\quad {\bf iv)} $n\mapsto\frac{n^{1+\epsilon}}{\sigma_n^2}\|g_2\|_{L^4(\mu^2)}^2$.
\end{enumerate}
Recall also that in this case we have that $g_2=\psi$.}
\end{remark}
{}

When the Hoeffding decomposition of a given $U$-statistic is directly provided, it is more convenient to work with the kernels $\{\psi_k\}$ defined in formula \eqref{defpsis}, rather than with the class $\{g_k\}$. The next statement allows one to obtain the same conclusion as in Theorem \ref{maintheo} by uniquely checking conditions related to the family $\{\psi_k\}$ defined in \eqref{defpsis}.

\begin{theorem}[\bf Functional convergence of general $U$-statistics, II]\label{maintheo2}
The conclusion of Theorem \ref{maintheo} continues to hold if the following three conditions {\rm (a')}, {\rm (b')} and {\rm (c')} replace conditions {\rm (a)}, {\rm (b)} and {\rm (c)}: 
 \begin{enumerate}[{\normalfont (a')}]
 \item for all $1\leq k\leq p$, the real limit $b_k^2:=\lim_{n\to\infty}\frac{n^{2p-k}}{\sigma_n^2}\norm{\psi_{k}}^2_{L^2(\mu^{k})}$ exists;
  \item for all $1\leq v\leq u\leq p$ and all pairs $(l,r)$ of integers such that 
$1\leq r\leq v$, $0\leq l\leq r\wedge(u+v-r-1)$,
  \[\lim_{n\to\infty}\frac{n^{2p-\frac{u+v+r-l}{2}}}{\sigma_n^2}\,\norm{\psi_{v}\star_r^{l}\psi_{u}}_{L^2(\mu^{v+u-r-l})}=0\,;\]
  \item there exists some $\epsilon>0$, such that, for all $1\leq r\leq p$ and $0\leq l\leq r-1$, the sequence
  \[\frac{n^{2p-r-\frac{r-l}{2}+\epsilon}}{\sigma_n^2}\norm{\psi_{r}\star_r^{l}\psi_{r}}_{L^2(\mu^{r-l})}\]
 is bounded.
 \end{enumerate}
\end{theorem}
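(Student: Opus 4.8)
\textbf{Proof plan for Theorem \ref{maintheo2}.} The plan is to show that conditions (a'), (b'), (c') in terms of the kernels $\{\psi_k\}$ are \emph{sufficient} for conditions (a), (b), (c) in terms of the kernels $\{g_k\}$, so that the conclusion follows from Theorem \ref{maintheo}. The starting point is the defining relation \eqref{defpsis}, which expresses each $\psi_k$ as a signed sum of the $g_l$ with $l\leq k$, together with the inversion formula
\begin{equation*}
g_k(x_1,\dotsc,x_k) = \sum_{l=0}^{k} \sum_{1\leq i_1<\dotsc<i_l\leq k} \psi_l(x_{i_1},\dotsc,x_{i_l}),
\end{equation*}
which expresses each $g_k$ conversely as an (unsigned) sum of the $\psi_l$, $l\le k$ (this is the classical inversion of the Hoeffding recursion, and can also be read off \eqref{HDsym} applied to a fixed sample of size $k$; note $g_0=\psi_0$). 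Both relations involve only finitely many terms with combinatorial coefficients that do not depend on $n$, so all estimates below will lose at most an $n$-independent constant.

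First I would treat condition (a). Using $g_k - \E[\psi] \cdot \1 = \sum_{l=1}^{k}\sum_{i_1<\dotsc<i_l} \psi_l(x_{i_1},\dotsc,x_{i_l})$ and orthogonality of the summands across distinct degrees (the $\psi_l$ are canonical, hence $U$-statistics of different orders built from them are $L^2$-orthogonal, and so are distinct index-tuples at the same order), one gets
\begin{equation*}
\norm{g_k}_{L^2(\mu^k)}^2 - (\E[\psi(X_1,\dotsc,X_p)])^2 = \sum_{l=1}^{k} \binom{k}{l} \norm{\psi_l}_{L^2(\mu^l)}^2 .
\end{equation*}
Multiplying by $n^{2p-k}/\sigma_n^2$ and noting that $n^{2p-k} = o(n^{2p-l})$ for $l<k$ while the $l=k$ term is exactly $\binom{k}{k}=1$ times the quantity in (a'), condition (a') forces the limit in (a) to exist with $b_k^2$ (in the $g$-sense) equal to $b_k^2$ (in the $\psi$-sense) — the lower-degree contributions vanish in the limit because they carry strictly smaller powers of $n$. (Here one uses that $\sigma_n^2$ is, by \eqref{sigma}, comparable to $\max_k n^{2p-k}\norm{\psi_k}^2$, so these ratios are genuinely $o(1)$ and not merely formal.)

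Next, for conditions (b) and (c), the key step is to bound each contraction norm $\norm{g_j\star_a^b g_m}_{L^2}$ by a finite sum of contraction norms $\norm{\psi_{j'}\star_{a'}^{b'}\psi_{m'}}_{L^2}$. Substituting the expansion $g_j = \sum_{j'\le j}\sum_{\text{tuples}}\psi_{j'}$ and similarly for $g_m$ into the contraction, and using bilinearity of $\star_a^b$ together with the triangle inequality, one reduces to controlling $\norm{\psi_{j'}\star_a^b \psi_{m'}}_{L^2}$ for $j'\le j\le i$, $m'\le m\le k$ (after relabelling and accounting for which of the contracted arguments fall on "new" versus "old" variables — this produces contractions $\psi_{j'}\star_{a'}^{b'}\psi_{m'}$ with $a'\le a$, $b'\le b$ and parameters still obeying the structural constraints). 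The point is then purely arithmetic: one must check that the power of $n$ attached to the $g$-contraction in (b), namely $n^{2p-\frac{u+v+r-l}{2}}/\sigma_n^2$, is dominated (up to constants) by the power attached to each relevant $\psi$-contraction in (b'), and likewise for (c) versus (c'). Concretely, one verifies that replacing $(v,u)$ by $(j',m')$ with $j'\le v$, $m'\le u$ and the contraction parameters $(r,l)$ by admissible $(a',b')$ only \emph{increases} the exponent $2p - \tfrac{(\text{order sum})+(\text{contraction index})}{2}$ relative to what (b') or (c') guarantees, so that (b') implies (b) and (c') implies (c) termwise. I expect the bookkeeping needed to match the combinatorial sets $Q(v,u,r,l)$ with the admissible parameter ranges arising from this substitution — verifying that every quadruple produced lies in the appropriate $Q$, and that no quadruple in $Q$ is missed by (b')/(c') after accounting for the $o(1)$ powers — to be the main obstacle, as it requires carefully checking all seven defining inequalities of $Q$ against the algebra of the Hoeffding expansion. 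Lemma \ref{contrlemma}, in particular parts (ii), (iii) and (vi), will be used to guarantee that all the $\psi$-contractions appearing are themselves square-integrable (so the bounds are not vacuous), using that square-integrability of $\psi_r\star_r^l\psi_r$ for all $r,l$ — which is implicit in (c') being a statement about finite quantities — propagates to all other contractions. Once (a)–(c) are established, Theorem \ref{maintheo} applies verbatim and yields $W_n \Longrightarrow Z$ with the same weights $\alpha_{k,p}^2 = b_k^2/(k!(p-k)!^2)$, completing the proof.
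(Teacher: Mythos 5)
Your plan — derive (a)--(c) from (a')--(c') and invoke Theorem \ref{maintheo} — cannot work, and the paper says so explicitly: Remark \ref{r:aa}(iii) states that Conditions (b) and (c) imply (b') and (c'), \emph{but not conversely}. The obstruction is concrete. When you expand $g_j\star_a^b g_m$ via the inversion formula $g_k=\sum_{l\le k}\sum\psi_l(\cdots)$ (which is correct), you unavoidably produce tensor products $\psi_{j'}\star_0^0\psi_{m'}=\psi_{j'}\otimes\psi_{m'}$ and terms involving $\psi_0=g_0=\E[\psi]$. These objects are genuinely constrained by (b) and (c) — in the $p=2$ case of Remark \ref{r:whattocheck}, Points 6, i) and ii) impose decay/boundedness on $\|g_1\|\|g_2\|$, on $(\E\psi)^2$ and on $\|g_1\||\E\psi|$ — yet (a'), (b'), (c') say nothing whatever about $\E[\psi]$ or about contractions with $r=0$, since (b') requires $r\ge1$. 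So (b'), (c') are strictly weaker than (b), (c): your claimed termwise bound of each $g$-contraction by $\psi$-contractions appearing in (b')/(c') has an irreducible gap at the $r=0$ and $j'=0$ (or $m'=0$) terms, and no amount of bookkeeping over the constraints defining $Q$ closes it. (Your observation that (a) and (a') are equivalent, with the same $b_k^2$, is correct and is exactly what Remark \ref{r:aa}(ii) asserts — but that equivalence does not propagate to the other two conditions.)

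The paper's actual proof of Theorem \ref{maintheo2} is direct and does not pass through Theorem \ref{maintheo}. Finite-dimensional convergence is established by Theorem \ref{fiditheo}, whose hypotheses are formulated verbatim in terms of contractions $\psi_v\star_r^l\psi_u$ of the Hoeffding kernels — i.e., exactly (a') and (b'). Tightness is established by Theorem \ref{gentightness}-(i), whose hypothesis is condition (c') verbatim. The reduction between the two formulations of the main theorem goes the other way: Lemma \ref{genulemma} bounds $\|\psi_i\star_r^l\psi_k\|$ by a maximum of $g$-contraction norms over $Q(i,k,r,l)$, so (b)$\Rightarrow$(b') and (c)$\Rightarrow$(c'), and this is how Theorem \ref{maintheo} is derived (via Corollary \ref{fidicor} and Theorem \ref{gentightness}-(ii)). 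In short, Theorem \ref{maintheo2} is the more primitive statement and Theorem \ref{maintheo} is the corollary; your proposal inverts that dependency, and the inversion is false.
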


{}
\begin{remark}\label{r:whattocheck1} {\rm 
In case $p=2$, verifying conditions (b') and (c') of Theorem \ref{maintheo2} boils down to checking that the following quantities converge to zero as $n\to\infty$:
\begin{enumerate}
\item[] {\bf 1.} $\frac{n^2}{\sigma_n^2}\|\psi_1\star_1^0\psi_2\|_{L^2(\mu^2)}$, \quad {\bf 2.} $\frac{n^{5/2}}{\sigma_n^2}\|\psi_1\star_1^1\psi_2\|_{L^2(\mu)}$, \quad {\bf 3.} $\frac{n^{3/2}}{\sigma_n^2}\|\psi_2\star_1^0\psi_2\|_{L^2(\mu^{3})}$,\\{\bf 4.} $\frac{n^2}{\sigma_n^2}\|\psi_2\star_1^1\psi_2\|_{L^2(\mu^2)}$,
\end{enumerate}
and the following sequences are bounded for some $\epsilon>0$:
\begin{enumerate}
\item[] {\bf i)} $n\mapsto\frac{n^{5/2+\epsilon}}{\sigma_n^2}\|\psi_1\star_1^0\psi_1\|_{L^2(\mu)}$, \quad {\bf ii)} $n\mapsto\frac{n^{1+\epsilon}}{\sigma_n^2}\|\psi_2\star_2^0\psi_2\|_{L^2(\mu^2)}$, \\{\bf iii)} $n\mapsto \frac{n^{3/2+\epsilon}}{\sigma_n^2}\|\psi_2\star_2^1\psi_2\|_{L^2(\mu)}$.
\end{enumerate}}
\end{remark}
\begin{remark}\label{r:aa} {\rm
\begin{enumerate}

\item[(i) ]  By inspection of pur proofs, one sees that Conditons (a) and (b) in Theorem \ref{maintheo} (resp. Conditions (a') and (b') in Theorem \ref{maintheo2}) are sufficient conditions for the convergence of the finite-dimensional distributions of $W_n$ towards those of $Z$, whereas Conditons (c) and (c') therein imply tightness. Condition (b') in Theorem \ref{maintheo2} implicitly appears in \cite[Section 4 and Section 5]{DP18}, as an analytical sufficient condition ensuring that (in the notation of the present paper) $W_n(1)$ converges in distribution to a one-dimensional standard Gaussian random variable. On the other hand, Condition (b) in Theorem \ref{maintheo} is a substantial improvement of the sufficient conditions for one-dimensional asymptotic normality that can be deduced from \cite[Theorem 5.2]{DP18}. The difference between the conditions emerging from \cite[Theorem 5.2]{DP18} and those deduced in the present paper is explained by the fact that our findings use instead Lemma \ref{genulemma} below, which is a refined version of \cite[Lemma 5.1]{DP18}.
 
\item[ (ii)]  It will become clear from the discussion to follow that Condition (a) in Theorem \ref{maintheo} and Condition (a') in Theorem \ref{maintheo2} are equivalent for the same values of $b^2_k$, that is: for every $k=1,...,p$, the limit $\lim_{n\to\infty}\frac{n^{2p-k}}{\sigma_n^2}\Var\bigl(g_k(X_1,\dotsc,X_k)\bigr)$ exists and is finite if and only if the same holds for $\lim_{n\to\infty}\frac{n^{2p-k}}{\sigma_n^2}\norm{\psi_{k}}^2_{L^2(\mu^{k})}$, and in this case the two limits coincide.

\item[ (iii)] The proofs of Theorems \ref{maintheo} and \ref{maintheo2} will show that Conditions (b) and (c) in Theorem \ref{maintheo} imply Conditons (b') and (c') in Theorem \ref{maintheo2}, whereas the opposite implication does not hold in general. 

\item[(iv)] ({\it Relaxing Conditions} (a) {\it and} (a')) Suppose that all the assumptions of Theorem \ref{maintheo2} are verified, {\it except} for Condition (a'). In such a situation, we can define $ b_{k} ^2(n):= \frac{n^{2p-k}}{\sigma_n^2}\norm{\psi_{k}}^2_{L^2(\mu^{k})}$, $k=1,...,p$, and observe that, for every $k$, the mapping $n\mapsto b^2_{k}(n)$ is bounded (by virtue of \eqref{sigma}). For every $n\geq 1$, we set $Z_n$ to be the Gaussian process obtained from \eqref{e:limsum} by replacing the coefficient $\alpha_{k,p}^2$ with
$$
\alpha^2_{k,p} (n):= \frac{b^2_k(n)}{k!(p-k)!^2}\,, \quad 1\leq k\leq p. 
$$
A standard compactness argument now implies that, for every diverging sequence $\{ n_m \}\subset \N$, there exists a subsequence $\{n'_m\}\subset \{n_m\}$ such that $\alpha^2_{k,p} (n'_m)$ converges to a finite limit $\widehat{\alpha}^2_{k,p}$ as $n'_m\to \infty$ (the value of $\widehat{\alpha}^2_{k,p}$ depending in general on the choice of the subsequence $\{n'_m\}$), so that $Z_{n'_m}$ converges in distribution --- as a random element with values in $D[0,1]$ --- to the Gaussian process obtained from \eqref{e:limsum} by replacing the coefficient $\alpha_{k,p}^2$ with $\widehat{\alpha}^2_{k,p}$. Applying the triangle inequality together with Theorem \ref{maintheo2} to each extracted subsequence $n'_m$, we infer the following conclusion: if $\rho $ is any distance metrizing weak convergence on $D[0,1]$ (see e.g. \cite[Section 11.3]{Dudley}), then for every diverging sequence $\{ n_m \}\subset \N$ there exists a subsequence $\{n'_m\}\subset \{n_m\}$ such that
\begin{equation*}
\rho(W_{n'_m}, Z_{n'_m})\longrightarrow 0, \quad \mbox{as}\,\, n'_m\to \infty,
\end{equation*}
where $\rho(W_n, Z_n)$ is shorthand for the distance between the distributions of $W_n$ and $Z_n$, as random elements with values in $D[0,1]$; one sees immediately that this conclusion is equivalent to the asymptotic relation
\begin{equation}\label{e:rhocv}
\rho(W_n, Z_n)\longrightarrow 0, \quad \mbox{as}\,\, n\to \infty.
\end{equation}
By virtue of Points (ii)-(iii) of the present remark, the exact same conclusion holds if one supposes that all assumptions of Theorem \ref{maintheo} are verified, except for Condition (a). In view of the content of Point (i) above, it follows that Theorem \ref{maintheo} and Theorem \ref{maintheo2} contain and substantially extend the one-dimensional qualitative CLT stated in \cite[Section 5]{DP18}. One should notice that the techniques developed in  \cite{DP18} also allow one to deduce explicit rates of convergence, and that such a feature does not extend to our infinite-dimensional results.
\item[(v)] In \cite[Theorem 1]{MS72} Miller and Sen have proved an invariance principle for the situation of a non-degenerate kernel $\psi\in L^2(\mu^p)$ of order $p$ that does not depend on the sample size $n$. This in particular implies that $b_1^2=(p-1)!^2>0$ and $b_l=0$ for $l=2,\dotsc,p$. Moreover, \eqref{sigma2} implies that 
\[\sigma_n^2\sim \frac{n^{2p-1}}{(p-1)!^2}\Var(g_1(X_1))\,.\]
We now sketch an argument implying that this theorem can be deduced from our results even though the norms of the involved contractions are in general only finite for kernels in $L^4(\mu^p)$. 
In order to cope with this technical issue, for $K\in\N$ we introduce the truncated kernel 
$\psi^{(K)}:=\psi\1_{\{|\psi|\leq K\}}\in L^4(\mu^p)$. It then easily follows from dominated convergence that, with obvious notation, 
one has that $g_l^{(K)}$ converges to $g_l$ as $K\to\infty$ $\mu^l$-a.e. and in $L^2(\mu^l)$, $l=1,\dotsc,p$. In particular, for large enough $K$, $K\geq K_0$ say, it also holds that $(b^{(K)}_1)^2=(p-1)!^2>0$ and $b^{(K)}_l=0$ for $l=2,\dotsc,p$. Writing $U_n^{(K)}=J_p^{(\lfloor nt\rfloor)}(\psi^{(K)})$ for the $U$-process induced by the kernel $\psi^{(K)}$ and 
\[W_n^{(K)}(t):=\frac{ U_n^{(K)}(t)-\E\bigl[U_n^{(K)}(t)\bigr]}{\sigma_n^{(K)}}\]  for its normalized version, 
where $(\sigma_n^{(K)})^2=\Var(U_n^{(K)}(1))$, we have 
\begin{align}\label{decWn}
 W_n(t)&=\frac{\sigma_n^{(K)}}{\sigma_n} W_n^{(K)}(t)+\frac{J_p^{(\lfloor nt\rfloor)}(\rho^{(K)})}{\sigma_n}\,,
\end{align}
where $\rho^{(K)}=\psi\1_{\{|\psi|> K\}}-\E[\psi(X_1,\dotsc,X_p)\1_{\{|\psi(X_1,\dotsc,X_p)|> K\}}]$. Note that 
$\rho^{(K)}$ converges to $0$ $\mu^p$-a.e. and in $L^2(\mu^p)$ as $K\to\infty$.

We will apply 
\cite[Theorem 3.2]{Bil} to the decomposition \eqref{decWn} of $W_n$. Firstly, since 
\[(\sigma_n^{(K)})^2\sim \frac{n^{2p-1}}{(p-1)!^2}\Var(g_1^{(K)}(X_1))\,,\quad K\geq K_0\,,\]
it is straightforward to check that all terms appearing in Theorem \ref{maintheo} (b) for $W_n^{(K)}$ are $o(n^{-1/2})$. Hence, from Theorem \ref{maintheo} and Slutsky's lemma we conclude that  
\[W_n^{(K)}\stackrel{n\to\infty}{\Longrightarrow} \frac{\Var(g_1^{(K)}(X_1))}{\Var(g_1(X_1))}t^{p-1}B
\stackrel{K\to\infty}{\Longrightarrow}t^{p-1}B\,,\]
where $B$ is a standard Brownian Motion. For the remainder term 
\[R_n^{(K)}(t):=\frac{J_p^{(\lfloor nt\rfloor)}(\rho^{(K)})}{\sigma_n}\]
observe that by Markov's inequality, for fixed $K\geq K_0$ and $\epsilon>0$ and with $d$ denoting the Skorohod metric as well as $r^{(K)}(x)=\E|\rho^{(K)}(x,X_2,\dotsc,X_p)|$ we have 
\begin{align*}
\P\Bigl(d\Bigl(W_n,\frac{\sigma_n^{(K)}}{\sigma_n} W_n^{(K)}\Bigr)>\epsilon\Bigr)&\leq 
 \P\bigl(\|R_n^{(K)}\|_\infty>\epsilon\bigr)
 \leq\P\bigl(\sigma_n^{-1} J_p^{(n)}(|\rho_k|)>\epsilon\bigr)
 \\
&\hspace{-2cm}\leq \frac{1}{\epsilon^2\sigma_n^2} \Var\bigl(J_p^{(n)}(|\rho_k|)\bigr)\leq \frac{ n^{2p-1}}{(p-1)!^2 \epsilon^2\sigma_n^2}\Bigl(\Var\bigl(r^{(K)}(X_1)\bigr)+O(n^{p-2})\Bigr)\,.
\end{align*}
Hence,
\begin{align*}
 \limsup_{n\to\infty}\P\Bigl(d\Bigl(W_n,\frac{\sigma_n^{(K)}}{\sigma_n} W_n^{(K)}\Bigr)>\epsilon\Bigr)&\leq 
 \frac{\Var\bigl(r^{(K)}(X_1)\bigr)}{\epsilon^2 \Var\bigl(g_1(X_1)\bigr)}\stackrel{K\to\infty}{\longrightarrow}0\,,
\end{align*}
since $r^{(K)}$ converges to $0$ in $L^2(\mu)$ by the dominated convergence theorem. Thus, \cite[Theorem 3.2]{Bil} implies that for a kernel $\psi\in L^2(\mu^p)$ that does not depend on $n$, the corresponding normalized $U$-process $W_n$ converges weakly to $t^{p-1}B(t)$ if $\Var(g_1(X_1))>0$ (for the necessity of the latter condition see also 
Remark \ref{r:donsker} below). Selecting $p=1$ in the previous discussion, allows one to recover a version of Donsker's theorem \cite[p. 90]{Bil}. If $\psi$ may depend on $n$, the situation is more complicated, since the contraction norms are no longer constant. This applies even to the case of a first Hoeffding projection that is asymptotically dominant.

\end{enumerate}
}
\end{remark}

{}

The following corollary deals with the (simpler) situation of a degenerate kernel.

\begin{corollary}[\bf Degenerate kernels]\label{maincor} Let the assumptions and notation of Section \ref{setting} prevail, define the sequence $W_n$, $n\in\N$, according to \eqref{genW}, and suppose in addition that the kernel $\psi$ is degenerate. Assume that, for all pairs $(l,r)$ of integers such that  $1\leq r\leq p$ and $0\leq l\leq r\wedge (2p-r-1)$,  
\begin{equation}\label{e:symcheck}
\lim_{n\to\infty}n^{\frac{l-r}{2}}\frac{\norm{\psi\star_r^{l}\psi}_{L^2(\mu^{2p-r-l})}}{\norm{\psi}_{L^2(\mu^p)}^2}=0\,
\end{equation}
and that, for all $0\leq l\leq p-1$ and for some $\epsilon>0$, the sequence
\begin{equation}\label{e:symcheck1}
n\mapsto n^{\frac{l-p}{2}+\epsilon}\frac{\norm{\psi\star_p^{l}\psi}_{L^2(\mu^{p-l})}}{\norm{\psi}_{L^2(\mu^p)}^2}
\end{equation}
is bounded.
Then, as $n\to \infty$,
$$
W_n\Longrightarrow \{B(t^p) : t\in [0,1]\}\, ,
$$
where $B := \{ B(t) : t\in[0,1]\}$ denotes a standard Brownian motion. 
\end{corollary}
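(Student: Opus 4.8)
The plan is to obtain Corollary \ref{maincor} as a direct specialization of Theorem \ref{maintheo2} to the case of a kernel that is degenerate for every value of the sample size. First I would record the effect of degeneracy on the Hoeffding decomposition: if $\psi=\psi^{(n)}$ is symmetric and degenerate of order $p$, then \eqref{gk}, \eqref{defpsis} together with the definition of degeneracy (and the tower property) give $g_l\equiv 0$ and $\psi_l\equiv 0$ for every $0\le l\le p-1$ --- in particular $\E[\psi(X_1,\dotsc,X_p)]=g_0=0$ --- while $g_p=\psi_p=\psi$. Substituting this into \eqref{sigma} collapses the defining sum of $\sigma_n^2$ to its single term $k=p$, so that $\sigma_n^2=\binom{n}{p}\norm{\psi}_{L^2(\mu^p)}^2$, and therefore
\[
\sigma_n^2\sim\frac{n^p}{p!}\,\norm{\psi}_{L^2(\mu^p)}^2,\qquad n\to\infty
\]
(here and below we use the standing assumption $\sigma_n>0$, equivalently $\norm{\psi^{(n)}}_{L^2(\mu^p)}>0$, which is what makes the normalization \eqref{genW} meaningful).

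The second step is to verify that, under degeneracy, the three hypotheses of Theorem \ref{maintheo2} reduce exactly to the assumptions of Corollary \ref{maincor}. For Condition (a'): since $\psi_k\equiv 0$ for $k<p$, the limit defining $b_k^2$ is trivially $0$ for $k=1,\dotsc,p-1$, while for $k=p$ the identity $\sigma_n^2=\binom{n}{p}\norm{\psi}_{L^2(\mu^p)}^2$ gives $b_p^2=\lim_{n\to\infty}n^p/\binom{n}{p}=p!$; thus (a') holds automatically. For Condition (b'): since $\psi_v\star_r^l\psi_u$ vanishes unless $v=u=p$, the only surviving requirements are those with $v=u=p$, i.e.\ for $1\le r\le p$ and $0\le l\le r\wedge(2p-r-1)$, and there the exponent $2p-\frac{u+v+r-l}{2}$ becomes $p-\frac{r-l}{2}$; using $\sigma_n^2\sim\frac{n^p}{p!}\norm{\psi}_{L^2(\mu^p)}^2$, the quantity in (b') is therefore asymptotically a constant multiple of $n^{\frac{l-r}{2}}\norm{\psi\star_r^l\psi}_{L^2(\mu^{2p-r-l})}/\norm{\psi}_{L^2(\mu^p)}^2$, so (b') is precisely \eqref{e:symcheck}. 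For Condition (c'): again only $r=p$ contributes (because $\psi_r\equiv 0$ for $r<p$), and for $0\le l\le p-1$ the same substitution shows that the sequence in (c') is a constant multiple of $n^{\frac{l-p}{2}+\epsilon}\norm{\psi\star_p^l\psi}_{L^2(\mu^{p-l})}/\norm{\psi}_{L^2(\mu^p)}^2$, so (c') is precisely the boundedness assumption \eqref{e:symcheck1}.

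Having checked all hypotheses of Theorem \ref{maintheo2}, I would invoke it to conclude $W_n\Longrightarrow Z$, where $Z$ is the Gaussian process of \eqref{e:limsum} with $\alpha_{k,p}^2=b_k^2/\bigl(k!(p-k)!^2\bigr)$. Plugging in the values found above, $\alpha_{k,p}=0$ for $1\le k\le p-1$ and $\alpha_{p,p}^2=p!/\bigl(p!\,0!^2\bigr)=1$, so by \eqref{e:limpro} the limit collapses to $Z(t)=Z_{p,p}(t)=t^{p-p}B_p(t^p)=B_p(t^p)$, which is the process $\{B(t^p):t\in[0,1]\}$ in the statement; consistently, \eqref{e:limcov2} yields the covariance $(s\wedge t)^p=s^p\wedge t^p$.

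Since the corollary is essentially a specialization, there is no genuine analytic difficulty; the only point requiring care is the index bookkeeping --- checking that, once all contractions involving $\psi_k$ with $k<p$ are discarded, the surviving ranges of $(l,r)$ and the exponents of $n$ in Conditions (b')--(c') of Theorem \ref{maintheo2} coincide exactly with those in \eqref{e:symcheck}--\eqref{e:symcheck1}, and tracking how the replacement $\sigma_n^2\sim\frac{n^p}{p!}\norm{\psi}_{L^2(\mu^p)}^2$ shifts the normalizing powers of $n$.
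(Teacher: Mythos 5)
Your proposal is correct and takes essentially the same route as the paper, which simply invokes Theorem \ref{maintheo} (in the $g_k$ formulation) rather than Theorem \ref{maintheo2}; since $g_k=\psi_k=0$ for $k<p$ and $g_p=\psi_p=\psi$ under degeneracy, the two theorems specialize to identical conditions, and your bookkeeping of the exponents and of the surviving $(v,u,r,l)$ ranges matches the paper's.
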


\begin{remark}\label{r:donsker}{\rm 

%

If $p\geq2$ and the degenerate kernel $\psi$ does not depend on $n$, then condition \eqref{e:symcheck} is not satisfied for $l=r=1$, since 
\begin{align*}
 \lim_{n\to\infty}n^{\frac{l-r}{2}}\frac{\norm{\psi\star_r^{l}\psi}_{L^2(\mu^{2p-r-l})}}{\norm{\psi}_{L^2(\mu^p)}^2}=\frac{\norm{\psi\star_1^{1}\psi}_{L^2(\mu^{2p-2})}}{\norm{\psi}_{L^2(\mu^p)}^2} >0.
\end{align*}
This phenomenon is consistent with the known non-Gaussian fluctuations of degenerate $U$-processes of orders $p\geq2$ having a kernel $\psi$ independent of the sample size --- see \cite{Neu, H79, DM, MT}. 


}
\end{remark}

\subsection{Connection to changepoint analysis}\label{ss:cgpt}

The techniques developed in the present paper can be used to characterize the weak convergence of families of processes that are more general than the ones defined in \eqref{genW} and, in particular, to deal with limit theorems related to {\bf changepoint analysis} (see e.g. \cite{CH88, Ferger94, GH95,  CH_book, Ferger01, Gombay2004, HR_survey, RW19}). In order to illustrate such a connection, we will show how to suitably adapt our results in order to generalise an influential invariance principle for order 2 symmetric $U$-statistics, originally proved by Cs\"org\"o and Horvath in \cite{CH88}. As explained e.g. in \cite{CH_book, Gombay2004, RW19} such an invariance principle has been the starting point of a fruitful line of research, focussing on changepoint testing procedures based on generalisations of Wilcoxon-Mann-Whitney statistics. Further possible extensions of the results of this section, involving in particular antisymmetric kernels \cite{CH88, GH95, CH_book, Ferger01} and rescaled $U$-processes \cite{CH88, Ferger94, RW19}, are outside the scope of the present paper and will be investigated elsewhere.

{} 

As before, we start by fixing a sequence of i.i.d. random variables $X_1, X_2,...$ with values in $(E, \mathcal{E})$, and with common distribution $\mu$. We also consider a sequence of kernels $\{\psi^{(n)} : n\geq 1\}$, such that each mapping $\psi^{(n)} : E^2 \to \R$ is symmetric and square-integrable with respect to $\mu^2$. For applications, $\psi^{(n)}$ is typically chosen in such a way that the quantity $\psi^{(n)} (x,y)$ is small whenever $x,y$ are close, e.g. $\psi^{(n)}(x,y) = \|x-y\|^{\beta}$, $\beta>0$ (assuming $E$ is a normed space), but such a property has no impact on the convergence results discussed below. Here, to simplify the notation we assume from the start that each $\psi^{(n)}$ is centered, that is, $\E[\psi^{(n)}(X_1, X_2)] = 0$ for every $n$. We are interested in the family of $U$-processes $\{Y_n : n\geq 1\}$ given by
$$
Y_n(t) := \sum_{1\leq i\leq \lfloor nt \rfloor < j \leq n} \psi^{(n)} (X_i, X_j), \quad t\in [0,1].
$$
Defining $\psi^{(n)}_u$, $u=1,2$, according to \eqref{defpsis} and writing $\gamma_1(n) := \|\psi_1^{(n)}\|_{L^2(\mu)}$ and $\gamma_2(n) := \|\psi_2^{(n)}\|_{L^2(\mu^2)}$, one deduces immediately that, for $0\leq s\leq t\leq 1$
\begin{eqnarray}\label{e:covy}
&&\Cov(Y_n(s), Y_n(t)) \\ \notag
&& =  \gamma_2(n)^2 \lfloor ns \rfloor ( n - \lfloor nt \rfloor +1) +\gamma_1(n)^2\Big\{  \lfloor ns \rfloor ( n - \lfloor nt \rfloor +1)( n - \lfloor ns \rfloor +1)  \\ \notag
&& \quad\quad\quad\quad\quad\quad+\lfloor ns \rfloor ( n - \lfloor nt \rfloor + 1)( \lfloor nt \rfloor - \lfloor ns \rfloor +1)+  \lfloor ns \rfloor \lfloor nt \rfloor ( n - \lfloor nt \rfloor + 1)   \Big\}.
\end{eqnarray}
We also set $\gamma_n^2 := \Var(Y_n(1/2))$ \footnote{The choice of $t=1/2$ is arbitrary; one could set $\gamma_n^2 := \Var(Y_n( a ))$ any number $a\in (0,1)$ without changing the substance of the subsequent results.}, and $\widetilde{Y}_n := Y_n/\gamma_n$. The next statement corresponds to one of the main findings in \cite{CH88}.

\begin{theorem}[Cs\"org\"o and Horvath \cite{CH88}] \label{t:ch} Under the above assumptions, assume that $ \psi^{(n)} \equiv \psi$ does not depend on $n$, and write $\gamma_1 \equiv \gamma_1(n)$, $n\geq 1$. Then, as $n\to \infty$, one has that $\gamma_n \sim \gamma_1 n^3/4$ and moreover $\widetilde{Y}_n \Longrightarrow 2 A$, where $A = \{A(t) : t\in [0,1]\}$ is defined as
$$
A(t) := (1-t)B(t) +t(B(1) -B(t) ), \quad t\in [0,1], 
$$
with $B$ a standard Brownian motion. 

\end{theorem}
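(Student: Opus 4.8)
The plan is to view $\widetilde{Y}_n$ through its Hoeffding decomposition and apply the finite-dimensional convergence part of Theorem \ref{maintheo} together with the tightness criterion of Lemma \ref{lemma_tightness}, treating the process $Y_n$ as a ``two-sided'' analogue of the sequential $U$-processes already studied. Concretely, since $\psi \equiv \psi_2$ is centered with a first Hoeffding kernel $\psi_1$ of order one, one writes $\psi(X_i,X_j) = \psi_1(X_i) + \psi_1(X_j) + \psi_2(X_i,X_j)$, so that
\begin{align*}
Y_n(t) &= (n-\lfloor nt\rfloor)\sum_{i\leq \lfloor nt\rfloor}\psi_1(X_i) + \lfloor nt\rfloor \sum_{j>\lfloor nt\rfloor}\psi_1(X_j) + \sum_{i\leq \lfloor nt\rfloor < j}\psi_2(X_i,X_j).
\end{align*}
The dominant contribution, after dividing by $\gamma_n$, is the linear part; the degenerate order-two part contributes a lower-order term. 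First I would establish $\gamma_n \sim \gamma_1 n^3/4$ directly from \eqref{e:covy} by plugging in $s=t=1/2$ and keeping the leading power of $n$ (the $\gamma_2^2$ term is $O(n^2)$ and negligible against the $\gamma_1^2 n^3$ term). Then, for fixed $0\leq t_1<\dots<t_k\leq 1$, I would prove a multivariate CLT for the vector formed by the linear parts using the classical Lindeberg CLT (or the multidimensional results quoted from \cite{dobler_peccati}), and separately show via a second-moment estimate that $\gamma_n^{-1}\sum_{i\leq \lfloor nt\rfloor<j}\psi_2(X_i,X_j)\to 0$ in $L^2$ uniformly in $t$; one computes the variance of this term as $O(\gamma_2^2 n^2) = o(\gamma_n^2)$. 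The covariance structure of the Gaussian limit of the linear part is read off from \eqref{e:covy}: the leading-order term is $\gamma_1^2 n^3\big[ s(1-t) + s(1-t)(t-s) + st(1-t)\big] + \text{lower order}$, and one checks that $s(1-t)(1 + (t-s) + t) = s(1-t)(2-s+t-t+s)$... more carefully, collecting the three bracketed terms at leading order gives $\gamma_1^2 n^3\, s(1-t)\big[(1-s)+(t-s)+t\big]$ evaluated against the normalization $\gamma_n^2 \sim \gamma_1^2 n^3/16$; this must be shown to equal $4$ times the covariance $\E[A(s)A(t)] = s(1-t)$ of the Gaussian process $A(t) = (1-t)B(t) + t(B(1)-B(t))$, which has covariance $\E[A(s)A(t)] = (1-t)(1-s)s + t s(1-t) + \text{cross terms} = s(1-t)$ for $s\leq t$ after expanding. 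I would verify this identity by a direct expansion, which pins down the factor $2$ in the limit $2A$.

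For tightness, I would apply Lemma \ref{lemma_tightness} with $\beta = 4$. The key estimate is a bound of the form $\E|\widetilde Y_n(t) - \widetilde Y_n(s)|^4 \lesssim \big((\lfloor nt\rfloor - \lfloor ns\rfloor)/n\big)^2$. Expanding $Y_n(t) - Y_n(s)$ along the decomposition above, the increment splits into (i) a sum over the ``new'' indices $\lfloor ns\rfloor < i \leq \lfloor nt\rfloor$ paired against the block to their right, (ii) a correction from the changing multiplicities $(n-\lfloor nt\rfloor)$ versus $(n-\lfloor ns\rfloor)$ in the linear part, and (iii) the analogous terms for $\psi_2$. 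Each piece is a (possibly incomplete, possibly degenerate) $U$-statistic of order $\leq 2$ whose number of ``free'' summation indices controls its $L^4$ norm; the moment bounds on degenerate $U$-statistics of Ibragimov and Sharakhmetov quoted in the introduction, or a direct fourth-moment computation, give the required power of $(\lfloor nt\rfloor - \lfloor ns\rfloor)/n$. One should be slightly careful that terms of the form $(\lfloor nt\rfloor - \lfloor ns\rfloor)\sum_{i\leq \lfloor ns\rfloor}\psi_1(X_i)$ carry a prefactor linear in the increment and a sum of order $n^{1/2}$, so after normalization by $\gamma_n \sim n^{3/2}$ they contribute $\big((\lfloor nt\rfloor-\lfloor ns\rfloor)/n\big)\cdot n^{-1}\cdot n^{1/2}$-type magnitudes; raising to the fourth power and using $\big((\lfloor nt\rfloor - \lfloor ns\rfloor)/n\big)^4 \leq \big((\lfloor nt\rfloor - \lfloor ns\rfloor)/n\big)^2$ recovers the exponent $1+\alpha$ with $\alpha = 1$.

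I expect the main obstacle to be the bookkeeping in the tightness bound rather than any conceptual difficulty: because $Y_n$ is not a genuine sequential $U$-process in the sense of \eqref{e:sus} but a ``bilinear'' object with the index set split at $\lfloor nt\rfloor$, the increment $Y_n(t) - Y_n(s)$ has more cross-terms than in the one-sided case (in particular, moving the split point changes the pairing for indices on \emph{both} sides), and each must be checked to scale like $((\lfloor nt\rfloor - \lfloor ns\rfloor)/n)^{1+\alpha}$ after normalization. A clean way to organize this is to first reduce, via Slutsky's lemma, to the linear (order-one) part — for which the increments are sums of independent terms and the bound is elementary — and then handle the $\psi_2$ remainder with a crude variance bound showing it is asymptotically negligible as a process in $D[0,1]$, e.g. by a maximal inequality. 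Once finite-dimensional convergence and tightness are in hand, $\widetilde Y_n \Longrightarrow 2A$ follows from Lemma \ref{lemma_tightness}, and the continuity of the paths of $A$ is automatic since $A$ is a Gaussian process with the continuous covariance $s(1-t)$.
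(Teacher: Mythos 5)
Your strategy --- explicit Hoeffding decomposition of the split-index $U$-process, a Lindeberg CLT for the dominant linear part, negligibility of the degenerate remainder via a maximal inequality, and fourth-moment tightness --- is sound in outline and genuinely different from the paper's route. The paper (Section \ref{ss:dede}, proving the more general Theorem \ref{t:maincp}) instead writes $Y_n(t) = U_n(1) - U_n(t) - I_n(t)$, where $U_n(t) = \sum_{1\leq i<j\leq\lfloor nt\rfloor}\psi(X_i,X_j)$ and $I_n(t) = \sum_{\lfloor nt\rfloor<i<j\leq n}\psi(X_i,X_j)$ is the reversed-index analogue; this expresses the split-index process as a combination of two genuine sequential $U$-processes, so that tightness follows by applying Theorem \ref{gentightness} twice (once to $U_n$ and once to $I_n$, the latter read in reversed time via the relabelling $\pi_n(i)=n-i+1$), and f.d.d. convergence follows from Remark \ref{r:lolo}. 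The paper's reduction sidesteps the cross-term bookkeeping you flag as the main obstacle, at the cost of invoking the general machinery; your route is more self-contained but carries that bookkeeping burden.

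Two computational claims in your sketch are, however, wrong and, followed as written, would produce a false mismatch in the covariance check. First, $\gamma_n^2 = \Var(Y_n(1/2)) \sim \gamma_1^2 n^3/4$, not $\gamma_1^2 n^3/16$: putting $s=t=1/2$ in \eqref{e:covy}, the leading contribution is $\gamma_1^2\bigl[(n/2)(n/2)^2 + (n/2)^2(n/2)\bigr] = \gamma_1^2 n^3/4$, the middle summand in the curly bracket being $O(n^2)$ at $s=t$. Second, $A$ is \emph{not} a Brownian bridge: for $s\leq t$ one has $\E\bigl[(B(1)-B(s))B(t)\bigr] = t-s \neq 0$, so the ``cross terms'' you dismissed do contribute and
\[
\E[A(s)A(t)] = s(1-t)\bigl[(1-s)+(t-s)+t\bigr] = s(1-t)\bigl(1+2(t-s)\bigr),\qquad s\leq t,
\]
rather than $s(1-t)$. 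With these corrected values the identity you set out to verify does hold: $\Cov\bigl(\widetilde Y_n(s),\widetilde Y_n(t)\bigr) \to 4\,s(1-t)\bigl(1+2(t-s)\bigr) = \Cov\bigl(2A(s),2A(t)\bigr)$, confirming $\widetilde Y_n\Longrightarrow 2A$. So the conclusion survives, but you must redo the expansion before relying on it; as stated, neither the normalization constant nor the covariance of $A$ is correct, and they do not cancel.
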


The following statement (containing Theorem \ref{t:ch} as a special case) shows that, by allowing the kernels $\psi^{(n)}$ to explicitly depend on $n$, one can obtain a larger class of functional limit theorems. We recall that a centered continuous Gaussian process $b = \{b(t) : t\in [0,1]\}$ is called a (standard) {\bf Brownian bridge} if $\E[b(t)b(s)] = s\wedge t - st$, for all $s,t\in [0,1]$.
{
\begin{theorem}\label{t:maincp} Let the above assumptions and notation prevail, and assume moreover that: {\rm (I)} the kernels $\psi_1^{(n)}, \psi_2^{(n)}$ verify the asymptotic relations expressed in Conditions {\rm (b')} and {\rm (c')} of Theorem \ref{maintheo2} for $p=2$, and {\rm (II)} as $n\to \infty$,
$$
n^{4-i}\, \frac{\gamma_n(i)^2}{\gamma_n^2} \longrightarrow c^2_i\in [0,\infty), \quad i=1,2.
$$
Then, $\gamma_n^2 \sim (c_1^2 +c_2^2)^{-1} (n^3 \gamma_n(1)^2 + n^2\gamma_n(2)^2)$, and moreover $\widetilde{Y}_n \Longrightarrow c_1\, A + c_2 \, b$, where $b$ is a Brownian bridge independent of $A$.
\end{theorem}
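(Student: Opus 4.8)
The plan is to split $Y_n$, through the Hoeffding decomposition of $\psi^{(n)}$, into a first–order (``linear'') summand and a completely degenerate second–order summand, and to treat the two pieces separately: the linear piece reduces to a classical functional invariance principle for a triangular array of partial sums, while the degenerate piece is handled by the multidimensional de Jong CLT of D\"obler and Peccati (2019) for the finite–dimensional distributions together with the tightness criterion of Lemma~\ref{lemma_tightness}; the two are then recombined by Slutsky's lemma. Concretely, inserting $\psi^{(n)}(x,y)=\psi_1^{(n)}(x)+\psi_1^{(n)}(y)+\psi_2^{(n)}(x,y)$ (recall $\E[\psi^{(n)}]=0$, so $g_0\equiv0$) into the definition of $Y_n$ and grouping terms gives $Y_n=L_n+Q_n$, where, with $S_k:=\sum_{i=1}^k\psi_1^{(n)}(X_i)$,
\[
L_n(t)=(n-2\lfloor nt\rfloor)\,S_{\lfloor nt\rfloor}+\lfloor nt\rfloor\,S_n,\qquad
Q_n(t)=\sum_{1\le i\le\lfloor nt\rfloor<j\le n}\psi_2^{(n)}(X_i,X_j).
\]
Since $\psi_2^{(n)}$ is canonical, $L_n(\cdot)$ and $Q_n(\cdot)$ are uncorrelated at all pairs of times, and the $\gamma_n(1)^2$– and $\gamma_n(2)^2$–blocks in \eqref{e:covy} are exactly the covariances of $L_n$ and of $Q_n$. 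Evaluating \eqref{e:covy} at $s=t=1/2$ and keeping leading orders gives $\gamma_n^2\sim\tfrac14\bigl(n^3\gamma_n(1)^2+n^2\gamma_n(2)^2\bigr)$; combined with Condition~(II) this forces $c_1^2+c_2^2=4$ and yields the announced $\gamma_n^2\sim(c_1^2+c_2^2)^{-1}\bigl(n^3\gamma_n(1)^2+n^2\gamma_n(2)^2\bigr)$. Also, by \eqref{sigma} with $p=2$, $\sigma_n^2:=\Var\bigl(J_2^{(n)}(\psi^{(n)})\bigr)\asymp n^3\gamma_n(1)^2+n^2\gamma_n(2)^2\asymp\gamma_n^2$, so Condition~(I) may equivalently be read with $\gamma_n^2$ in place of $\sigma_n^2$.

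For the linear part, the boundedness of $n^{5/2+\epsilon}\sigma_n^{-2}\|\psi_1^{(n)}\star_1^0\psi_1^{(n)}\|_{L^2(\mu)}=n^{5/2+\epsilon}\sigma_n^{-2}\|\psi_1^{(n)}\|_{L^4(\mu)}^2$ in Condition~(c'), together with $\sigma_n^2\gtrsim n^3\gamma_n(1)^2$, gives the Lyapunov bound $\|\psi_1^{(n)}\|_{L^4(\mu)}^4=o\bigl(n\,\gamma_n(1)^4\bigr)$, whence the functional invariance principle for the row-wise i.i.d.\ array $\{\psi_1^{(n)}(X_i)\}$ gives $S_{\lfloor n\cdot\rfloor}/(\sqrt n\,\gamma_n(1))\Longrightarrow B$ in $D[0,1]$. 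As $L_n(t)/(n^{3/2}\gamma_n(1))=\bigl(1-2\tfrac{\lfloor nt\rfloor}{n}\bigr)\tfrac{S_{\lfloor nt\rfloor}}{\sqrt n\,\gamma_n(1)}+\tfrac{\lfloor nt\rfloor}{n}\tfrac{S_n}{\sqrt n\,\gamma_n(1)}$ and the map $f\mapsto\bigl((1-2t)f(t)+tf(1)\bigr)_{t\in[0,1]}$ is continuous on $C[0,1]$ (with the discrete prefactors converging uniformly to the continuous ones), the continuous mapping theorem yields $L_n/(n^{3/2}\gamma_n(1))\Longrightarrow A$ in $D[0,1]$, using $(1-2t)B(t)+tB(1)=(1-t)B(t)+t(B(1)-B(t))=A(t)$.

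For the degenerate part, the identity $Q_n(t_r)=J_2^{(n)}(\psi_2^{(n)})-J_2^{(\lfloor nt_r\rfloor)}(\psi_2^{(n)})-\sum_{\lfloor nt_r\rfloor<i<j\le n}\psi_2^{(n)}(X_i,X_j)$ exhibits $(Q_n(t_1),\dots,Q_n(t_m))$ as a fixed linear combination of degenerate symmetric $U$–statistics of order $2$, all with kernel $\psi_2^{(n)}$ but based on several (nested or complementary) index sets inside $\{1,\dots,n\}$; the joint cumulants of such a family are computed through the multiplication formulae for degenerate $U$–statistics with different sample sizes proved in this paper, and the multidimensional de Jong CLT of D\"obler and Peccati (2019) applies --- its hypotheses reducing, via Lemma~\ref{contrlemma}, to Condition~(b') of Theorem~\ref{maintheo2} for $p=2$ --- while the limiting covariances, read off from the $\gamma_n(2)^2$–block of \eqref{e:covy}, equal $s\wedge t-st$ after normalisation by $n\,\gamma_n(2)$, so the finite–dimensional distributions of $Q_n/(n\gamma_n(2))$ converge to those of $b$. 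For tightness we use Lemma~\ref{lemma_tightness} with $\beta=4$: writing $Q_n(t)-Q_n(s)=\sum_{\lfloor ns\rfloor<i\le\lfloor nt\rfloor<j\le n}\psi_2^{(n)}(X_i,X_j)-\sum_{i\le\lfloor ns\rfloor,\ \lfloor ns\rfloor<j\le\lfloor nt\rfloor}\psi_2^{(n)}(X_i,X_j)$, a difference of two degenerate bilinear forms each involving the block of size $d:=\lfloor nt\rfloor-\lfloor ns\rfloor$, one bounds $\E\bigl[(Q_n(t)-Q_n(s))^4\bigr]$ by the moment inequalities of Ibragimov and Sharakhmetov (2002) and Lemma~\ref{contrlemma}; every resulting term carries at least one factor $d$ against a power of $n$, and using $\gamma_n^2\asymp\sigma_n^2$ with Conditions~(b') and (c') one gets $\E\bigl[(Q_n(t)-Q_n(s))^4\bigr]\lesssim\gamma_n^4\bigl((\lfloor nt\rfloor-\lfloor ns\rfloor)/n\bigr)^{1+\alpha}$ for $\alpha:=\min(2\epsilon,\tfrac12)>0$. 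An analogous (easier) bound holds for $L_n(t)-L_n(s)=-d\,S_{\lfloor ns\rfloor}+(n-\lfloor nt\rfloor-\lfloor ns\rfloor)\,S_M+d\,S_H$ (with $S_M,S_H$ the partial sums of $\psi_1^{(n)}(X_i)$ over the blocks $\{\lfloor ns\rfloor+1,\dots,\lfloor nt\rfloor\}$ and $\{\lfloor nt\rfloor+1,\dots,n\}$), so $\E\bigl[(Y_n(t)-Y_n(s))^4\bigr]\lesssim\gamma_n^4\bigl((\lfloor nt\rfloor-\lfloor ns\rfloor)/n\bigr)^{1+\alpha}$.

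To conclude, apply the multidimensional de Jong CLT to the joint vector (linear $U$–statistics $\oplus$ second–order $U$–statistics): since $\psi_2^{(n)}$ is canonical, all cross–covariances between a linear coordinate and a degenerate coordinate vanish, so the limiting covariance is block–diagonal and the limits $A$ and $b$ found above are independent; hence $\bigl(L_n/(n^{3/2}\gamma_n(1)),\,Q_n/(n\gamma_n(2))\bigr)$ converges in finite–dimensional distributions to $(A,b)$ with $A\perp b$. By Condition~(II), $n^{3/2}\gamma_n(1)/\gamma_n\to c_1$ and $n\gamma_n(2)/\gamma_n\to c_2$, so Slutsky's lemma gives that the finite–dimensional distributions of $\widetilde Y_n=Y_n/\gamma_n$ converge to those of $c_1A+c_2b$, whose covariance $c_1^2\,s(1-t)\bigl(1+2(t-s)\bigr)+c_2^2\,(s\wedge t-st)$ (for $0\le s\le t\le1$) matches the normalised limit of \eqref{e:covy}; terms with $\gamma_n(i)=0$ are identically zero and are discarded. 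Together with the fourth–moment estimate of the previous paragraph and Lemma~\ref{lemma_tightness}, this gives $\widetilde Y_n\Longrightarrow c_1A+c_2b$ in $D[0,1]$. The main obstacle is the tightness estimate for the degenerate part: one must expand the fourth moment of its increment, identify every contraction kernel of $\psi_2^{(n)}$ (and, for the linear part, every power of $\psi_1^{(n)}$) that appears, and track the combinatorial factors $d$, $\lfloor ns\rfloor$, $n-\lfloor nt\rfloor$ precisely enough to extract the required power $\bigl((\lfloor nt\rfloor-\lfloor ns\rfloor)/n\bigr)^{1+\alpha}$; in particular the ``diagonal'' contributions of order $d\,(n-\lfloor nt\rfloor)\,\|\psi_2^{(n)}\|_{L^4(\mu^2)}^4$ are what prevent taking $\alpha=1$ and force $\alpha\le 2\epsilon$, which is precisely where the boundedness requirements of Condition~(c') --- rather than mere convergence to zero --- are used.
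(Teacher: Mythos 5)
Your proposal is correct in substance but follows a genuinely different route from the paper. You Hoeffding--decompose $Y_n=L_n+Q_n$ at the level of the kernel, identify the $A$--part from $L_n$ via Donsker applied to the triangular array $\{\psi_1^{(n)}(X_i)\}$ plus the continuous mapping $f\mapsto (1-2t)f(t)+tf(1)$, handle $Q_n$ via the multidimensional degenerate CLT (using Remark~\ref{r:lolo} / Proposition~\ref{fidiprop} to accommodate the non-sequential index sets) and obtain tightness by expanding fourth moments of increments with the Ibragimov--Sharakhmetov inequality. The paper instead avoids the Hoeffding splitting of $Y_n$ altogether: it writes $Y_n(t)=U_n(1)-U_n(t)-I_n(t)$, where $U_n$ is the forward sequential $U$-process and $I_n(t)=\sum_{\lfloor nt\rfloor<i<j\le n}\psi^{(n)}(X_i,X_j)$ is its ``time-reversed'' counterpart, so that tightness of $\widetilde Y_n$ reduces immediately to two applications of the already-proved Theorem~\ref{gentightness} and no new fourth-moment estimates are needed, while f.d.d.~convergence again comes from Remark~\ref{r:lolo}. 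Your route is longer but arguably more transparent about the origin of the two limit components (the bridge $b$ emerging precisely from the completely degenerate piece, the process $A$ from the linear piece); the paper's route is shorter and fully recycles the general machinery.

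Two small points where your write-up is under-argued. First, the increment bound $\E\lvert Q_n(t)-Q_n(s)\rvert^4\lesssim\gamma_n^4\bigl((\lfloor nt\rfloor-\lfloor ns\rfloor)/n\bigr)^{1+\alpha}$ is stated without carrying out the combinatorics; it does hold (one indeed extracts a factor $d:=\lfloor nt\rfloor-\lfloor ns\rfloor$ from each bilinear block, and the contraction/diagonal terms are controlled by Condition~(c') together with $\gamma_n^2\asymp\sigma_n^2$, yielding any $\alpha\le\min(2\epsilon,1)$), but this is exactly the computation the paper is able to skip by re-using Theorem~\ref{gentightness}. Second, you invoke the Donsker/Lyapunov argument for $L_n/(n^{3/2}\gamma_n(1))$ and then multiply by $n^{3/2}\gamma_n(1)/\gamma_n\to c_1$; when $\gamma_n(1)>0$ for all $n$ but $c_1=0$ the Lyapunov condition you extract from (c') may fail, yet the conclusion still holds because then $\Var\bigl(L_n(t)/\gamma_n\bigr)\to0$ uniformly and $L_n/\gamma_n\Rightarrow0$ directly --- a one-line remark that should be added rather than the vaguer ``terms with $\gamma_n(i)=0$ are discarded''.
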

}
Theorem \ref{t:maincp} is proved in Section \ref{ss:dede}. An alternate class of FCLTs displaying Brownian bridges as limits can be found in \cite{GH95}. In the forthcoming Corollary \ref{c:cprg}, we will present a direct application of Theorem \ref{t:maincp} to edge counting in random geometric graphs.

\subsection{Extension to vectors of $U$-processes}\label{mult}
In this subsection we state multivariate extensions of Theorems \ref{maintheo} and \ref{maintheo2}. We first introduce the setup and some notation. Fix a dimension $d\geq 1$ and, for $1\leq i\leq d$, let $p_i$ be a positive integer and suppose that 
$\psi(i)=\psi^{(n)}(i)\in L^2(\mu^{p_i})$ is a symmetric kernel (that may again depend on the sample size $n$). Define the corresponding kernels
$g_k(i)$ and $\psi_s(i)$ (which may also depend on $n$) for all $0\leq k\leq p_i$ and $1\leq s\leq p_i$ in the obvious way. 
W.l.o.g. we may assume that $1\leq p_1\leq p_2\leq\ldots\leq p_d$. 
Moreover, for $1\leq i\leq d$, $t\in[0,1]$ and $n\geq  p_d$, let $U_i(t):=J_{p_i}^{(\floor{nt})}(\psi(i))$, 
\begin{equation*}
W_i(t):=W^{(n)}_i(t):=\frac{U_i(t)-\E[U_i(t)]}{\sqrt{\Var(U_i(1))}} 
\end{equation*}
and $W_i:=(W_i(t))_{t\in[0,1]}$. Then, with obvious notation we have that
\[W:=W^{(n)}:=(W_1,\dotsc,W_d)^T\in D\bigl([0,1];\R^d\bigr)\,.\]
In this section, the vector-valued Gaussian limiting processes $Z=(Z_1,\dotsc,Z_d)^T$ will have zero mean, and a covariance structure that is given by 
\begin{align}\label{limformmult}
\Cov\bigl(Z_i(s),Z_j(t)\bigr)&=\sum_{k=1}^{p_i\wedge p_j}\alpha_k(i,j) (s\wedge t)^k s^{p_i-k} t^{p_j-k}\,,
\end{align} 
where $1\leq i,j\leq d$, $s,t\in[0,1]$ and the $\alpha_k(i,j)$, $1\leq k\leq p_i\wedge p_j$, are real numbers such that $\alpha_k(i,i)\geq 0$. 
Specializing \eqref{limformmult} to the case $i=j$, $i=1,...,d$, one sees immediately that each process $Z_i$ has the form \eqref{e:limsum}, for $p=p_i$ and $\alpha_{k,p}^2 = \alpha_k(i,i)$, thus implying in particular that $Z$ takes a.s. values in $C([0,1];\R^d)$. For $1\leq i\leq d$, we further write 
\[\sigma_n^2(i):=\Var(U_i(1))\,.\]

The next two statements are multidimensional counterparts to Theorem \ref{maintheo} and Theorem \ref{maintheo2}. 
\begin{theorem}[\bf Functional convergence of vectors of general $U$-statistics, I]\label{mult1}
Let the assumptions and notation of this subsection prevail,  and assume that the following three conditions 
 are satisfied: 
\begin{enumerate}[{\normalfont (a)}]
 \item For all $1\leq i\leq j\leq d$ and for all $1\leq k\leq p_i\wedge p_j$, the real limit 
\begin{align*}
b_k(i,j)&:=\lim_{n\to\infty}\frac{n^{p_i+p_j-k}}{\sigma_n(i)\sigma_n(j)}
\Bigl(\langle g_{k}(i),g_k(j)\rangle_{L^2(\mu^{k})}\\
&\hspace{2cm}-\E\bigl[\psi(i)(X_1,\dotsc,X_{p_i})\bigr]\cdot\E\bigl[\psi(j)(X_1,\dotsc,X_{p_j})\bigr]\Bigr)
\end{align*}
 exists; 
 \item for all $1\leq i\leq k\leq d$, all $1\leq v\leq p_i$, all $1\leq u\leq p_k$ and all pairs $(l,r)$ and quadruples $(j,m,a,b)$ of integers such that 
$1\leq r\leq v\wedge u$, $0\leq l\leq r\wedge(u+v-r-1)$ and $(j,m,a,b)\in Q(v,u,r,l)$
 one has that
 \[\lim_{n\to\infty}\frac{n^{p_i+p_k-\frac{u+v+r-l}{2}}}{\sigma_n(i)\sigma_n(k)}\,\norm{g_{j}(i)\star_a^{b}g_{m}(k)}_{L^2(\mu^{j+m-a-b})}=0\,;\]
 \item there exists some $\epsilon>0$, such that, for all $1\leq i\leq d$, all $1\leq r\leq p_i$, all $0\leq l\leq r-1$ and all quadruples $(j,m,a,b)\in Q(r,r,r,l)$, the sequence
 \[\frac{n^{2p_i-r-\frac{r-l}{2}+\epsilon}}{\sigma_n(i)^2} \,\norm{g_{j}(i)\star_a^{b}g_{m}(i)}_{L^2(\mu^{j+m-a-b})}\]
 is bounded.
\end{enumerate}
Then, as $n\to\infty$, one has that $W^{(n)} \Longrightarrow Z$, where $Z = \{Z(t) : t\in [0,1]\}$ is the centered, vector-valued Gaussian process defined by the covariance structure \eqref{limformmult} and where, for $1\leq i,j\leq d$, we have
$$
\alpha_k(i,j) = \frac{b_k(i,j)}{k!(p_i-k)!(p_j-k)!}\,, \quad 1\leq k\leq p_i\wedge p_j. 
$$
%
%

\end{theorem}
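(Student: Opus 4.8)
\textbf{Proof strategy for Theorem \ref{mult1}.} The plan is to reduce the multivariate functional statement to a combination of (i) the Cram\'er--Wold device applied at the level of finite-dimensional distributions and (ii) the tightness criterion of Lemma \ref{lemma_tightness} applied coordinatewise. For the convergence of finite-dimensional distributions, I would fix a finite collection of times $0\leq t_1<\dotsc<t_N\leq 1$ and consider the $(dN)$-dimensional random vector $(W_i(t_a))_{1\leq i\leq d,\,1\leq a\leq N}$. Each coordinate is, via the Hoeffding decomposition \eqref{e:ut}, a linear combination of normalized degenerate $U$-statistics $V_k(i)(t_a) = J_k^{(\floor{nt_a})}(\phi^{(n,k)}(i))/(\text{appropriate norm})$ of orders $k=1,\dotsc,p_i$ built on a common i.i.d.\ sample. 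The key analytic input is the multidimensional CLT of D\"obler and Peccati (2019) cited in the introduction, which gives joint asymptotic normality of a vector of degenerate $U$-statistics (of possibly different orders and based on nested samples) once the relevant fourth-cumulant-type quantities — precisely the contraction norms $\norm{g_j(i)\star_a^b g_m(k)}_{L^2}$, or equivalently the $\norm{\psi_v(i)\star_r^l \psi_u(k)}_{L^2}$ — vanish at the prescribed rates. Conditions (b) and (c) are tailored so that all the ``off-diagonal'' contractions that could contaminate the limit go to zero, while conditions (a), (b) together pin down the limiting covariances.

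The core computation in the finite-dimensional step is to identify the limiting covariance. For a single coordinate $i$ and two times $s\leq t$, the covariance of $W_i(s)$ and $W_i(t)$ decomposes across Hoeffding orders $k$, and the deterministic prefactors $\binom{\floor{ns}-k}{p_i-k}\binom{\floor{nt}-k}{p_i-k}\binom{\floor{ns}\wedge\floor{nt}}{k}$ behave asymptotically like $\frac{1}{k!(p_i-k)!^2}\,n^{2p_i-k}\,(s\wedge t)^k s^{p_i-k} t^{p_i-k}$ (here $s\wedge t = s$); multiplying by $n^{2p_i-k}/\sigma_n(i)^2 \cdot \Var(g_k(X_1,\dotsc,X_k))$ and invoking condition (a) produces exactly the covariance \eqref{limformmult} with $\alpha_k(i,i) = b_k(i,i)/(k!(p_i-k)!^2)$. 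The cross-coordinate case $i\neq j$ is the same computation with $\langle g_k(i),g_k(j)\rangle$ in place of the variance and gives $\alpha_k(i,j) = b_k(i,j)/(k!(p_i-k)!(p_j-k)!)$; one should note the min $p_i\wedge p_j$ appears because only Hoeffding orders $k$ present in both kernels contribute. Since joint Gaussianity of the limit (already established) plus convergence of all covariances determines convergence of finite-dimensional distributions, this step is then complete modulo verifying that the ``remainder'' contributions — those coming from off-diagonal contractions — are negligible, which is exactly what (b) encodes.

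For tightness, I would apply Lemma \ref{lemma_tightness} to each coordinate process $W_i$ separately: joint tightness in $D([0,1];\R^d)$ follows from coordinatewise tightness (componentwise tightness plus tightness of each coordinate implies tightness of the vector, since $D([0,1];\R^d)$ with the product Skorohod topology is separable and the marginal projections are continuous; alternatively one checks the vector-valued modulus directly). The key estimate is a bound of the form $\E|W_i(t)-W_i(s)|^2 \lesssim \big((\floor{nt}-\floor{ns})/n\big)^{1+\alpha}$ — but note $\beta=2$ with exponent $1+\alpha$ for $\alpha>0$ will require care, since the natural $L^2$ increment bound typically only yields exponent $1$; this is why condition (c) carries the extra $n^{\epsilon}$ gain. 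Concretely, $W_i(t)-W_i(s)$ is again a linear combination of degenerate $U$-statistic increments, its second moment expands into contraction norms $\norm{g_j(i)\star_a^b g_m(i)}_{L^2}$ of the type appearing in (c), and the $n^{\epsilon}$-boundedness there upgrades the trivial exponent $1$ to $1+\epsilon'$ for a suitable $\epsilon'>0$ — possibly after first reducing to increments over ``macroscopic'' time gaps and handling ``microscopic'' gaps ($\floor{nt}=\floor{ns}$, where the increment vanishes, or gaps of $O(1)$ indices) by an elementary argument. This tightness estimate, and in particular the bookkeeping that shows \emph{every} second-moment term in the increment expansion is controlled by a quantity of the form in condition (c) with the stated power of $n$, is the main obstacle: it is the same combinatorial heart as in the one-dimensional Theorem \ref{maintheo}, and I expect the multivariate proof to proceed by simply invoking the coordinatewise estimates already proved there, the only genuinely new content being the cross-covariance identification in the finite-dimensional step. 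I would therefore structure the proof as: (1) reduce to Theorem \ref{maintheo}/\ref{maintheo2} for the diagonal (tightness and marginal fidis), (2) establish cross-covariance convergence via the multidimensional CLT of D\"obler--Peccati (2019), (3) conclude by Cram\'er--Wold and Lemma \ref{lemma_tightness}.
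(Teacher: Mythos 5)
Your plan is essentially the paper's: the paper proves Theorem~\ref{mult1} by reducing conditions (a), (b), (c) to conditions (a'), (b'), (c') of Theorem~\ref{mult2} (via the combinatorial identity expressing $\langle\psi_k(i),\psi_k(j)\rangle_{L^2(\mu^k)}$ as an alternating sum of the $\langle g_q(i),g_q(j)\rangle_{L^2(\mu^q)}$, and via Lemma~\ref{genulemma} for the contraction norms), and the outline of Theorem~\ref{mult2}'s proof is exactly what you describe: Proposition~\ref{fidiprop} for the finite-dimensional distributions, the covariance computation identifying $\alpha_k(i,j)$, and coordinatewise tightness established by Theorem~\ref{gentightness}. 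One slip worth flagging: the tightness estimate the paper actually uses is a \emph{fourth}-moment bound ($\beta=4$ in Lemma~\ref{lemma_tightness}, via the Ibragimov--Sharakhmetov inequalities), not the $\beta=2$ second-moment bound you initially write down --- since the limit has Brownian-type increments, no $L^2$ bound can produce an exponent strictly greater than~$1$ uniformly in $(s,t)$, and the $n^\epsilon$ gain in condition~(c) does not remedy this at the level of variances --- but you then retreat to ``invoking the coordinatewise estimates already proved,'' which are indeed those fourth-moment bounds, so the overall argument stands.
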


\begin{theorem}[\bf Functional convergence of vectors of general $U$-statistics, II]\label{mult2}
The conclusion of Theorem \ref{mult1} continues to hold if the following three conditions {\rm (a')}, {\rm (b')} and {\rm (c')} replace conditions {\rm (a)}, {\rm (b)} and {\rm (c)}: 
 \begin{enumerate}[{\normalfont (a')}]
 \item For all $1\leq i\leq j\leq d$ and for all $1\leq k\leq p_i\wedge p_j$, the real limit 
 \[b_k(i,j):=\lim_{n\to\infty}\frac{n^{p_i+p_j-k}}{\sigma_n(i)\sigma_n(j)}\langle \psi_{k}(i),\psi_k(j)\rangle_{L^2(\mu^{k})}\]
 exists; 
  \item for all $1\leq i\leq k\leq d$, all $1\leq v\leq p_i$, all $1\leq u\leq p_k$	and all pairs $(l,r)$ of integers such that 
$1\leq r\leq v\wedge u$, $0\leq l\leq r\wedge(u+v-r-1)$,
  \[\lim_{n\to\infty}\frac{n^{p_i+p_k-\frac{u+v+r-l}{2}}}{\sigma_n(i)\sigma_n(k)}\,\norm{\psi_{v}(i)\star_r^{l}\psi_{u}(k)}_{L^2(\mu^{v+u-r-l})}=0\,;\]
  \item there exists some $\epsilon>0$, such that, for all $1\leq i\leq d$, all $1\leq r\leq p_i$ and all $0\leq l\leq r-1$, the sequence
  \[\frac{n^{2p_i-r-\frac{r-l}{2}+\epsilon}}{\sigma_n^2(i)}\norm{\psi_{r}(i)\star_r^{l}\psi_{r}(i)}_{L^2(\mu^{r-l})}\]
 is bounded.
 \end{enumerate}
\end{theorem}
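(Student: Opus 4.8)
The plan is to run the same two–step scheme that underlies Theorem \ref{maintheo2} (the one–dimensional counterpart), the multivariate statement requiring, on top of the scalar arguments, only the treatment of cross–component covariances and cross–component cumulant quantities. First I would prove convergence of the finite–dimensional distributions of $W^{(n)}=(W_1,\dots,W_d)^T$ to those of $Z$, using the multidimensional fourth–moment/cumulant central limit theorem of D\"obler and Peccati (2019); then I would prove tightness. Since the limit $Z$ has continuous paths (as observed right after \eqref{limformmult}), joint weak convergence in $D([0,1];\R^d)$ follows from convergence of the finite–dimensional distributions together with tightness of each scalar sequence $\{W_i^{(n)}\}_n$ in $D[0,1]$ — a weak limit of a coordinate sequence is continuous, so $J_1$–convergence of the coordinates is in fact uniform convergence, hence joint $J_1$–convergence — and for the latter Lemma \ref{lemma_tightness} applies verbatim to each coordinate.

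For the finite–dimensional distributions, fix $0\le t_1<\dots<t_M\le 1$ and use the Hoeffding decomposition to write
\[
W_i(t_m)=\sum_{k=1}^{p_i}\frac{\binom{\floor{nt_m}-k}{p_i-k}}{\binom{n-k}{p_i-k}}\,V_k^{(i)}(t_m),\qquad V_k^{(i)}(t_m):=J_k^{(\floor{nt_m})}\bigl(\phi^{(n,k)}(i)\bigr),\quad \phi^{(n,k)}(i):=\frac{\binom{n-k}{p_i-k}\psi_k(i)}{\sigma_n(i)},
\]
so that the entire finite–dimensional vector of $W^{(n)}$ is a fixed deterministic linear image of the vector $\{V_k^{(i)}(t_m)\}_{i,k,m}$ of degenerate $U$–statistics, whose orders $k$ and sample sizes $\floor{nt_m}$ vary. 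A direct computation — using the canonical nature of the $\psi_k(i)$ for the equal–time contributions and the multiplication formulae for degenerate symmetric $U$–statistics with different sample sizes for the remaining ones — gives $\Cov\bigl(V_k^{(i)}(s),V_m^{(j)}(t)\bigr)=\1_{\{k=m\}}\binom{\floor{n(s\wedge t)}}{k}\langle\phi^{(n,k)}(i),\phi^{(n,k)}(j)\rangle_{L^2(\mu^k)}$; letting $n\to\infty$, using $\binom{\floor{nt}-k}{q}/\binom{n-k}{q}\to t^{q}$ for fixed $q\ge 0$ and $\binom{\floor{n\tau}}{k}\sim(n\tau)^k/k!$, and invoking Condition (a'), yields precisely the covariance structure \eqref{limformmult} with $\alpha_k(i,j)=b_k(i,j)/(k!(p_i-k)!(p_j-k)!)$. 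It then remains to check the contraction conditions under which the D\"obler--Peccati (2019) multivariate theorem forces the vector $\{V_k^{(i)}(t_m)\}$ to be asymptotically Gaussian; these are of the form $\|\phi^{(n,k)}(i)\star_r^l\phi^{(n,k)}(j)\|_{L^2}\to0$ for $1\le r$ and $0\le l\le r-1$, and after substituting the definition of $\phi^{(n,k)}(i)$ and applying Lemma \ref{contrlemma}(ii)--(iii) to reduce cross–component contractions to diagonal ones, they are seen to be implied by the relevant instances of Condition (b'). Hence the finite–dimensional distributions of $W^{(n)}$ converge to those of $Z$.

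For tightness I would fix $i$ and $0\le s<t\le1$, write $c_k^{(i)}(t):=\binom{\floor{nt}-k}{p_i-k}/\binom{n-k}{p_i-k}\in[0,1]$, and decompose
\[
W_i(t)-W_i(s)=\sum_{k=1}^{p_i}\Bigl[c_k^{(i)}(t)\bigl(V_k^{(i)}(t)-V_k^{(i)}(s)\bigr)+\bigl(c_k^{(i)}(t)-c_k^{(i)}(s)\bigr)V_k^{(i)}(s)\Bigr].
\]
The increment $V_k^{(i)}(t)-V_k^{(i)}(s)$ is a sum of the degenerate kernel $\phi^{(n,k)}(i)$ over the $k$–subsets of $\{1,\dots,\floor{nt}\}$ meeting $\{\floor{ns}+1,\dots,\floor{nt}\}$; splitting old and new variables via the multiplication formulae and then invoking the moment bounds for degenerate $U$–statistics of Ibragimov and Sharakhmetov (2002), one controls its fourth moment by a combination of $\Var\bigl(V_k^{(i)}(t)-V_k^{(i)}(s)\bigr)^2$ and of products of contraction norms $\|\phi^{(n,k)}(i)\star_a^b\phi^{(n,k)}(i)\|_{L^2}$ weighted by powers of $\floor{nt}-\floor{ns}$. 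Since $\Var\bigl(V_k^{(i)}(t)-V_k^{(i)}(s)\bigr)=\bigl(\binom{\floor{nt}}{k}-\binom{\floor{ns}}{k}\bigr)\|\phi^{(n,k)}(i)\|_{L^2}^2$ with $\binom{\floor{nt}}{k}-\binom{\floor{ns}}{k}\le(\floor{nt}-\floor{ns})\binom{\floor{nt}}{k-1}\lesssim(\floor{nt}-\floor{ns})n^{k-1}$ and $\|\phi^{(n,k)}(i)\|_{L^2}^2\le\binom{n}{k}^{-1}=O(n^{-k})$ by \eqref{sigma}, the leading term is $O\bigl((\floor{nt}-\floor{ns})/n\bigr)^2$. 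Condition (c') — the diagonal instances of (b') sharpened by the factor $n^\epsilon$ — forces the contraction corrections to be $O\bigl((\floor{nt}-\floor{ns})/n\bigr)^{1+\delta}$ for some $\delta>0$, and $|c_k^{(i)}(t)-c_k^{(i)}(s)|\lesssim(\floor{nt}-\floor{ns})/n$ together with $\E[(V_k^{(i)}(s))^4]=O(1)$ (again by the Ibragimov--Sharakhmetov bounds and Conditions (b'),(c')) makes the second group of terms $O\bigl((\floor{nt}-\floor{ns})/n\bigr)^4$. Altogether $\E|W_i(t)-W_i(s)|^4\le C\bigl((\floor{nt}-\floor{ns})/n\bigr)^{1+\alpha}$ for some $\alpha>0$, so Lemma \ref{lemma_tightness} (with $\beta=4$) gives tightness of $\{W_i^{(n)}\}_n$; combined with the finite–dimensional convergence this yields $W^{(n)}\Longrightarrow Z$.

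The main obstacle is the combinatorial and algebraic bookkeeping underlying the last two steps: one must first establish the multiplication formulae for degenerate symmetric $U$–statistics with different sample sizes precisely enough that increments $V_k(t)-V_k(s)$ and their powers again decompose into tractable linear combinations of degenerate $U$–statistics (so that the Ibragimov--Sharakhmetov bounds and the D\"obler--Peccati CLT become applicable), and then verify that the powers of $n$ produced by the fourth–moment theorem and by the tightness criterion match exactly the normalizations $n^{\,p_i+p_k-(u+v+r-l)/2}/(\sigma_n(i)\sigma_n(k))$ and $n^{\,2p_i-r-(r-l)/2+\epsilon}/\sigma_n(i)^2$ built into Conditions (b') and (c'). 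The multivariate setting itself adds only the cross–component inner products $\langle\psi_k(i),\psi_k(j)\rangle$ and contractions $\psi_v(i)\star_r^l\psi_u(k)$, and these are reduced to the diagonal quantities already handled in the scalar case through the Cauchy--Schwarz–type estimates of Lemma \ref{contrlemma}(ii)--(iii).
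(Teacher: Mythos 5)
Your proof follows essentially the same route as the paper's own outline: Hoeffding-decompose each $W_i$, apply the multivariate CLT of D\"obler and Peccati (Proposition \ref{fidiprop}) in dimension $m\sum_j p_j$ for finite-dimensional convergence, and establish coordinatewise tightness (via Ibragimov--Sharakhmetov fourth-moment bounds) so that Lemma \ref{lemma_tightness} applies --- this is exactly how the paper uses Theorems \ref{fiditheo} and \ref{gentightness}. Two bookkeeping slips would need repair in a fully written-out argument. First, you characterize the contraction conditions of Proposition \ref{fidiprop} as $\|\phi^{(n,k)}(i)\star_r^l\phi^{(n,k)}(j)\|_{L^2}\to0$ with the \emph{same} Hoeffding order $k$ on both kernels; this misses the cross-order contractions $\psi_v(i)\star_r^l\psi_u(k)$ with $v\neq u$ that the Hoeffding decomposition necessarily produces and that Condition (b') is built to control directly --- the detour through Lemma \ref{contrlemma}(ii)--(iii) to ``reduce to diagonal'' is neither needed nor used in the paper. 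Also, the admissible range in (b') is $0\leq l\leq r\wedge(u+v-r-1)$, allowing $l=r$, not the narrower $0\leq l\leq r-1$ you state (which is the (c') range). Second, for tightness the paper applies Theorem \ref{gentightness} to each $V_k^{(i)}$ and argues via uniform convergence of $c_k^{(i)}(t)\to t^k$, rather than your direct increment bound $|c_k^{(i)}(t)-c_k^{(i)}(s)|\lesssim(\floor{nt}-\floor{ns})/n$; your variant works, but $\E[(V_k^{(i)}(s))^4]=O(1)$ must be proved from (c') rather than assumed, since variance normalization alone does not control fourth moments.
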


\section{Applications}\label{applis}
\subsection{Subgraph counting in random geometric graphs}\label{grg}

{\it Random geometric graphs} are graphs whose vertices are random points scattered on some Euclidean domain, and whose edges are determined by some explicit geometric rule; in view of their wide applicability (for instance, to the modelling of telecommunication networks), these objects represent a very popular alternative to the combinatorial Erd\"os-R\'enyi random graphs. We refer to the texts \cite{Penrose, PR_book} for an introduction to this topic, and for an overview of related applications. Our aim is to use our main findings (Theorem \ref{maintheo} and Theorem \ref{maintheo2}) in order to establish a new collection of FCLTs for arbitrary subgraph counting statistics associated with generic sequences of random graphs. These FCLTs -- whose statements appear in Theorem \ref{graphtheo} below --  hold in full generality and with minimal restrictions with respect to the already existing one-dimensional CLTs; as such, they substantially extend the one-dimensional CLTs proved in \cite[Section 3.5 and Section 3.4]{Penrose}, as well as in \cite{JJ, BhaGo92, DP18}. 

\medskip

We fix a dimension $d\geq1$ as well as a bounded and Lebesgue almost everywhere continuous probability density function $f$ on $\R^d$. Let $\mu(dx):=f(x)dx$ be the corresponding probability measure on $(\R^d,\B(\R^d))$ and suppose that 
$X_1,X_2,\dotsc$ are i.i.d. with distribution $\mu$. Let $X:=\{ X_j : j\in\N\}$. We denote by $\{ t_n : n\in\N\}$ a sequence of radii in $(0,\infty)$ such that $\lim_{n\to\infty}t_n=0$. For each $n\in\N$, we denote by $G(X;t_n)$ the 
\textbf{random geometric graph} obtained as follows. The vertices {of $G(X;t_n)$} are given by the set $V_n:=\{X_1,\dotsc,X_n\}$, which $\Prob$-a.s. has cardinality $n$, 
and two vertices $X_i,X_j$ are connected if and only if $0<\Enorm{X_i-X_j}<t_n$. Furthermore, let $p\geq2$ be a fixed integer and suppose that 
$\Gamma$ is a fixed connected graph on $p$ vertices. For each $n$ we denote by $G_n(\Gamma)$ the number of induced subgraphs of $G(X;t_n)$ which are isomorphic to $\Gamma$. 
Recall that an induced subgraph of $G(X;t_n)$ consists of a non-empty subset $V_n'\subseteq V_n$ with an edge set precisely given by the set of edges of $G(X;t_n)$ whose endpoints are {both} in $V_n'$. 
We will also have to assume that $\Gamma$ is \textbf{feasible} for every 
$n\geq p$. This means that the probability that the restriction of $G(X;t_n)$ to $X_1,\dotsc,X_p$ is isomorphic to $\Gamma$ is strictly positive for $n\geq p$. Note that feasibility depends on the common distribution $\mu$ of the points.
The quantity $G_n(\Gamma)$ is a symmetric $U$-statistic of $X_1,\dotsc,X_n$, since 
\begin{equation*}
 G_n(\Gamma)=\sum_{1\leq i_1<\ldots<i_p\leq n} \psi_{\Gamma,t_n}(X_{i_1},\dotsc,X_{i_p})\,,
\end{equation*}
where $\psi_{\Gamma,t_n}(x_1,\dotsc,x_p)$ equals $1$ if the graph with vertices $x_1,\dotsc,x_p$ and edge set $\{\{x_i,x_j\}\,:\, 0<\Enorm{x_i-x_j}< t_n\}$ is isomorphic to $\Gamma$, and equals $0$ otherwise. 
We denote the corresponding normalized $U$-process by $\{ W_n(t)  : {t\in[0,1]}\} $, i.e. 
\begin{equation*}
 W_n(t)=\frac{U_n(t)-\E[U_n(t)]}{\Var\bigl(G_n(\Gamma)\bigr)^{1/2}}\,,
\end{equation*}
where 
\begin{equation*}
 U_n(t):= G_{\lfloor nt \rfloor} (\Gamma) = \sum_{1\leq i_1<\ldots<i_p\leq \floor{nt}} \psi_{\Gamma,t_n}(X_{i_1},\dotsc,X_{i_p})\,,\quad 0\leq t\leq 1\,.
\end{equation*}

For obtaining asymptotic normality one typically distinguishes between three different asymptotic regimes:
\begin{enumerate}
 \item[\textbf{(R1)}] $nt_n^d\to0$ and $n^p t_n^{d(p-1)}\to\infty$ as $n\to\infty$ (\textit{sparse regime})
 \item[\textbf{(R2)}] $n t_n^d\to\infty$ as $n\to\infty$ (\textit{dense regime})
 \item[\textbf{(R3)}] $nt_n^d\to\rho\in(0,\infty)$ as $n\to\infty$ (\textit{thermodynamic regime})
\end{enumerate}
Note that we could rephrase the regimes  $\textbf{(R1)}$ and $\textbf{(R2)}$ as follows:
\begin{enumerate}
 \item[\textbf{(R1)}] $\displaystyle \Bigl(\frac{1}{n}\Bigr)^{\frac{p}{p-1}}\ll t_n^d\ll\frac{1}{n}$  
\item[\textbf{(R2)}] $\displaystyle \frac{1}{n}\ll t_n^d$,
\end{enumerate}
where, for positive sequence $a_n$ and $b_n$ we write $a_n\ll b_n$, $n\in\N$, if and only if $\lim_{n\to\infty}a_n/b_n=0$.
It turns out that, under regime \textbf{(R2)} one also has to take into account whether the common distribution $\mu$ of the $X_j$ is the uniform distribution $\mathcal{U}(M)$ on some Borel subset $M\subseteq\R^d$, $0<\lambda^d(M) <\infty$ with 
density $f(x)=\lambda^d(M)^{-1}\,{\bf 1}_M(x)$, or not. 
To deal with this peculiarity, we will therefore distinguish between the following four cases:
\begin{enumerate}
 \item[\textbf{(C1)}] $nt_n^d\to0$ and $n^p t_n^{d(p-1)}\to\infty$ as $n\to\infty$. 
 \item[\textbf{(C2)}] $n t_n^d\to\infty$ as $n\to\infty$ and $\mu= \mathcal{U}(M)$ for some Borel subset $M\subseteq\R^d$ s.t. $0<\lambda^d(M) <\infty$.
 \item[\textbf{(C3)}] $n t_n^d\to\infty$ as $n\to\infty$, and $\mu$ is not a uniform distribution.
 \item[\textbf{(C4)}] $nt_n^d\to\rho\in(0,\infty)$ as $n\to\infty$.
 \end{enumerate}

The following important variance estimates will be needed. Except for the case \textbf{(C2)}, which needs a special consideration, these have already been derived in the book \cite{Penrose}. Since it does not make the argument much longer, we provide the whole proof.
 
\begin{proposition}\label{regprop}
 Under all regimes {\normalfont \textbf{(R1)}}, {\normalfont\textbf{(R2)}} and {\normalfont\textbf{(R3)}} it holds that\\
 $\E[G_n(\Gamma)]\sim c n^{p}t_n^{d(p-1)}$ for a constant $c\in(0,\infty)$. Moreover, there exist constants $c_1,c_2,c_3,c_4\in(0,\infty)$ such that, as $n\to\infty$,  
 \begin{enumerate}
  \item[{\normalfont \textbf{(C1)}}] $\Var(G_n(\Gamma))\sim c_1\cdot n^p t_n^{d(p-1)}$,
  \item[{\normalfont \textbf{(C2)}}] $\Var(G_n(\Gamma))\geq c_2\cdot n^{2p-2} t_n^{d(2p-3)}$ for all $n\in\N$,
  \item[{\normalfont \textbf{(C3)}}] $\Var(G_n(\Gamma))\sim c_3\cdot n^{2p-1} t_n^{d(2p-2)}$,
  \item[{\normalfont \textbf{(C4)}}] $\Var(G_n(\Gamma))\sim c_4\cdot n$.
 \end{enumerate}
\end{proposition}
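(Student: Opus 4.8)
\textbf{Proof plan for Proposition \ref{regprop}.}

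The plan is to work directly with the Hoeffding decomposition \eqref{HDsym} of $G_n(\Gamma)=J_p^{(n)}(\psi_{\Gamma,t_n})$, so that by \eqref{sigma2} one has
\begin{equation*}
\Var(G_n(\Gamma))=\binom{n}{p}\sum_{k=1}^p\binom{p}{k}\binom{n-p}{p-k}\Var\bigl(g_k(X_1,\dotsc,X_k)\bigr),
\end{equation*}
where $g_k(y_1,\dotsc,y_k)=\E[\psi_{\Gamma,t_n}(y_1,\dotsc,y_k,X_1,\dotsc,X_{p-k})]$. Since $\binom{n}{p}\binom{n-p}{p-k}\asymp n^{2p-k}$, the whole problem reduces to estimating $\Var(g_k(X_1,\dotsc,X_k))$ for each $k$ in each regime, and then identifying which value of $k$ dominates the sum. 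The first step is to compute $\E[G_n(\Gamma)]=\binom{n}{p}\,g_0$ where $g_0=\P(\text{restriction of }G(X;t_n)\text{ to }X_1,\dotsc,X_p\text{ is }\cong\Gamma)$. A standard scaling argument (substituting $x_j=x_1+t_n u_j$, using boundedness and a.e.-continuity of $f$, connectedness of $\Gamma$, and dominated convergence) gives $g_0\sim c'\,t_n^{d(p-1)}$ for a constant $c'=c'(\Gamma,f,d)>0$ that is finite precisely because $\Gamma$ is connected (so the integral over the $p-1$ relative positions $u_2,\dotsc,u_p$ converges) and positive precisely because $\Gamma$ is feasible; hence $\E[G_n(\Gamma)]\sim c\,n^p t_n^{d(p-1)}$.

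Next I would estimate $\Var(g_k)$ by the same scaling heuristic. Write $g_k(y_1,\dotsc,y_k)=\int \psi_{\Gamma,t_n}(y_1,\dotsc,y_k,x_1,\dotsc,x_{p-k})\,d\mu^{p-k}(x)$; for this to be nonzero the points $y_1,\dotsc,y_k$ must themselves be spread over at most finitely many ``clumps'' of diameter $O(t_n)$, and conditionally the remaining $p-k$ integration variables are forced into $t_n$-neighbourhoods, each contributing a factor $t_n^d$. The key combinatorial input is the connectedness of $\Gamma$: if the $k$ fixed vertices occupy $c\geq1$ connected components of the induced sub-configuration, then realising $\Gamma$ forces the $p-k$ free vertices to ``bridge'' these components, and a careful bookkeeping of how many of the $t_n^d$-factors remain yields $g_k(y_1,\dotsc,y_k)=O(t_n^{d(p-k)})$ pointwise on the relevant set, with the dominant contribution coming from $c=1$. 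Integrating the square over $\mu^k$ then costs an extra $t_n^{d(k-1)}$ (from the $k-1$ relative positions of the $y_j$ within their single clump), so in the \emph{sparse} and \emph{dense} heuristic one gets $\E[g_k^2]\asymp t_n^{d(2(p-k)+(k-1))}=t_n^{d(2p-k-1)}$, while $g_0^2\asymp t_n^{2d(p-1)}$. Plugging into the variance sum, the term of index $k$ is of order $n^{2p-k}t_n^{d(2p-k-1)}=(nt_n^d)^{-k}\cdot n^{2p}t_n^{d(2p-1)}$ up to the $k=p$ correction; hence in \textbf{(R2)} (where $nt_n^d\to\infty$) the sum is dominated by $k=1$, giving $\Var\sim c_3\,n^{2p-1}t_n^{d(2p-2)}$, and in \textbf{(R1)} (where $nt_n^d\to0$, so small $k$ is penalised) it is dominated by $k=p$, i.e.\ by the term $\binom{n}{p}\Var(g_p)\asymp n^p\cdot t_n^{d(p-1)}$ once one notes $g_p=\psi_{\Gamma,t_n}$ is an indicator so $\Var(g_p)\leq\E[g_p^2]=\E[g_p]=g_0\asymp t_n^{d(p-1)}$ and a matching lower bound; this gives $\Var\sim c_1\,n^p t_n^{d(p-1)}$. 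In the thermodynamic regime \textbf{(R3)}, $nt_n^d\to\rho$ makes all the scales comparable and every $\Var(g_k)\asymp t_n^{d(2p-k-1)}\asymp n^{-(2p-k-1)}$, so each summand is $\Theta(n)$ and $\Var\sim c_4\,n$, with $c_4>0$ once one checks the limiting constants do not cancel (they don't, since $g_1$'s contribution alone is a genuine variance, hence nonnegative, and strictly positive by feasibility).

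The genuinely new point, and the main obstacle, is case \textbf{(C2)}: when $\mu=\mathcal{U}(M)$ the leading-order term of $g_1$ turns out to be \emph{deterministic} — for $y_1$ in the bulk of $M$, $g_1(y_1)$ is asymptotically constant in $y_1$ (it depends on $y_1$ only through boundary effects, because the relative-position integral $\int_{(\R^d)^{p-1}} \mathbf 1\{\cdots\cong\Gamma\}\,du$ is independent of $y_1$ as long as the $t_n$-ball around $y_1$ stays inside $M$). Consequently the would-be dominant $t_n^{d(2p-3)}$-term in $\Var(g_1)$ has vanishing leading coefficient, and one cannot conclude an exact asymptotic equivalence — only a lower bound. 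The fix is: (i) show $\Var(g_1)\geq c\,t_n^{d(2p-2)}\cdot o(1)$-free bound is \emph{not} available, so instead bound $\Var(G_n(\Gamma))$ from below by the \emph{next} non-degenerate projection; more precisely, isolate the $k=2$ term $\binom{n}{2}\binom{n-p}{p-2}\Var(g_2)\asymp n^{2p-2}\Var(g_2)$ and prove $\Var(g_2(X_1,X_2))\geq c\,t_n^{d(2p-3)}$ by exhibiting a set of positive $\mu^2$-measure of pairs $(y_1,y_2)$ on which $g_2$ genuinely varies — e.g.\ choosing $y_1,y_2$ at distance $\Theta(t_n)$ versus at distance $\gg t_n$ changes $g_2$ by an amount $\Theta(t_n^{d(p-2)})$, which after squaring and integrating over the $\Theta(t_n^d)$-measure of such configurations gives the claimed $t_n^{d(2p-3)}$. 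Since all summands in \eqref{sigma2} are nonnegative, this single term yields $\Var(G_n(\Gamma))\geq c_2\,n^{2p-2}t_n^{d(2p-3)}$ for all $n$, which is exactly the asserted \textbf{(C2)} bound; note that no upper bound is claimed in \textbf{(C2)}, which is what makes the statement provable without tracking the delicate boundary cancellations. I would present the connected-graph scaling lemma (the pointwise bounds on $g_0$ and $g_k$) as the technical heart, then feed it into \eqref{sigma2} four times.
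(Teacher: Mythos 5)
Your argument takes essentially the same route as the paper's: reduce to $\Var(g_k)$ via \eqref{sigma2}, extract $\norm{g_k}_{L^2(\mu^k)}^2\sim d_k\, t_n^{d(2p-k-1)}$ and $(\E\psi)^2\sim\nu^2\, t_n^{d(2p-2)}$ by the scaling substitution $x_j=x_1+t_n u_j$, then compare summands regime by regime via the ratio $n t_n^d$. The one methodological difference is how you identify the \textbf{(C2)} degeneracy: you argue geometrically that $g_1$ is asymptotically constant in the bulk of $M$ when $\mu$ is uniform, whereas the paper observes that $d_1\geq\nu^2$ is a Jensen inequality, $\bigl(\int_{\R^d} f^p\,dx\bigr)^2\leq\int_{\R^d}f^{2p-1}\,dx$, with equality iff $f^{p-1}$ is $\mu$-a.s.\ constant. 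Your picture is correct, but the Jensen route is sharper --- it pins down exactly which $\mu$ degenerate and yields the explicit constant $d_1-\nu^2$ that is then re-used in Theorem \ref{graphtheo}. Your replacement lower bound on $\Var(g_2)$ via the diagonal strip $\Enorm{y_1-y_2}\asymp t_n$ is a valid alternative to the paper's evaluation of $d_2>0$.

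There is, however, a genuine (if small) gap in your treatment of \textbf{(C4)}. You anchor $c_4>0$ on the claim that ``$g_1$'s contribution alone\ldots is strictly positive by feasibility'', but feasibility gives $\E\psi>0$, not $\Var(g_1)\gtrsim t_n^{d(2p-2)}$; when $\mu$ is uniform this is exactly the term whose leading coefficient $d_1-\nu^2$ vanishes, so $k=1$ cannot serve as the strictly positive anchor in general. The fix is immediate given that each summand in \eqref{sigma2} is nonnegative: anchor on any $k\geq 2$, most cheaply $k=p$, where $\Var(g_p)=\E\psi(1-\E\psi)\sim\E\psi\asymp t_n^{d(p-1)}$ gives a $\Theta(n)$ contribution in the thermodynamic regime, or re-use the $k=2$ bound that you already produced for \textbf{(C2)}.
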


\begin{proof}
For notational convenience, for $ k=0,1,\dotsc,p$ we simply denote by $g_k$ the function 
\[(x_1,\dotsc,x_k)\mapsto \E\bigl[\psi_{\Gamma,t_n}(x_1,\dotsc,x_k,X_1,\dotsc,X_{p-k})\bigr],\] 
corresponding to the kernel $\psi_{\Gamma,t_n}$, i.e. we suppress the dependence on $n$ and 
on the graph $\Gamma$. 
We will make use of formula \eqref{sigma2}. First note that, for $1\leq k\leq p$,
\begin{equation*}
\Var\bigl(g_k(X_1,\dotsc,X_k)\bigr)= \norm{g_k}_{L^2(\mu^k)}^2-\bigl(\E[\psi(X_1,\dotsc,X_p)]\bigr)^2\,.
\end{equation*}
Hence, by \eqref{sigma2}, we have 
\begin{align}\label{rgvar}
\Var(G_n(\Gamma))&=\binom{n}{p}\sum_{k=1}^p\binom{p}{k}\binom{n-p}{p-k}\Bigl( \norm{g_k}_{L^2(\mu^k)}^2-\bigl(\E[\psi(X_1,\dotsc,X_p)]\bigr)^2 \Bigr) \notag\\
&\sim \sum_{k=1}^p  \frac{n^{2p-k}}{k! ((p-k)!)^2 }\Bigl( \norm{g_k}_{L^2(\mu^k)}^2-\bigl(\E[\psi(X_1,\dotsc,X_p)]\bigr)^2 \Bigr)  \,.
\end{align}
For $k=1,\dotsc,p$, we have that (e.g. by dominated convergence)
\begin{align}\label{gksq}
& \norm{g_k}_{L^2(\mu^k)}^2=\int_{({\R^d})^k} g_k(x_1,\dotsc,x_k)^2 d\mu^k(x_1,\dotsc,x_k)\notag\\
& =\int_{({\R^d})^k} \Bigl(\int_{({\R^d})^{p-k}}\psi_{\Gamma,t_n}(x_1,\dotsc,x_k,y_1,\dotsc,y_{p-k})d\mu^{p-k}(y_1,\dotsc,y_{p-k})\Bigr)^2d\mu^k(x_1,\dotsc,x_k)\notag\\
&=\int_{({\R^d})^k}\int_{({\R^d})^{p-k}\times ({\R^d})^{p-k}} \psi_{\Gamma,t_n}(x_1,\dotsc,x_k,y_1,\dotsc,y_{p-k})\psi_{\Gamma,t_n}(x_1,\dotsc,x_k,u_1,\dotsc,u_{p-k})\notag\\
&\hspace{4cm}\cdot\prod_{j=1}^k f(x_j)dx_j\prod_{i=1}^{p-k} f(y_j)dy_j\prod_{l=1}^{p-k} f(u_l)du_l\notag\\
&=\int_{\R^d} f(x_1) \int_{({\R^d})^{k-1}}\int_{({\R^d})^{p-k}\times ({\R^d})^{p-k}} \psi_{\Gamma,1}(0,w_2\dotsc,w_k,z_1,\dotsc,z_{p-k})\\
&\hspace{1cm}\cdot\psi_{\Gamma,1}(0,w_2,\dotsc,w_k,v_1,\dotsc,v_{p-k})\notag\\
&\hspace{1cm}\cdot(t_n^d)^{2p-k-1}\prod_{j=2}^k f(x_1+t_nw_j)dw_j\prod_{i=1}^{p-k} f(x_1+t_nz_i)dz_i\prod_{l=1}^{p-k} f(x_1+t_nu_l)du_l\;dx_1 \notag\\
&\sim d_k (t_n^d)^{2p-k-1}\,,
\end{align}
where, for $1\leq k\leq p$,
\begin{align}\label{e:dk}
 d_k&:=\int_{\R^d} f(x_1)^{2p-k}dx_1 \int_{({\R^d})^{k-1}}\int_{({\R^d})^{p-k}\times ({\R^d})^{p-k}} \psi_{\Gamma,1}(0,w_2\dotsc,w_k,z_1,\dotsc,z_{p-k})\\ \notag
&\hspace{3cm}\cdot \psi_{\Gamma,1}(0,w_2,\dotsc,w_k,v_1,\dotsc,v_{p-k})\prod_{j=2}^k dw_j\prod_{i=1}^{p-k} dz_i\prod_{l=1}^{p-k} du_l\,.
\end{align}
Also, from \cite[Proposition 3.1]{Penrose} we know that 
\begin{align*}
 \E\bigl[G_n(\Gamma)\bigr]&\sim \frac{n^{p}}{p!}t_n^{d(p-1)}\nu\,,
\end{align*}
where 
\begin{align}\label{e:nu}
 \nu= \nu(p, \Gamma) := \int_{\R^d} f(x)^pdx\int_{({\R^d})^{p-1}}\psi_{\Gamma,1}(0,y_2,\dotsc,y_p)dy_2\ldots dy_p>0\,.
\end{align}
This implies that 
\begin{align}\label{epsisq}
\bigl(\E[\psi(X_1,\dotsc,X_p)]\bigr)^2&\sim\nu^2 (t_n^d)^{2p-2}\,.
\end{align}
Since $0<t_n<1$, for $2\leq k\leq p$, this yields that  
\begin{align*}
\Var\bigl(g_k(X_1,\dotsc,X_k)\bigr)&= \norm{g_k}_{L^2(\mu^k)}^2-\bigl(\E[\psi(X_1,\dotsc,X_p)]\bigr)^2\\
&\sim d_k (t_n^d)^{2p-k-1}\,.
\end{align*}
In order to discuss the case $k=1$ we have to  carefully compare $d_1$ to $\nu$.
Note that we have 
\begin{align*}
 d_1&=\int_{\R^d} f(x)^{2p-1}dx\biggl(\int_{({\R^d})^{p-1}}\psi_{\Gamma,1}(0,y_2,\dotsc,y_p)dy_2\ldots dy_p\biggr)^2
\end{align*}
so that $d_1>\nu^2$ if and only if 
\[\int_{\R^d} f(x)^{2p-1}dx>\biggl(\int_{\R^d} f(x)^pdx\biggr)^2\,.\]
By Jensen's inequality we have 
\begin{align*}
 \biggl(\int_{\R^d} f(x)^pdx\biggr)^2= \biggl(\int_{\R^d} f(x)^{p-1} d\mu(x)\biggr)^2&\leq \int_{\R^d} f(x)^{2p-2} d\mu(x)\\
 &=\int_{\R^d} f(x)^{2p-1}dx
\end{align*}
with equality, if and only if, $f(x)^{p-1}$ is $\mu$-a.s. constant, i.e., if and only if $\mu$ is a uniform distribution on some Borel subset $M\subseteq\R^d$ s.t. $0<\lambda^d(M) <\infty$.
Thus, if $\mu$ is not a uniform distribution we obtain that 
\begin{align*}
\Var\bigl(g_1(X_1)\bigr)&\sim(d_1-\nu^2)(t_n^d)^{2p-2}\,,
\end{align*}
whereas, if $\mu$ is a uniform distribution we can only conclude that 
\begin{align*}
\Var\bigl(g_1(X_1)\bigr)&\lesssim (t_n^d)^{2p-2}
\end{align*}
but, in general, we cannot give any lower asymptotic bound on $\Var\bigl(g_1(X_1)\bigr)$.
Note that, for $1\leq k\leq p-1$ we have 
\begin{align}\label{rgvardom}
\frac{n^{2p-k}(t_n^d)^{2p-k-1}}{n^{2p-k-1}(t_n^d)^{2p-k-2}}=n t_n^d\stackrel{n\to\infty}{\longrightarrow}
\begin{cases}
0\,,& \text{under }\textbf{(R1)}\\
\infty\,,& \text{under }\textbf{(R2)}\\
\rho\,,& \text{under }\textbf{(R3)}\,.
\end{cases}
\end{align}
This implies that there are positive constants $c_1,c_3$ and $c_4$ such that
\begin{align}\label{rgvar2}
\Var(G_n(\Gamma))&\sim\begin{cases}
c_1 n^p(t_n^d)^{p-1}\,,&\text{in case }\textbf{(C1)}\\
c_3 n^{2p-1}(t_n^d)^{2p-2}\,,&\text{in case }\textbf{(C3)}\\
c_4 n\,,&\text{in case }\textbf{(C4)}\,,
\end{cases}
\end{align}
whereas, in case \textbf{(C2)} we can conclude (as claimed) that there is a positive constant $c_2$ such that
\begin{align}\label{rgvar3}
\Var(G_n(\Gamma))&\geq \binom{p}{2}\binom{n-p}{p-2}\Bigl( \norm{g_2}_{L^2(\mu^2)}^2-\bigl(\E[\psi(X_1,\dotsc,X_p)]\bigr)^2 \Bigr) 
\sim c_2 n^{2p-2}(t_n^d)^{2p-3}\,.
\end{align}

\end{proof}

The next collection of FCLTs, extending the one-dimensional CLT proved in \cite{Penrose,DP18}, is the main result of the section. Note that, in view of the large number of parameters, in the forthcoming Theorem \ref{graphtheo} we choose to express the distribution of the limit process $Z$ directly in terms of its covariance function \eqref{e:limcov}, rather than using the representation \eqref{e:limsum}. We will also need the following definition: For fixed $\rho\in(0,\infty)$, introduce the positive definite function $\Psi : [0,1]\times [0,1]\to \R : (s,t)\mapsto \Psi(s,t)$ given by
\begin{align}\label{Psi}
 \Psi(s,t):=  \biggl(\sum_{l=1}^p\frac{\rho^{2p-l-1}(d_l-\delta_{l,1}\nu^2)}{l!(p-l)!(p-l)!}\biggr)^{-1}\sum_{k=1}^p \frac{(s\wedge t)^p  (s\vee t)^{p-k}   }{k!(p-k)!(p-k)!}   
\rho^{2p-k-1}(d_k-\delta_{k,1}\nu^2)
\end{align}
where $d_k$, $1\leq k\leq p$, and $\nu$ have been defined in \eqref{e:dk} and \eqref{e:nu}, respectively. 

\medskip

\begin{theorem}[\bf FCLTs for subgraph counting in random geometric graphs]\label{graphtheo}
Let the above assumptions and notation prevail. Then, in the cases
{\normalfont\textbf{(C3)}} and {\normalfont\textbf{(C4)}}, the sequence $W_n = \{ W_n(t)) : {t\in[0,1]}\}$, $n\in\N$ is such that $W_n \Longrightarrow Z$, $n\to \infty$, for some continuous Gaussian process $Z$. 
In the case {\normalfont\textbf{(C1)}}, this convergence holds if, additionally, there is a $\delta>0$ such that $\displaystyle \Bigl(\frac{1}{n}\Bigr)^{\frac{p}{p-1}-\delta}\ll t_n^d\ll\frac{1}{n}$ 
and in the case {\normalfont\textbf{(C2)}}, this convergence holds if, additionally, there is a $\delta>0$ such that $\displaystyle \frac{1}{n}\ll t_n^d\ll n^{-1/2-\delta}$ and if the limits $b_1^2$ and $b_2^2$ do exist, where the parameters $b^2_1$ and $b^2_2$ have been defined in Theorem \ref{maintheo}). The covariance function $\Gamma:[0,1]\times[0,1]\rightarrow\R$ of $Z$ is given by 
\begin{align*}
 \Gamma(s,t)&=\begin{cases}
                                                                                             (s\wedge t)^p\,,& \text{under {\normalfont\textbf{(C1)}}}\\
                                               \frac{(s\wedge t)^p  (s\vee t)^{p-1}}{1+\lambda^{-1}}+ \frac{(s\wedge t)^p  (s\vee t)^{p-2}}{1+\lambda}  \,,& \text{under {\normalfont\textbf{(C2)}}}\\
                                                                                               (s\wedge t)^p  (s\vee t)^{p-1}\,,& \text{under {\normalfont\textbf{(C3)}}}\\
                                                                                              \Psi(s,t)\,,& \text{under {\normalfont\textbf{(C4)}}}\\
                                                                                            \end{cases}
\,,
 \end{align*} 
where $\Psi(s,t)$ is defined in \eqref{Psi} and $\lambda\in[0,+\infty]$ is given in \eqref{lambdac2} below. In particular, in the case {\normalfont\textbf{(C1)}} one has that $W_n \Longrightarrow \{B(t^p) : t\in [0,1]\}$, where $B$ denotes a standard Brownian motion.
\end{theorem}

\begin{remark}{\rm 
 Note that, interestingly, in the case {\normalfont\textbf{(C4)}} the covariance function $\Psi$ of the limiting process depends not only on $\rho$ but also on the difference $d_1-\nu^2$. In particular, the analytic properties of 
 $\Psi$ depend on whether $\mu$ is a uniform distribution or not.}
\end{remark}

\begin{proof}[Proof of Theorem \ref{graphtheo}]
Let us first disregard the case \textbf{(C2)}. Then, by Proposition \ref{regprop}, \eqref{gksq} and \eqref{epsisq} we have 
\begin{align*}
 &\frac{n^{2p-k}}{\sigma_n^2}\bigl(\norm{g_{k}}^2_{L^2(\mu^{k})}-(\E[\psi(X_1,\dotsc,X_p)])^2\bigr)\\
 \sim& \frac{n^{2p-k}\bigl(d_k (t_n^d)^{2p-k-1}-\nu^2 (t_n^d)^{2p-2}\bigr)}{\sum_{l=1}^p\frac{ n^{2p-l}}{l!(p-l)!(p-l)!}\bigl(d_l (t_n^d)^{2p-l-1}-\nu^2 (t_n^d)^{2p-2}\bigr)}.
\end{align*}
By relation \eqref{rgvardom} this implies that 
\begin{align}\label{grbk}
 b_k^2&=\lim_{n\to\infty} \frac{n^{2p-k}}{\sigma_n^2}\bigl(\norm{g_{k}}^2_{L^2(\mu^{k})}-(\E[\psi(X_1,\dotsc,X_p)])^2\bigr)\notag
\\
&=\begin{cases}
 p! \delta_{k,p}\,,& \text{in case \textbf{(C1)}}\\
  (p-1)!(p-1)!\delta_{k,1}\,,&\text{in case \textbf{(C3)}}\\
   \frac{\rho^{2p-k-1}(d_k-\delta_{k,1}\nu^2)}{\sum_{l=1}^p\frac{\rho^{2p-l-1}(d_l-\delta_{l,1}\nu^2)}{l!(p-l)!(p-l)!}}\,,&\text{in case \textbf{(C4)}}.
   \end{cases}
   \end{align}
   In the case \textbf{(C2)}, in general we only know that 
\begin{align*}
\sigma_n^2&\sim \frac{n^{2p-1}}{(p-1)!(p-1)!}\Var(g_1(X_1))+d_2\frac{n^{2p-2}(t_n^d)^{2p-3}}{2(p-2)!(p-2)!}\,,
\end{align*}
where
\[\Var(g_1(X_1))\lesssim(t_n^d)^{2p-2}\,.\]
Thus, using \eqref{rgvardom} we can conclude that 
\begin{equation}\label{bkg3c2}
b_k^2=\lim_{n\to\infty} \frac{n^{2p-k}}{\sigma_n^2}\bigl(\norm{g_{k}}^2_{L^2(\mu^{k})}-(\E[\psi(X_1,\dotsc,X_p)])^2\bigr)=0\,,\quad 3\leq k\leq p\,.
\end{equation}
Hence, if both limits $b_1^2$ and $b_2^2$ do exist, depending on the precise order of $\Var(g_1(X_1))$, the three scenarios 
\begin{enumerate}[(a)]
\item $b_1^2=(p-1)!(p-1)!$ and $b_2^2=0$
\item $b_1^2=0$ and $b_2^2=2(p-2)!(p-2)!$ and 
\item $\displaystyle b_1^2=(p-1)!(p-1)!\bigl(1+\lambda^{-1}\bigl)^{-1}$ and $b_2^2=2(p-2)!(p-2)!\bigl(\lambda +1\bigl)^{-1}$ for some $\lambda\in(0,\infty)$
\end{enumerate}
are possible, where 
\begin{equation}\label{lambdac2}
\lambda=\lim_{n\to\infty} \frac{n^{2p-1}}{(p-1)!(p-1)!}\Var(g_1(X_1)) \cdot\biggl(  d_2\frac{n^{2p-2}(t_n^d)^{2p-3}}{2(p-2)!(p-2)!}\biggr)^{-1}\,.
\end{equation}       
Note that (c) contains (a) and (b) if we allow for $\lambda=0$ and $\lambda=+\infty$ by adopting the conventions that $a/0:=\infty$ and $a/\infty:=0$ for $a\in(0,\infty)$.

In particular, \eqref{grbk} and \eqref{bkg3c2} make sure that condition (a) of Theorem \ref{maintheo} is always satisfied in this example and that the covariance function $\Gamma$ of the potential limiting process given by 
\begin{align*}
 \Gamma(s,t)&=\sum_{k=1}^p \frac{(s\wedge t)^p  (s\vee t)^{p-k}   }{k!(p-k)!(p-k)!}  b_k^2 
 \end{align*} 
coincides with the one in the statement. Now fix integers $1\leq v\leq u\leq p$ and $l,r$ such that 
 $1\leq r\leq v$ and $0\leq l\leq r\wedge (u+v-r-1)$. 
The computations on pages 4196-4197 of \cite{LRP2} show that for all 
\begin{align*}
 (j,m,a,b)\in P&:=\Bigl(\{(j,m,a,b):1\leq b\leq a\leq j\leq m\text{ and } b<m\}\\
 &\hspace{3cm}\cup \{(j,m,a,b): j=m=a\text{ and } b=0\}\Bigr)
 \cap Q(v,u,r,l)
\end{align*}
one has 
  \begin{align}\label{sg1}
   \|g_{j}\star_a^b g_{m}\|^2_{L^2(\mu^{ j+m-a-b})}&=O\bigl(t_n^{d(4p-(j+m+a-b)-1)}\bigr)\notag\\
	&=O\bigl(t_n^{d(4p-(u+v+r-l)-1)}\bigr)\,,
  \end{align}
	where the second relation follows from $0<t_n<1$ and the inequality $j+m+a-b\leq v+u+r-l$
{ (we observe that the authors of \cite{LRP2} actually deal with the rescaled measure $n\cdot\mu$, which is why they obtain another power of $n$ as a prefactor).} 
Now suppose that $(j,m,a,b)\in Q(v,u,r,l)\cap P$. We are going to repeatedly use \eqref{sg1} and Proposition \ref{regprop} for the following estimates:
{In case \textbf{(C1)} we have 
\begin{align*}
\frac{n^{4p+l-r-v-u}}{\sigma_n^4}\,\norm{g_{j}\star_a^{b}g_{m}}_{L^2(\mu^{ j+m-a-b})}^2 
&\lesssim\frac{n^{4p+l-r-v-u} t_n^{d(4p-u-v-r+l-1)}}{n^{2p} t_n^{d(2p-2)}}\\
&=n^{2p-(v+u+r-l)} t_n^{d(2p-(v+u+r-l)+1)}\\
&\leq (nt_n^d)^{2p-(v+u+r-l)} t_n^d\\
&\leq \Bigl(n^pt_n^{d(p-1)}\Bigr)^{-1}\,,
\end{align*}
where we have used that $v+u+r-l\leq 3p$ for the second inequality and, hence, under the assumptions of the theorem it follows that 
\begin{equation*}
 \frac{n^{4p+l-r-v-u}}{\sigma_n^4}\,\norm{g_{j}\star_a^{b}g_{m}}_{L^2(\mu^{ j+m-a-b})}^2 
\lesssim n^{-\delta(p-1)}\,.
\end{equation*}
In case \textbf{(C2)} we obtain 
\begin{align*}
\frac{n^{4p+l-r-v-u}}{\sigma_n^4}\,\norm{g_{j}\star_a^{b}g_{m}}_{L^2(\mu^{ j+m-a-b})}^2 
&\lesssim\frac{n^{4p+l-r-v-u} t_n^{d(4p-u-v-r+l-1)}}{n^{4p-4} t_n^{d(4p-6)}}\\
&=n^{4-(v+u+r-l)} t_n^{d(5-(v+u+r-l))}\\
& \leq nt_n^{2d}\,,
\end{align*}
where we have used that $(v+u+r-l)\geq3$. Hence, under the assumptions of the theorem it follows that 
\begin{equation*}
 \frac{n^{4p+l-r-v-u}}{\sigma_n^4}\,\norm{g_{j}\star_a^{b}g_{m}}_{L^2(\mu^{ j+m-a-b})}^2 \lesssim n^{-2\delta}\,.
\end{equation*}
In case \textbf{(C3)} we similarly obtain 
\begin{align*}
\frac{n^{4p+l-r-v-u}}{\sigma_n^4}\,\norm{g_{j}\star_a^{b}g_{m}}_{L^2(\mu^{ j+m-a-b})}^2 
&\lesssim\frac{n^{4p+l-r-v-u} t_n^{d(4p-u-v-r+l-1)}}{n^{4p-2} t_n^{d(4p-4)}}\\
&=n^{2-(v+u+r-l)} t_n^{d(3-(v+u+r-l))}\\
& \lesssim n^{-1}\,,
\end{align*}
where we have again used that $(v+u+r-l)\geq3$. Finally, in case \textbf{(C4)} we have 
\begin{align*}
\frac{n^{4p+l-r-v-u}}{\sigma_n^4}\,\norm{g_{j}\star_a^{b}g_{m}}_{L^2(\mu^{ j+m-a-b})}^2 
&\lesssim\frac{n^{4p+l-r-v-u} t_n^{d(4p-u-v-r+l-1)}}{n^2}\\
&= n^{-1}\bigl(nt_n^d)^{4p+l-r-v-u-1} \sim n^{-1}\rho^{4p+l-r-v-u-1}\\
&=O(n^{-1})\,.
\end{align*}
}
Eventually, we have to deal with the quadruples $(j,m,a,b)\in Q(v,u,r,l)\setminus P$.
In order to do this, we first remark that we have the asymptotic relations
\begin{align}
 \norm{g_m}_{L^2(\mu^m)}^2&\lesssim (t_n^d)^{2p-m-1}\,,\quad 1\leq m\leq p\quad\text{ and}\label{asrelrg1}\\
 \mu^p(\psi_{\Gamma,t_n}):=\int_{(\R^d)^p} \psi_{\Gamma,t_n} d\mu^p&\lesssim (t_n^d)^{p-1}\,.\label{asrelrg2}
\end{align}
Relation \eqref{asrelrg1} follows from the computation 
\begin{align*}
 &\norm{g_m}_{L^2(\mu^m)}^2=\int_{(\R^d)^m} g_m^2(x_1,\dotsc,x_m)\prod_{j=1}^m f(x_j)dx_j\\
 &=\int_{(\R^d)^m}\prod_{j=1}^m f(x_j)dx_j\int_{(\R^d)^{2p-2m}} \psi_{\Gamma,t_n}(x_1,\dotsc,x_p)\\
 &\hspace{3cm}\cdot\psi_{\Gamma,t_n}(x_1,\dotsc,x_m,z_{m+1},\dotsc,z_p) \prod_{l=m+1}^p f(x_l)f(z_l)dx_ldz_l\\
 &=\int_{(\R^d)^m}\prod_{j=1}^m f(x_j)dx_j\int_{(\R^d)^{2p-2m}} \psi_{\Gamma,1}\bigl(0,t_n^{-1}(x_1-x_2) \dotsc,t_n^{-1}(x_1-x_p)\bigr)\\
 &\hspace{2cm}\cdot\psi_{\Gamma,1}\bigl(0,t_n^{-1}(x_1-x_2),\dotsc,t_n^{-1}(x_1-x_m),t_n^{-1}(x_1-z_{m+1}),\dotsc,t_n^{-1}(x_1-z_{p})\bigr)\\
&\hspace{2cm} \cdot\prod_{l=m+1}^p f(x_l)f(z_l)dx_ldz_l\\
&=(t_n^d)^{2p-m-1}\int_{\R^d} f(x_1)dx_1\int_{(\R^d)^{m-1}}\prod_{j=2}^mf(x_1+t_ny_j)dy_j\\
&\hspace{3cm}\cdot\int_{\R^{2p-2m}}\prod_{l=m+1}^pf(x_1+t_nu_l)f(x_1+t_nv_l)du_ldv_l\\
&\hspace{3cm}\cdot\psi_{\Gamma,1}(0,y_2,\dotsc,y_m,u_{m+1},\dotsc,u_p)\psi_{\Gamma,1}(0,y_2,\dotsc,y_m,v_{m+1},\dotsc,v_p)\\
&\sim (t_n^d)^{2p-m-1}\int_{\R^d} f(x_1)^{2p-m}dx_1\int_{(\R^d)^{m-1}}\prod_{j=2}^mdy_j\int_{\R^{2p-2m}}\prod_{l=m+1}^pdu_ldv_l\\
&\hspace{3cm}\cdot\psi_{\Gamma,1}(0,y_2,\dotsc,y_m,u_{m+1},\dotsc,u_p)\psi_{\Gamma,1}(0,y_2,\dotsc,y_m,v_{m+1},\dotsc,v_p)\\
&\lesssim (t_n^d)^{2p-m-1}\,,
\end{align*}
where made use of the translation invariance and scaling property of the kernel $\psi_{\Gamma,t_n}$ as well as of the a.e.-continuity of $f$. The derivation of \eqref{asrelrg2} is similar but easier and is for this reason omitted.

First, if $a=b=0$ and $j,m\geq1$, then we have 
\begin{align*}
\norm{g_j\star_0^0 g_m}_{L^2(\mu^{j+m})}^2&=\norm{g_j}_{L^2(\mu^j)}^2\cdot\norm{g_m}_{L^2(\mu^m)}^2\\
&\lesssim (t_n^d)^{2p-j-1}(t_n^d)^{2p-m-1}=(t_n^d)^{4p-(j+m)-2}\,.
\end{align*}
Now note that by the definition of the set $Q(v,u,r,l)$ we further have that 
\[j+m=j+m-a-b\leq u+v-r-l\]
which implies that 
\begin{align*}
\norm{g_j\star_0^0 g_m}_{L^2(\mu^{j+m})}&\lesssim (t_n^d)^{4p-(u+v-r-l)-2}=(t_n^d)^{4p-(u+v+r-l)+2r-2}\\
&\leq (t_n^d)^{4p-(u+v+r-l)}\,,
\end{align*}
since $r\geq1$. If $a=b=j=m=0$, then we have 
\begin{align*}
\norm{g_0\star_0^0 g_0}_{L^2(\mu^{0})}^2&=\mu^p(\psi_{\Gamma,t_n})^4\lesssim (t_n^d)^{4p-4}\\
&\leq (t_n^d)^{4p-(u+v+r-l)-1}\,,
\end{align*}
which provides a bound of the same order as \eqref{sg1}. If $a=b=j=0$ and $m\geq 1$, then using $m=j+m-a-b\leq u+v-r-l$ and $r\geq1$,
\begin{align*}
\norm{g_0\star_0^0 g_m}^2_{L^2(\mu^{m})}&=\mu^p(\psi_{\Gamma,t_n})^2\norm{g_m}_{L^2(\mu^m)}^2\\
&\lesssim (t_n^d)^{2p-2}(t_n^d)^{2p-m-1}=(t_n^d)^{4p-m-3}\\
&\leq (t_n^d)^{4p-(u+v-r-l)-3}=(t_n^d)^{4p-(u+v+r-l)-3+2r}\\
&\leq (t_n^d)^{4p-(u+v+r-l)-1}\,,
\end{align*}
which again yields a bound of the same order as \eqref{sg1}.
The only remaining possibility is that $1\leq a=b=m=j\leq p-1$. In this case, we first claim that 
\[2j+1\leq u+v+r-l\,.\] 
Indeed, if $j<u$, then $2j<u+v\leq u+v+r-l$ since $j\leq v$. 
On the other hand, if $j=u$, then $j=v$ and we must also have $r=j$ and $l\leq r-1=j-1$ since $j=a\leq r\leq v=j$ and $0\leq l\leq u+v-r-1=j-1=r-1$. Hence, $u+v+r-l\geq 2j+r-l\geq 2j+1$. 
Thus, we obtain that 
\begin{align*}
\norm{g_j\star_j^j g_j}^2_{L^2(\mu^0)}&=\norm{g_j}_{L^2(\mu^j)}^4 \\
&\lesssim (t_n^d)^{2p-j-1}(t_n^d)^{2p-j-1}=(t_n^d)^{4p-(2j+1)-1}\\
&\leq (t_n^d)^{4p-(u+v+r-l)-1}
\end{align*}
which is the same bound as in \eqref{sg1}. Since all these bounds are at most the same as the bound in \eqref{sg1} we conclude that the above estimates continue to hold for all 
$(j,m,a,b)\in Q(v,u,r,l)\setminus P$.
Since the estimates just proven are independent of the variables $v,u,l$ and $r$ this implies that conditions (b)  and (c) of Theorem \ref{maintheo} are satisfied in the asserted cases.
\end{proof}

As announced, the following statement is a consequence of Theorem \ref{t:maincp} and provides a changepoint counterpart to the previous theorem, in the special case of edge counting.

\begin{corollary}\label{c:cprg} Under the above assumptions and notation, suppose that the sequence $\{t_n\}$ verifies condition {\bf (C1)} for $p=2$, and write $\sigma^2_n := \Var(G_n({\bf edge}))$. Then, if there is a $\delta>0$ such that $\displaystyle \Bigl(\frac{1}{n}\Bigr)^{2-\delta}\ll t_n^d\ll\frac{1}{n}$, the process $T_n := \{ T_n(t) : t\in [0,1]\}$ defined by 
\begin{eqnarray*}
T_n(t) &:=& \frac{1}{\sigma_n}\sum_{1\leq i\leq \lfloor nt \rfloor <j\leq n} \left( {\bf 1}_{\{ 0< \| X_i - X_j\| < t_n\} }- \eta_n\right),\\
&=&  \frac{1}{\sigma_n} \sum_{1\leq i\leq \lfloor nt \rfloor <j\leq n}  {\bf 1}_{\{ 0< \| X_i - X_j\| < t_n\} } - \frac{\eta_n}{\sigma_n}  \lfloor nt\rfloor (n - \lfloor nt\rfloor +1), 
\end{eqnarray*}
where $\eta_n := \P\left[0< \| X_1-X_2\| \leq t_n\right]$, is such that $T_n\Longrightarrow \sqrt{2}\, b$, where $b$ is a standard Brownian bridge.
\end{corollary}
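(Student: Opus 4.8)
The plan is to recognise $T_n$ as a renormalisation of the sequential two-sample $U$-process $Y_n$ from Section \ref{ss:cgpt} and then invoke Theorem \ref{t:maincp}, reconciling the two normalisations at the very end via Slutsky's lemma. Concretely, I would set $\psi^{(n)}(x,y):=\1_{\{0<\|x-y\|<t_n\}}-\eta_n$; since $\mu$ is absolutely continuous and the sphere $\{\|x-y\|=t_n\}$ is Lebesgue-null, one has $\eta_n=\E[\1_{\{0<\|X_1-X_2\|<t_n\}}]$, so $\psi^{(n)}$ is a symmetric, square-integrable and centred kernel of order $2$ with respect to $\mu^2$, and $Y_n(t)=\sum_{1\le i\le \lfloor nt\rfloor<j\le n}\psi^{(n)}(X_i,X_j)$, hence $T_n=Y_n/\sigma_n$. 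Because adding a constant to a kernel leaves all its Hoeffding components of order $\ge 1$ unchanged (this is immediate from \eqref{defpsis}, as $\sum_{l=0}^k(-1)^{k-l}\binom kl=0$ for $k\ge1$), the kernels $\psi_1^{(n)},\psi_2^{(n)}$ entering Theorem \ref{t:maincp} coincide with those attached to the \emph{uncentred} edge kernel $\psi_{\mathbf{edge},t_n}$. In particular Condition (I) of Theorem \ref{t:maincp} --- that $\psi_1^{(n)},\psi_2^{(n)}$ verify Conditions (b$'$), (c$'$) of Theorem \ref{maintheo2} with $p=2$ --- is exactly what the proof of Theorem \ref{graphtheo} establishes under regime \textbf{(C1)} together with the hypothesis $(1/n)^{2-\delta}\ll t_n^d\ll 1/n$ (there one verifies Conditions (b), (c) of Theorem \ref{maintheo}, which by Remark \ref{r:aa}(iii) imply (b$'$), (c$'$), and the latter involve only $\psi_1,\psi_2$).

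Next I would check Condition (II). Writing $\gamma_i(n)^2:=\|\psi_i^{(n)}\|^2$, the computations in the proof of Proposition \ref{regprop} (see \eqref{gksq}--\eqref{e:dk}) give $\gamma_1(n)^2=\Var(g_1(X_1))\lesssim(t_n^d)^2$ and $\Var(g_2(X_1,X_2))\sim d_2 t_n^d$ with $d_2\in(0,\infty)$; together with the order-$2$ ANOVA identity $\Var(g_2(X_1,X_2))=2\gamma_1(n)^2+\gamma_2(n)^2$ and $(t_n^d)^2=o(t_n^d)$, this yields $\gamma_2(n)^2\sim d_2 t_n^d$. Specialising \eqref{e:covy} to $s=t=1/2$ gives $\gamma_n^2=\Var(Y_n(1/2))\sim\tfrac14\bigl(n^2\gamma_2(n)^2+n^3\gamma_1(n)^2\bigr)$, and since $n^3\gamma_1(n)^2\big/\bigl(n^2\gamma_2(n)^2\bigr)\lesssim n t_n^d\to0$ under \textbf{(C1)}, one gets $\gamma_n^2\sim\tfrac14 n^2\gamma_2(n)^2$. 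Hence $n^{4-2}\gamma_2(n)^2/\gamma_n^2\to 4=:c_2^2$ and $n^{4-1}\gamma_1(n)^2/\gamma_n^2\to 0=:c_1^2$, which is Condition (II) with $c_1=0$, $c_2=2$. Theorem \ref{t:maincp} then gives $\widetilde Y_n=Y_n/\gamma_n\Longrightarrow c_1 A+c_2 b=2b$, where $b$ is a standard Brownian bridge.

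It remains to pass from $\widetilde Y_n$ to $T_n$. By \eqref{sigma} with $p=2$, $\sigma_n^2=\Var(G_n(\mathbf{edge}))=(n-1)^2 n\,\gamma_1(n)^2+\binom n2\gamma_2(n)^2\sim\tfrac12 n^2\gamma_2(n)^2$, again using $n^3\gamma_1(n)^2=o(n^2\gamma_2(n)^2)$; consequently $\gamma_n/\sigma_n\to 1/\sqrt2$. Writing $T_n=(\gamma_n/\sigma_n)\,\widetilde Y_n$ and applying Slutsky's lemma in $D[0,1]$ (multiplication by a deterministic scalar converging to $1/\sqrt2$ is continuous and preserves weak convergence), we conclude $T_n\Longrightarrow\tfrac1{\sqrt2}\cdot 2b=\sqrt2\,b$, as claimed.

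I expect the only genuine difficulty to be bookkeeping: one must carefully track the exact orders of $\gamma_1(n)^2$, $\gamma_2(n)^2$, $\gamma_n^2$ and $\sigma_n^2$ --- all of which are already contained in Proposition \ref{regprop} and formula \eqref{e:covy} --- so that the constants come out precisely as $c_1=0$, $c_2=2$ and $\gamma_n/\sigma_n\to 1/\sqrt2$, and one must check that the centring of $\psi^{(n)}$ does not disturb the hypotheses quoted from the proof of Theorem \ref{graphtheo}. No estimate beyond those already established in the paper is needed.
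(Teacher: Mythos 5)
Your proposal is correct and follows essentially the same route as the paper: the proof indicated in the text is precisely to invoke Theorem \ref{t:maincp} and to check, using the variance estimates from Proposition \ref{regprop} under regime \textbf{(C1)} with $p=2$, that $\gamma_n^2\sim\sigma_n^2/2$, $c_1=0$ and $c_2=2$, then to convert $\widetilde Y_n\Longrightarrow 2b$ into $T_n\Longrightarrow\sqrt2\,b$ via the deterministic ratio $\gamma_n/\sigma_n\to1/\sqrt2$. You have simply filled in the ``details left to the reader'' carefully, including the observation that centring the kernel leaves $\psi_1^{(n)},\psi_2^{(n)}$ and $\sigma_n^2$ unchanged, that Condition (I) of Theorem \ref{t:maincp} is covered by the verification of Theorem \ref{maintheo}(b),(c) in the proof of Theorem \ref{graphtheo} (via Remark \ref{r:aa}(iii) and Lemma \ref{genulemma}), and the ANOVA identity $\Var(g_2)=2\gamma_1(n)^2+\gamma_2(n)^2$ used to pin down the asymptotics of $\gamma_2(n)^2$.
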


The proof of Corollary \ref{c:cprg} (whose details are left to the reader) follows from the fact that, under the regime {\bf (C1)} and in the notation of Theorem \ref{t:maincp}, one has that $\gamma_n^2 \sim \sigma_n^2/2$, and also $c_1 = 0$ and $c_2 = 2$ --- in such a way that the limiting process $\sqrt{2}\, b$ exclusively emerges from the fluctuations of the second (degenerate) Hoeffding projections of the $U$-statistics $T_n(t)$, $t\in [0,1]$. Writing $k:= \lfloor nt \rfloor $ for a fixed $t$, the sum $$ S(n,t) := \sum_{1\leq i\leq \lfloor nt \rfloor <j\leq n}  {\bf 1}_{\{ 0< \| X_i - X_j\| \leq t_n\} }$$ counts the number of edges in $G(X; t_n)$ such that one endpoint  belongs to the set $\{X_1,..., X_k\}$ and the other belongs to $\{X_{k+1}, ..., X_n\}$; a small value of $S(n,t)$ implies that most distances between the elements of the two blocks of variables are larger than $t_n$. The random variable $S(n,t)$ is a special case of the family of $U$-statistics used for {\bf graph-based changepoint detection} defined e.g. in \cite[formula (3.1)]{CC19} (for $k=0$ and $G = G(X; t_n)$); we refer the reader to such a reference, as well as to the seminal contribution \cite{CZ15}, for an overview of changepoint analysis techniques based on the use of random geometric graphs. For testing procedures (see e.g. \cite{Ferger01, CH_book}), one is typically interested in understanding the asymptotic distribution of such quantities as
$$
M_n := \max_{t\in [0,1]}( - T_n(t)) , \quad \mbox{or} \quad A_n := \argmax_{t\in [0,1]} \, (-T_n(t)),
$$
where $\argmax_{t\in [0,1]} g(t)$ stands conventionally for the smallest maximizer of a function $g$ admitting a maximum\footnote{The domain of the $\argmax$ operator can be extended to $D[0,1]$ --- see e.g. \cite[p. 491]{Ferger01}.}. Corollary \ref{c:cprg} implies that $M_n$ and $A_n$ converge in distribution to $m:= \sqrt{2} \max b_t$  and $a:= \argmax b_t$, respectively. It is a well-known fact (see e.g. \cite{Ferger95, CH_book}) that $m/\sqrt{2}$ is distributed according to the Kolmogorov-Smirnov law, whereas $a$ is uniformly distributed on $[0,1]$. 

\begin{remark} {\rm The fact that $A_n$ converges in distribution to $\argmax b_t$ is justified by the observation that, according e.g. to \cite[Example 1.2]{Ferger99}, $b$ is a continuous Gaussian process having a.s. a unique maximizer in $[0,1]$, in such a way that the desired conclusion can be deduced by an application of the Continuous Mapping Theorem \cite[Theorem 2.7]{Bil} analogous to \cite[proof of Theorem 2.1]{Ferger01}. }

\end{remark}

More general limit theorems (involving in particular an independent process $A$, as in Theorem \ref{t:maincp}) could be obtained by considering an adequately renormalized version of $T_n$ under the remaining regimes {\bf (C2)}---{\bf (C4)}.

\subsection{$U$-statistics of order 2 with a dominant diagonal component}\label{ss:quadest}

\subsubsection{General statements}\label{sss:vv1}

In the paper \cite{van_der_vaart}, a remarkable collection of one-dimensional CLTs was proved for sequences of  $U$-statistics of order 2 displaying size-dependent kernels, as well as dominant non-linear Hoeffding components. The Gaussian fluctuations of the $U$-statistics considered in \cite{van_der_vaart} emerge asymptotically from the fact that the corresponding kernels tend to concentrate around a diagonal, a phenomenon leading to Gaussianity if one assumes some additional Lyapounov-type condition. The scope of the applications developed in \cite{van_der_vaart} covers e.g. the estimation of quadratic functionals of densities and regression functions, as well as the estimation of mean responses with missing data  (see Section \ref{sss:vv2} below, as well as \cite[Section 3]{van_der_vaart}, and \cite{BR88, L1, L2, LM00}). 

{}

Our aim in this section is to use our Theorem \ref{maintheo} in order to prove a functional version of the forthcoming Theorem \ref{t:vdv}, corresponding to a special (but fundamental) case of \cite[Theorem 2.1]{van_der_vaart}. Two explicit examples related to kernels based on wavelets and on Fourier bases, respectively, are studied in full detail in Section \ref{sss:vv2}.

{}

In order to state the announced results, we adopt a notation similar to \cite{van_der_vaart} and consider a sequence of i.i.d. random variables $\{X_i : i\geq 1\}$ with values in the measurable space $(E , \mathcal{E})$ and with common distribution $\mu$. We also consider a sequence $\{K_n : n \geq 1\} \subset L^2(\mu^2)$ of symmetric kernels 
$$
K_n : E\times E \to \mathbb{R} : (x,y)\mapsto K_n(x,y).
$$
For every $n$, we define the constant $\sigma_n$ and the processes $U_n = \{U_n(t) : t\in [0,1]\}$ and $W_n = \{W_n(t) : t\in [0,1]\}$ according to \eqref{e:ut0}---\eqref{sigma2}, in the special case $p=2$ and $\psi = K_n$, that is:
\begin{equation}\label{e:utex}
W_n(t) = \frac{\sum_{1\leq 1< i<j\leq \lfloor nt \rfloor} K_n(X_i, X_j) -  \lfloor nt \rfloor\left( \lfloor nt \rfloor-1\right) \E\,  K_n(X_1, X_2)} {\sigma_n}, \,\,t\in [0,1].
\end{equation}
We write $k_n :=\mathbb{E} \, K_n^2(X_1,X_2) = \|K_n\|^2_{L^2(\mu^2)}$, and assume that
\begin{equation}\label{e:ratio}
\frac{k_n}{n}\xrightarrow{n\to\infty}\infty,
\end{equation}
and moreover
\begin{equation}\label{assumption1}
\sup_n\sup_{\|f\|_{L^2(\mu)}=1}\int_{ E }\left(\int_{E} f(v)K_n(x,v)\mu(dv)\right)^2\mu(dx) = \sup_n \| K_n\|_{op}^2 <\infty,
\end{equation}
where $\|\bullet \|_{op}$ denotes the operator norm of the Hilbert-Schmidt operator\\ $f\mapsto \int_E K_n(\cdot ,y)f(y)\mu(dy)$, and
\begin{equation}\label{assumption2}
\|K_n\|_{\infty}\lesssim k_n.
\end{equation}

Assumptions \eqref{e:ratio} and \eqref{assumption1} are easily checkable conditions implying that the linear part of the Hoeffding decomposition of $W_n(1)$ vanishes in $L^2(\P)$ as $n\to \infty$. Assumption \eqref{assumption2} can be relaxed (see e.g. formula (10) and Lemma 2.1 in \cite{van_der_vaart}), but we decided to avoid such a level of generality --- which is not needed for the examples developed below --- in order to keep our paper within bounds.

\begin{theorem}[{\bf Theorem 2.1 and Lemma 2.1 in \cite{van_der_vaart}}] \label{t:vdv} 
Assume that there exists a sequence of finite measurable partitions $\mathcal{P}_n := \{\mathcal{X}_{n,m} : m=1,...,M_n\}$, $n\geq 1 $, of the set $E$, such that
\begin{align}
&\frac{1}{k_n}\sum_m\int_{\mathcal{X}_{n,m}}\int_{\mathcal{X}_{n,m}} K_n^2 \,  d\mu d\mu \xrightarrow{n\to\infty} 1, \label{assumpion3}\\
&\frac{1}{k_n}\max_m\int_{\mathcal{X}_{n,m}}\int_{\mathcal{X}_{n,m}} K_n^2 \, d\mu d\mu \xrightarrow{n\to\infty} 0,\label{assumpion4}\\
&\max_m \mu(\mathcal{X}_{n,m})\frac{k_n}{n}\xrightarrow{n\to\infty} 0,\label{assumption5}\\
&\liminf_{n\to\infty} n\min_m \mu(\mathcal{X}_{n,m})>0.\label{assumption6}
\end{align}
Then, as $n\to \infty$, one has that $\sigma^2_n \sim \frac{ n^2 k_n}{2}$ and 
$$
W_n(1) \stackrel{\rm Law}{\longrightarrow} Z,
$$
where $Z$ is a standard Gaussian random variable.
\end{theorem}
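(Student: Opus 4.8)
The plan is to peel off, in turn, the Hoeffding--linear part of $K_n$ and the part of $K_n$ supported away from the ``block diagonal'' determined by $\mathcal{P}_n$, and then to apply de Jong's fourth--moment CLT for degenerate symmetric $U$-statistics of order two (\cite{dej, dobler_peccati}) to what remains. We may clearly assume $\E[K_n(X_1,X_2)]=0$; write $g_1(x):=\E[K_n(x,X_2)]$ and let $\psi_2$ be the completely degenerate Hoeffding kernel of $K_n$, i.e. \eqref{defpsis} with $p=2$. First, by \eqref{sigma}--\eqref{sigma2}, $\sigma_n^2=\binom{n}{2}\bigl(2(n-2)\Var(g_1(X_1))+\|\psi_2\|_{L^2(\mu^2)}^2\bigr)$. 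Since $g_1$ is the image of the constant $\mathbf 1$ under the integral operator with kernel $K_n$, assumption \eqref{assumption1} gives $\Var(g_1(X_1))\le\|K_n\|_{op}^2=O(1)$, while orthogonality of the Hoeffding components yields $\|\psi_2\|_{L^2(\mu^2)}^2=k_n-2\Var(g_1(X_1))=k_n(1+o(1))$ because $k_n\to\infty$ by \eqref{e:ratio}. As $n/k_n\to 0$, the linear term is negligible and $\sigma_n^2\sim\binom n2 k_n\sim n^2k_n/2$. Moreover \eqref{e:ut} at $t=1$ reads $W_n(1)=\sigma_n^{-1}\bigl((n-1)J_1^{(n)}(\psi_1)+J_2^{(n)}(\psi_2)\bigr)$, and $\Var\bigl(\sigma_n^{-1}(n-1)J_1^{(n)}(\psi_1)\bigr)=\sigma_n^{-2}(n-1)^2 n\,\Var(g_1(X_1))=O(n/k_n)\to 0$; so, by Slutsky's lemma, it suffices to prove $F_n:=\sigma_n^{-1}J_2^{(n)}(\psi_2)\Longrightarrow Z$.

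\textbf{Reduction to a block-diagonal kernel.} Put $D_n:=\sum_m K_n\mathbf 1_{\mathcal X_{n,m}\times\mathcal X_{n,m}}$, $R_n:=K_n-D_n$, and let $\psi_2^D,\psi_2^R$ be the degenerate Hoeffding kernels of $D_n,R_n$, so that $\psi_2=\psi_2^D+\psi_2^R$ by linearity of the Hoeffding projection. Assumption \eqref{assumpion3} says precisely that $\|R_n\|_{L^2(\mu^2)}^2=k_n-\sum_m\int_{\mathcal X_{n,m}^2}K_n^2\,d\mu^2=o(k_n)$, hence $\|\psi_2^R\|_{L^2(\mu^2)}^2\le\|R_n\|_{L^2(\mu^2)}^2=o(k_n)$ and $\Var\bigl(\sigma_n^{-1}J_2^{(n)}(\psi_2^R)\bigr)=\binom n2\|\psi_2^R\|^2/\sigma_n^2=o(1)$. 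Thus $F_n=\sigma_n^{-1}J_2^{(n)}(\psi_2^D)+o_{L^2(\P)}(1)$, and it remains to establish the CLT for the degenerate $U$-statistic $\widehat F_n:=\sigma_n^{-1}J_2^{(n)}(\psi_2^D)$. By \cite{dej, dobler_peccati} (the maximal influence of a degenerate order-two $U$-statistic being automatically $O(1/\sqrt n)$) this holds once $\Var(\widehat F_n)\to 1$ and $\kappa_4(\widehat F_n)\to 0$, which in turn follows from the estimates: (i) $\|\psi_2^D\|_{L^2(\mu^2)}^2\sim k_n$; (ii) $\|\psi_2^D\star_1^1\psi_2^D\|_{L^2(\mu^2)}=o(k_n)$; (iii) $\|\psi_2^D\|_{L^4(\mu^2)}^4=o(n^2k_n^2)$; (iv) $\|\psi_2^D\star_2^1\psi_2^D\|_{L^2(\mu)}^2=o(nk_n^2)$. (Alternatively one may run a martingale CLT on the decomposition $\widehat F_n=\sigma_n^{-1}\sum_{j=2}^{n}\sum_{i<j}\psi_2^D(X_i,X_j)$ along the filtration $(\sigma(X_1,\dots,X_j))_j$; its hypotheses reduce to the same estimates.)

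\textbf{Verifying (i)--(iv) via the block structure.} For (i): $\|D_n\|_{L^2(\mu^2)}^2=\sum_m\int_{\mathcal X_{n,m}^2}K_n^2\sim k_n$ by \eqref{assumpion3}, and the Hoeffding-linear part of $D_n$ is $D_n\mathbf 1$, with $\|D_n\mathbf 1\|_{L^2(\mu)}\le\|D_n\|_{op}=\max_m\|K_n\mathbf 1_{\mathcal X_{n,m}^2}\|_{op}\le\bigl(\max_m\int_{\mathcal X_{n,m}^2}K_n^2\bigr)^{1/2}=o(\sqrt{k_n})$ by \eqref{assumpion4}; hence $\|\psi_2^D\|_{L^2(\mu^2)}^2\sim k_n$. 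For (ii): the supports $\mathcal X_{n,m}\times\mathcal X_{n,m}$ are disjoint and, since $\mathcal P_n$ is a partition, $\bigl(K_n\mathbf 1_{\mathcal X_{n,m}^2}\bigr)\star_1^1\bigl(K_n\mathbf 1_{\mathcal X_{n,m'}^2}\bigr)\equiv 0$ for $m\ne m'$; applying Lemma \ref{contrlemma}(v) blockwise,
\[
\|D_n\star_1^1 D_n\|_{L^2(\mu^2)}^2=\sum_m\bigl\|(K_n\mathbf 1_{\mathcal X_{n,m}^2})\star_1^1(K_n\mathbf 1_{\mathcal X_{n,m}^2})\bigr\|^2\le\sum_m\Bigl(\int_{\mathcal X_{n,m}^2}K_n^2\Bigr)^2\le\Bigl(\max_m\int_{\mathcal X_{n,m}^2}K_n^2\Bigr)\sum_m\int_{\mathcal X_{n,m}^2}K_n^2=o(k_n^2)
\]
by \eqref{assumpion3}--\eqref{assumpion4}, and the passage from $D_n$ to $\psi_2^D$ costs only $o(k_n)$. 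The crux for (iii)--(iv) is the \emph{uniform} bound $\sup_{y}\int_E\psi_2^D(x,y)^2\,\mu(dx)=o(nk_n)$: for $y\in\mathcal X_{n,m}$ this integral involves $K_n$ only on $\mathcal X_{n,m}$ up to the (uniformly small) linear and constant corrections, and $\int_{\mathcal X_{n,m}}K_n(x,y)^2\mu(dx)\le\|K_n\|_\infty^2\,\mu(\mathcal X_{n,m})$ together with $\|D_n\mathbf 1\|_\infty^2\le\|K_n\|_\infty^2\max_m\mu(\mathcal X_{n,m})$ are both $o(nk_n)$ by \eqref{assumption2} and \eqref{assumption5}. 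Then (iv) follows from
\[
\|\psi_2^D\star_2^1\psi_2^D\|_{L^2(\mu)}^2=\int_E\Bigl(\int_E\psi_2^D(x,y)^2\mu(dx)\Bigr)^2\mu(dy)\le\Bigl(\sup_{y}\int_E\psi_2^D(x,y)^2\mu(dx)\Bigr)\|\psi_2^D\|_{L^2(\mu^2)}^2=o(nk_n)\cdot O(k_n),
\]
and (iii) from $\|\psi_2^D\|_{L^4(\mu^2)}^4\lesssim\sum_m\int_{\mathcal X_{n,m}^2}K_n^4+\text{l.o.t.}\le\|K_n\|_\infty^2\sum_m\int_{\mathcal X_{n,m}^2}K_n^2+\text{l.o.t.}\lesssim k_n^3=o(n^2k_n^2)$, where $k_n=o(n^2)$ is itself forced by \eqref{assumption5}--\eqref{assumption6} (since $\max_m\mu(\mathcal X_{n,m})=o(n/k_n)$ while $\min_m\mu(\mathcal X_{n,m})\gtrsim 1/n$).

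\textbf{The main obstacle} is the \emph{ordering} of the reductions above. One cannot estimate $\|K_n\star_2^1 K_n\|$ or $\|K_n\|_{L^4(\mu^2)}$ for the full kernel, because the part of $K_n$ living off the block diagonal is controlled by \eqref{assumpion3} only in $L^2$-\emph{average}, whereas these quantities are sensitive to its sup norm, which is merely $\lesssim k_n$. The point is to first absorb that part into a \emph{variance-negligible} $U$-statistic, and only afterwards estimate contractions of the genuinely block-diagonal kernel $\psi_2^D$ --- for which each sample point interacts with its own block alone, making the uniform bound $\sup_y\int\psi_2^D(x,y)^2\mu(dx)=o(nk_n)$, and hence (iii)--(iv), available. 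Checking carefully that (i)--(iv) indeed match the hypotheses of the de Jong/Döbler--Peccati CLT (respectively those of the martingale CLT), and tracking the ``lower order'' corrections throughout, is where the remaining, more routine, work lies.
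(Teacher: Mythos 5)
The paper does not actually prove Theorem~\ref{t:vdv}; it is stated and used as an imported result, attributed to Theorem~2.1 and Lemma~2.1 of \cite{van_der_vaart}. The paper's contribution here is the \emph{functional} extension, Theorem~\ref{t:fvdv}, whose proof does the genuinely new work. So there is no ``paper's own proof'' to match your argument against; I will therefore assess your proof on its own merits and against the closest available model, namely the proof of Theorem~\ref{t:fvdv}.

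Your strategy --- split $K_n$ along the block diagonal $Q_n = \bigcup_m \mathcal X_{n,m}\times\mathcal X_{n,m}$, kill the off-diagonal part in $L^2$ via \eqref{assumpion3}, peel off the Hoeffding-linear part using $\sup_n\|K_n\|_{op}<\infty$ together with $k_n/n\to\infty$, and then verify the fourth-moment/contraction conditions for a de Jong--D\"obler--Peccati CLT on the remaining block-diagonal degenerate kernel --- is precisely the route taken both in the original reference and in the paper's own proof of the functional result, where $A_n = K_n\mathbf 1_{Q_n}$ and $B_n = K_n\mathbf 1_{Q_n^c}$ play the roles of your $D_n$ and $R_n$. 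The minor organizational difference is that you project to the degenerate Hoeffding component \emph{first} and then split, whereas the paper splits the \emph{full} kernel first and then analyses each piece; at $t=1$ these commute, so your ordering is fine here. Your estimates (i)--(iv) are the correct reduction: with $\sigma_n^2\sim n^2k_n/2$, the four quantities of Remark~\ref{r:whattocheck1} required to vanish become exactly (ii)--(iv) together with $\|\psi_2^D\star_1^0\psi_2^D\|=o(\sqrt n\,k_n)$, and the latter is absorbed by (iv) via Lemma~\ref{contrlemma}(iii), so nothing is missing. The block-diagonal contraction bound and the uniform bound $\sup_y\int\psi_2^D(\cdot,y)^2\,d\mu=o(nk_n)$ are both carried out correctly.

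Two small slips worth flagging, neither of which affects correctness. First, the maximal-influence quantity of a degenerate order-two $U$-statistic with i.i.d.\ summands is $O(1/n)$, not $O(1/\sqrt n)$: with $\sigma_n^2\sim\binom n2\|\psi_2\|^2$ and $\E[\psi_2(X_i,X_j)\mid X_i]=0$, each observation contributes at most $(n-1)\|\psi_2\|^2/\sigma_n^2 = O(1/n)$. Second, in the uniform bound you write $\|D_n\mathbf 1\|_\infty^2\le\|K_n\|_\infty^2\max_m\mu(\mathcal X_{n,m})$; the Cauchy--Schwarz step actually gives $\|D_n\mathbf 1\|_\infty^2\le\|K_n\|_\infty^2\bigl(\max_m\mu(\mathcal X_{n,m})\bigr)^2$, which is in fact a smaller quantity, $o(n^2)=o(nk_n)$ since $n=o(k_n)$, so the conclusion stands. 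The ``lower order'' corrections in (iii) (contributions of $g_1^D$ and $g_0^D$) do need to be written out, but your remarks at the end show you are aware of this; they are controlled exactly by the bounds $\|g_1^D\|_{L^2}=o(\sqrt{k_n})$, $\|g_1^D\|_\infty=o(n)$ that your proof already produces.
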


The main abstract result of the present section is the following functional version of the previous statement. 

\begin{theorem}\label{t:fvdv} Let the setting and assumptions of Theorem \ref{t:vdv} prevail. If, in addition, one has that
\begin{align}
&\sup_n n^{1/2+\epsilon_1}\max_m \mu(\mathcal{X}_{n,m})<\infty,\text{ for some }\epsilon_1>0, \label{assumption7}\\
&\sup_n\max_m\mu(\mathcal{X}_{n,m})\frac{k_n}{n^{1-\epsilon_2}}<\infty\textbf{ or }\liminf_{n\to\infty} n^{1-\epsilon_2}\min_m \mu(\mathcal{X}_{n,m})>0,\text{ for some }\epsilon_2>0,\label{assumption7.1}\\
&\sup_n\frac{n^{1+\alpha_1}}{k_n}<\infty\text{, for some }\alpha_1>0 \label{assumption8},\\
&\sup_n \frac{n^{\alpha_2}}{k_n}\int_E\int_E\left| K_n(x,y){\bf 1}_{\left\{ (x,y)\in\bigcap \left(\mathcal{X}_{n,m}\times\mathcal{X}_{n,m}\right)^c\right\}}\right|^2\mu(dx)\mu(dy)<\infty,\text{ for some } \alpha_2>0\label{assumption9},
\end{align}
then
$$
W_n\Longrightarrow \left\{B(t^2) : t\in [0,1] \,  \right \}, \quad n\to \infty,
$$
where $B$ is a standard Brownian motion issued from zero. 
\end{theorem}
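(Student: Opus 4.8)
The plan is to split $W_n$ into a ``diagonal'' and an ``off-diagonal'' part, discard the off-diagonal part by a Slutsky argument, and then apply Theorem~\ref{maintheo} with $p=2$ to the diagonal part. Put $K_n':=K_n\1_{\bigcup_m(\mathcal{X}_{n,m}\times\mathcal{X}_{n,m})}$ and $K_n'':=K_n-K_n'$, and write $W_n=W_n'+R_n$, where $W_n'$ and $R_n$ are the processes built as in \eqref{genW} with $K_n$ replaced by $K_n'$ and by $K_n''$ respectively, both still normalised by the same $\sigma_n$, so that the identity is exact. First I would show $R_n\Longrightarrow 0$ in $D[0,1]$. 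The crucial observation is that the Hilbert-Schmidt operator norm is non-increasing under restriction of the kernel to a product sub-block, and under the splitting $K_n=K_n'+K_n''$: hence $\|K_n'\|_{op}\le\|K_n\|_{op}$ and $\|K_n''\|_{op}\le 2\|K_n\|_{op}$, so by \eqref{assumption1} the linear Hoeffding projection of $K_n''$ has $L^2(\mu)$-norm $\lesssim 1$, while its quadratic projection has squared $L^2(\mu^2)$-norm $\lesssim k_n n^{-\alpha_2}$ by \eqref{assumption9}. Since the Hoeffding components of $J_2^{(\lfloor n\cdot\rfloor)}(K_n'')$ are martingales in the sample size, Doob's maximal inequality yields $\E[\sup_t|R_n(t)|^2]\lesssim\sigma_n^{-2}\bigl(n^3+n^2k_nn^{-\alpha_2}\bigr)\lesssim n/k_n+n^{-\alpha_2}\to 0$, using $\sigma_n^2\sim n^2k_n/2$ (justified below) and \eqref{e:ratio}.

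It then remains to prove $W_n'\Longrightarrow\{B(t^2):t\in[0,1]\}$. I would apply Theorem~\ref{maintheo} with $p=2$ to the $U$-process of $K_n'$, normalised by $\sqrt{\Var(J_2^{(n)}(K_n'))}$, which is $\sim\sigma_n$ so that the change of normalisation is harmless. Writing $g_1'$ and $g_2'=K_n'$ for the kernels \eqref{gk} associated with $K_n'$, the same operator-norm bound gives $\Var(g_1'(X_1))\le\|K_n'\|_{op}^2\lesssim 1$, while $\Var(g_2'(X_1,X_2))=\|K_n'\|_{L^2(\mu^2)}^2-(\E K_n')^2\sim k_n$ by \eqref{assumpion3}; plugging these into \eqref{sigma2} gives $\Var(J_2^{(n)}(K_n'))\sim n^2k_n/2\sim\sigma_n^2$ (re-deriving the variance asymptotics of Theorem~\ref{t:vdv}), and Condition~(a) of Theorem~\ref{maintheo} holds with $b_1^2=\lim_n n^3\sigma_n^{-2}\Var(g_1')=0$ and $b_2^2=\lim_n n^2\sigma_n^{-2}\Var(g_2')=2$. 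Hence $\alpha_{1,2}^2=b_1^2=0$ and $\alpha_{2,2}^2=b_2^2/2=1$, so the candidate limit in \eqref{e:limsum} is exactly $B(t^2)$.

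The substance is Conditions~(b) and~(c) of Theorem~\ref{maintheo}, which for $p=2$ reduce to the finite list in Remark~\ref{r:whattocheck} (read with $g_1=g_1'$, $g_2=K_n'$). Each term there is controlled by combining two ingredients. The first is the operator-norm bound $\|K_n'\|_{op}\lesssim 1$: it directly handles $\|g_1'\|_{L^2(\mu)}$, $|\E K_n'|$, and all contractions of type $g_1'\star_1^1 K_n'$ and $K_n'\star_1^1 K_n'$, since these are, respectively, the image of $g_1'$ under the operator with kernel $K_n'$ and the kernel of the square of that operator, so that their $L^2$-norms are $\lesssim\|K_n'\|_{op}\|g_1'\|_{L^2(\mu)}\lesssim 1$ and $\lesssim\|K_n'\|_{op}\|K_n'\|_{L^2(\mu^2)}\lesssim k_n^{1/2}$. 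The second is the block structure of $K_n'$: every remaining contraction splits as $\sum_m\int_{\mathcal{X}_{n,m}\times\mathcal{X}_{n,m}}(\cdots)$, and a Cauchy-Schwarz step in the integrated variable gains a factor $\mu(\mathcal{X}_{n,m})$, after which \eqref{assumpion3} restores the total mass $\sim k_n$. The ``convergence to zero'' requirements then follow from \eqref{assumption5}, \eqref{assumpion3} and \eqref{assumption8} (so $k_n\gtrsim n^{1+\alpha_1}$); the ``boundedness'' (tightness) requirements, which concern $\|g_1'\|_{L^4(\mu)}^2\lesssim k_n\max_m\mu(\mathcal{X}_{n,m})$ and $\|K_n'\|_{L^4(\mu^2)}^2\lesssim k_n^{3/2}$, follow from \eqref{assumption7} (which gives $\max_m\mu(\mathcal{X}_{n,m})\lesssim n^{-1/2-\epsilon_1}$) and from \eqref{assumption7.1} together with \eqref{assumption6} (which together force $k_n=O(n^{2-\epsilon_2})$), so that a single admissible $\epsilon$ can be chosen for all of them. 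Theorem~\ref{maintheo} then gives $W_n'\Longrightarrow B(t^2)$, and with the first step $W_n\Longrightarrow B(t^2)$.

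I expect the main obstacle to be precisely this verification of (b) and (c), for two reasons. First, the crude bound $\|K_n\|_\infty\lesssim k_n$ is hopelessly lossy on its own, and one genuinely needs the restriction-stability of the operator norm — to keep the linear Hoeffding projection at scale $O(1)$ on the diagonal part and to annihilate the off-diagonal part — together with the quantitative fineness of the partitions encoded in \eqref{assumption7}--\eqref{assumption9}; recognising that these combine exactly the right way is the conceptual heart. Second, the tightness estimates in Condition~(c) must carry a genuine polynomial decay $n^{-\epsilon}$, not mere boundedness, so one has to track powers of $n$ and of $k_n$ carefully, term by term, and confirm that the finitely many admissibility constraints on $\epsilon$ coming from $\epsilon_1,\epsilon_2,\alpha_1,\alpha_2$ are simultaneously satisfiable; this bookkeeping is where most of the routine work lies.
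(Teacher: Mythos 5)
Your proposal follows the paper's architecture: split $K_n$ into the diagonal part $A_n := K_n\mathbf{1}_{\bigcup_m\mathcal{X}_{n,m}\times\mathcal{X}_{n,m}}$ and the off-diagonal part $B_n := K_n - A_n$, write $W_n = T_n + R_n$, show $R_n \Longrightarrow 0$, and apply Theorem~\ref{maintheo} (via the checklist of Remark~\ref{r:whattocheck}) to the diagonal process $T_n$. For the latter, the ingredients you invoke are exactly the paper's --- the operator-norm stability $\|A_n\|_{op}\lesssim 1$ (your observation that a block-diagonal restriction $\sum_m P_m K_n P_m$ can only decrease the operator norm is in fact cleaner than the paper's inequality chain leading to \eqref{e:op}), the sup bound \eqref{assumption2}, block-wise Cauchy--Schwarz with \eqref{assumpion3}, the spectral treatment of $g_1\star_1^1 g_2$ and $g_2\star_1^1 g_2$, and the size bounds $\max_m\mu(\mathcal{X}_{n,m})\lesssim n^{-1/2-\epsilon_1}$ and $k_n = O(n^{2-\epsilon_2})$ from \eqref{assumption7}, \eqref{assumption7.1}, \eqref{assumption5}, \eqref{assumption6} --- and the term-by-term verification and the resulting $b_1^2 = 0$, $b_2^2 = 2$, hence $\alpha_{1,2}=0$, $\alpha_{2,2}=1$, go through as you sketch. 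Where you genuinely diverge is in the treatment of $R_n$. The paper establishes tightness via Lemma~\ref{lemma_tightness}, using \eqref{assumption8} to extract a polynomial modulus $\bigl((\lfloor nt\rfloor-\lfloor ns\rfloor)/n\bigr)^{1+\alpha_1}$ for the linear Hoeffding component and \eqref{assumption9} for the quadratic one, and combines this with $\E[R_n(t)^2]\to 0$ for the marginals. You instead observe that the (centered, degenerate) Hoeffding components of $R_n$ are martingales in the running index and apply Doob's maximal inequality, yielding $\E[\sup_{t\in[0,1]}|R_n(t)|^2]\lesssim n/k_n + n^{-\alpha_2}\to 0$ directly; this is stronger, shorter, and for this particular step only needs \eqref{e:ratio} and \eqref{assumption9}, not the full \eqref{assumption8}. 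One small item to make explicit when you write this out: the linear component $R_n'(t) = \frac{\lfloor nt\rfloor - 1}{\sigma_n}\sum_{j\le\lfloor nt\rfloor}\psi_1(X_j)$ is not itself a martingale because of the time-dependent prefactor $\lfloor nt\rfloor - 1$; one must first bound that prefactor by $n$ and apply Doob to the inner centered partial sum $\sum_{j\le N}\psi_1(X_j)$.
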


As we will point out in Remark \ref{r:roles} below each of the four assumptions \eqref{assumption7}, \eqref{assumption7.1}, \eqref{assumption8} and \eqref{assumption9}  plays a substantially different role in the proof.

\begin{proof}[Proof of Theorem \ref{t:fvdv}]  Using the notation introduced in this section, for every $n$ we define $$Q_n := \bigcup_{m=1}^{M_n} \mathcal{X}_{m,n} \times \mathcal{X}_{m,n} \subset E^2,$$ and write $Q_n^c = \bigcap_{m=1}^{M_n}( \mathcal{X}_{m,n} \times \mathcal{X}_{m,n} )^c $ in order to denote the complement of $Q_n$ in $E^2$. We set $A_n := K_n {\bf 1}_{Q_n}$ and $B_n := K_n -A_n  = K_n {\bf 1}_{Q_n^c}$, and define
$ T_n := \{T_n(t) : t\in [0,1]\}$ and $R_n := \{R_n(t): t\in [0,1]\}$ as
\begin{eqnarray*}
T_n(t) &:=& \frac{\sum_{1\leq 1< i<j\leq \lfloor nt \rfloor} A_n(X_i, X_j) -  \lfloor nt \rfloor\left( \lfloor nt \rfloor-1\right) \E\,  A_n(X_1, X_2)} {\sigma_n}, \\ 
R_n(t) &:=&  \frac{\sum_{1\leq 1< i<j\leq \lfloor nt \rfloor} B_n(X_i, X_j) -  \lfloor nt \rfloor\left( \lfloor nt \rfloor-1\right) \E\,  B_n(X_1, X_2)} {\sigma_n},
\end{eqnarray*}
in such a way that $W_n = T_n+R_n$. Our first remark is that, for every $f\in L^2(\mu)$ with unit norm, one has that
\begin{eqnarray*}
&& \int_{ E }\left(\int_{E} f(v)A_n(x,v)\mu(dv)\right)^2\mu(dx) = \sum_{m=1}^{M_n} \int_{ \mathcal{X}_{m,n} }\left(\int_{ \mathcal{X}_{m,n}} f(v)K_n(x,v)\mu(dv)\right)^2\mu(dx) \\
&& \leq \sum_{m=1}^{M_n} \int_{ E }\left(\int_{ \mathcal{X}_{m,n}} f(v)K_n(x,v)\mu(dv)\right)^2\mu(dx)\leq \|K_n \|_{op} \sum_{m=1}^{M_n} \int_{\mathcal{X}_{m,n}} \hspace{-2mm}f(x)^2 \mu(dx) =  \|K_n \|_{op},
\end{eqnarray*}
from which we infer that
\begin{equation}\label{e:op}
\sup_n  \|A_n\|_{op},\,\, \sup_n \|B_n\|_{op} <\infty,
\end{equation}
where we have applied \eqref{assumption1} and the triangle inequality in order to deal with $B_n$. Using the identity \eqref{sigma2} (in the case $\psi = B_n$) together with \eqref{e:op}, with the relations $\sigma_n^2 \sim n^2k_n/2$ and $n/k_n\to 0$ and with \eqref{assumpion3}, shows immediately that, as $n\to \infty$, $\E[R_n(t)^2] = o(\sigma^2_n)$ for every $t\in [0,1]$. This in turn implies that $\sigma_n^2\sim \Var (T_n(1))$. We will now study $R_n$ and $T_n$ separately, and prove that
\begin{itemize}
\item[\rm (i)] the sequence $\{R_n : n\geq 1\}$ is tight in $D[0,1]$, so that $R_n \Longrightarrow 0$ (zero function of $D[0,1]$);

\item[\rm (ii)] the sequence $\{T_n : n\geq 1\}$ verifies the assumptions of Theorem \ref{maintheo} in the case $p=2$, with $\alpha_{1,2} = 0$ and $\alpha_{2,2} = 1$, and therefore $T_n$ weakly converges to $B(t^2)$ in $D[0,1]$.  

\end{itemize}
\noindent[\underline{Proof of (i)}] We first define the functions $g_0, g_1, g_2$ according to \eqref{gk}, in the case $p=2$ and $\psi = B_n$, so that the Hoeffding decomposition of the $U$-statistic $R_n(t)$, $t\in [0,1]$, is
\begin{align*}
R_n(t) &= \frac{\nt - 1}{\sigma_n} \sum_{j=1}^{\nt}[g_1(X_j) - g_0] +\frac{1}{\sigma_n} \sum_{1\leq i< j\leq \nt}[g_2(X_i, X_j) - g_1(X_i) - g_1(X_j)+g_0] \\
&:= R'_n(t) + R''_n(t).
\end{align*}
Now fix $0\leq s< t\leq 1$. Then, writing $g_1 - g_0 :=  \psi_1$, as before, and using as always the symbol $c$ in order to denote an absolute finite constant whose exact value might change from line to line,
\begin{align*}
& \E\left |R'_n(t) -R'_n(s)\right|^2 \\
 \leq& \frac{c}{\sigma_n^2} \left\{ \E \left| \ns \sum_{\ns < j\leq \nt} \psi_1(X_j)\right|^2 + \E \left| (\nt - \ns) \sum_{1\leq j\leq \nt} \psi_1(X_j) \right|^2\right\} \\
 := &\frac{c}{\sigma_n^2} (Y_1+Y_2).
\end{align*}
We can assume without loss of generality that $\alpha_1\in (0,1]$; we have that
\begin{align}\label{tightness1}
\frac{Y_1}{\sigma_n^2} \leq \frac{c}{k_n} \E\left[ \sum_{\ns < j\leq \nt} \psi_1(X_j)^2 \right]
\leq & \frac{c}{k_n} \big( \nt - \ns \big)^{1+{\alpha_1}} \E |\psi_1(X_1)| ^2\notag\\ \leq  &c \left( \frac{ \nt - \ns}{n} \right)^{1+{\alpha_1}},
\end{align}
where we have used \eqref{assumption1} and \eqref{assumption8} to deduce the last inequality. Analogously, one shows that
\begin{eqnarray}\label{tightness2}
\frac{Y_2}{\sigma_n^2} &\leq& c \left( \frac{ \nt - \ns}{n} \right)^{2} \frac{n}{k_n}  \E |\psi_1(X_j)|^2 \leq c \left( \frac{ \nt - \ns}{n} \right)^{1+{\alpha_1}},
\end{eqnarray}
where the last inequality follows again from \eqref{assumption1} and from \eqref{e:ratio}, as well as from the fact that $\frac{ \nt - \ns}{n}\in [0,1]$. 

{}

\noindent 
In order to deal with $R''$, we adopt as before the notation $\psi_2(X_i, X_j) := g_2(X_i, X_j) - g_1(X_i) - g_1(X_j)+g_0$, and observe that, for every $a\geq 1$, $\E[| \psi_2(X_i, X_j)|^a ] \leq c \int_E\int_E | B_n |^a\,  d\mu^2$, for some absolute constant $c$ depending solely on $a$. For every $n$ and every $0\leq s < t \leq 1$, we define the set of integers
\begin{eqnarray*}
H_n(s,t) &:=& \{ (i,j) : 0\leq i<j\leq \nt\} \backslash   \{ (i,j) : 0\leq i<j\leq \ns\}.
\end{eqnarray*}
Clearly, $|H_n(s,t)| ={\lfloor nt\rfloor \choose 2}-{\lfloor ns\rfloor \choose 2}=\frac{1}{2}(\lfloor nt\rfloor +\lfloor ns\rfloor -1)(\lfloor nt\rfloor -\lfloor ns\rfloor)$. For fixed $0\leq s < t \leq 1$, one has that
$$
\E\left |R''_n(t) -R''_n(s)\right|^2 = \frac{1}{\sigma_n^2}\,  \E\left| \sum_{(i,j)\in H_n(s,t) }\psi_2(X_i, X_j)\right|^2 .
$$
In order to bound such a quantity, we use orthogonality of the summands in the above sum, fix an $\alpha_2>0 $ such that condition \eqref{assumption9} is satisfied, and note that
\begin{align}
\E\left |R''_n(t) -R''_n(s)\right|^2&=\frac{1}{\sigma_n^2}\sum_{(i_1,i_2)\in H_n(s,t)}\E\left[\psi_2(X_{i_1},X_{i_2})^2\right]\nonumber\\
&\leq\frac{c}{\sigma_n^2}|H_n(s,t)| \int_E\int_E B_n^2\,  d\mu^2\nonumber\\
&\leq \frac{c\int_E\int_E B_n^2\,  d\mu^2}{k_n}\cdot\frac{\lfloor nt\rfloor -\lfloor ns\rfloor}{n}\nonumber\\
&\leq \frac{cn^{\alpha_2}\int_E\int_E B_n^2\,  d\mu^2}{k_n}\cdot\left(\frac{\lfloor nt\rfloor -\lfloor ns\rfloor}{n}\right)^{1+\alpha_2}.\label{tightness3}
\end{align}
We have therefore shown (in \eqref{tightness1}, \eqref{tightness2} and \eqref{tightness3}) that $\{R_n\}$ satisfies the tightness criterion of Lemma \ref{lemma_tightness}, for $\alpha = \min\left({\alpha_1},\alpha_2\right)$ and $\beta=2$, and the proof of Point (i) is concluded.

{}

\noindent[\underline{Proof of (ii)}] In this part of the proof, we denote by $g_0,g_1, g_2$ the functions obtained from \eqref{gk} by selecting $p=2$ and $\psi = A_n$. Note that each of the three kernels $g_i$ implicitly depends on $n$ and that, by virtue of \eqref{e:op}, one has 
\begin{equation}\label{e:pivot} \sup_n \left( \E [ g_1(X_1)^2]+ | g_0 | \right) <\infty.
\end{equation}
Since $g_2 = A_n$, $\sigma^2_n \sim k_n n^2/2$ and \eqref{assumpion3} is in order, we see immediately that the constants $b_1 $ and $b_2$ appearing at Point (a) of Theorem \ref{maintheo} are such that $b_1 = 0$ and $b_2^2= 2$, yielding $\alpha_{1,2} = 0$ and $\alpha_{2,2} = 1$. In order to conclude our proof, we have now to check that the quantities appearing at Points 1.-6. of Remark \ref{r:whattocheck} all converge to zero as $n\to \infty$  and that the quantities in points i)-iv) of the same remark are bounded for some $\epsilon>0$. This is immediately done for the quantities at Points 1., i) and ii), by virtue of \eqref{e:pivot}. To deal with the quantity at Point iii), we note that, for some $\epsilon>0$, 
\begin{align*}
&\frac{n^{5/2+\epsilon}}{\sigma_n^2}\|g_1\|_{L^4(\mu)}^2\\
&\sim\frac{n^{1/2+\epsilon}}{k_n}\left[\int_E\left(\int_E K_n(x,y)\, \mathbf{1}\left[(x,y)\in\bigcup_m \mathcal{X}_{n,m}\times\mathcal{X}_{n,m}\right]\mu(dy)\right)^4\mu(dx)\right]^{1/2}\\
&=\frac{n^{1/2+\epsilon}}{k_n}\left[\sum_m\int_{\mathcal{X}_{n,m}}\left(\int_{\mathcal{X}_{n,m}}K_n(x,y)\mu(dy)\right)^4\mu(dx)\right]^{1/2}\\
&\leq \frac{n^{1/2+\epsilon}}{k_n}\left[\sum_m\int_{\mathcal{X}_{n,m}}\left(\int_{\mathcal{X}_{n,m}}K_n(x,y)\mu(dy)\right)^2\left(\int_{\mathcal{X}_{n,m}}k_n\mu(dy)\right)^2\mu(dx)\right]^{1/2},\,\text{by (\ref{assumpion3})}\\
&\leq n^{1/2+\epsilon}\max_m\mu(\mathcal{X}_{n,m})\left[\sum_m\int_{\mathcal{X}_{n,m}}\left(\int_{\mathcal{X}_{n,m}}K_n(x,y)\mu(dy)\right)^2\mu(dx)\right]^{1/2}\\
&\lesssim n^{1/2+\epsilon}\max_m\mu(\mathcal{X}_{n,m}),\quad\text{by (\ref{e:op})},
\end{align*}
which is bounded by \eqref{assumption7}. On the other hand,
\begin{align*}
&\frac{n^{2}}{\sigma_n^2}\|g_1\star_1^0 g_2\|_{L^2}=\frac{n^2}{\sigma_n^2}\left(\int_E \int_E \left(g_1(x)g_2(x,y)\right)^2\mu(dx)\mu(dy)\right)^{1/2}\\
&\sim\frac{1}{k_n}\Bigg\{\int_E\int_E\left( \int_E K_n(x,z){\bf 1}\left[(x,z)\in\bigcup_m \mathcal{X}_{n,m}\times\mathcal{X}_{n,m}\right]\mu(dz) \right.\\
&\hspace{1cm}\cdot\left.K_n(x,y){\bf 1}\left[(x,y)\in\bigcup_m \mathcal{X}_{n,m}\times\mathcal{X}_{n,m}\right]\right)^2\mu(dx)\mu(dy)\Bigg\}^{1/2}\\
&=\frac{1}{k_n}\left[\sum_m \int_{\mathcal{X}_{n,m}}\int_{\mathcal{X}_{n,m}}\left(\int_{\mathcal{X}_{n,m}}K_n(x,z)\mu(dz)\right)^2K_n^2(x,y)\mu(dx)\mu(dy)\right]^{1/2}\\
&\stackrel{(\ref{assumption2})}\leq \frac{1}{k_n}\left[\sum_m \int_{\mathcal{X}_{n,m}}\int_{\mathcal{X}_{n,m}}\left(\int_{\mathcal{X}_{n,m}}K_n(x,z)\mu(dz)\right)^2k_n^2\,\mu(dx)\mu(dy)\right]^{1/2}\\
&\leq \sqrt{\max_m\mu(\mathcal{X}_{n,m})}\left[\sum_m\int_{\mathcal{X}_{n,m}}\left(\int_{\mathcal{X}_{n,m}}K_n(x,z)\mu(dz)\right)^2\,\mu(dx)\right]^{1/2}\xrightarrow{n\to\infty}0,
\end{align*}
by (\ref{assumption7}) and (\ref{e:op}), showing that the quantity at Point 2. vanishes. We can deal at once with the quantities at Point 3. and 5. by means of the following considerations. For a fixed $n$, denote by $\{\lambda_j : j\geq 1\}$ and $\{e_j : j\geq 1\}$, respectively, the sequence of eigenvalues (taken in decreasing order) and eigenfunctions of the Hilbert-Schmidt operator on $L^2(\mu)$ given by $f\mapsto \int_E A_n (\cdot, y) f(y)\mu(dy)$. Then, such eigenfunctions form an orthonormal system in $L^2(\mu)$, and one has that $A_n = g_2 = \sum_i \lambda_i e_i\otimes e_i$, with convergence in $L^2(\mu^2)$. Such a relation yields that $\| A_n\|_{op} = \lambda_1$, $g_2\star_1^1 g_2 = \sum_i \lambda^2_i e_i\otimes e_i$, $g_1= \sum_i \lambda_i \mu_ i e_i$ (where $\mu_i := \int_E e_i d\mu$) and $g_1\star_1^1g_2 = \sum_i\lambda_i ^2\mu_i e_i$. Since $|\mu_i | \leq 1$ (by Cauchy-Schwarz), we infer that
$$
\|g_1\star_1^1g_2\|_{L^2(\mu)},\, \|g_2\star_1^1g_2\|_{L^2(\mu^2)}  \leq \sqrt{\sum_i \lambda_i^4} \leq \|A_n\|_{op}\|A_n\|_{L^2(\mu^2)},
$$
and the desired convergence to zero follows from \eqref{e:ratio}, \eqref{assumpion3} and \eqref{e:op}. The vanishing of the quantity at Point 4. follows from
\begin{align*}
&\frac{n^{3/2}}{\sigma_n^2}\|g_2\star_1^0 g_2\|_{L^2}\\
\sim&\frac{1}{n^{1/2}k_n}\left(\sum_m\int_{\mathcal{X}_{n,m}}\int_{\mathcal{X}_{n,m}}\int_{\mathcal{X}_{n,m}} \hspace{-2mm}K^2_n(x,y)K^2_n(x,z)\mu(dx)\mu(dy)\mu(dz)\right)^{1/2} \\
\leq& \frac{1}{n^{1/2}k_n}\Bigg\{\sum_m\int_{\mathcal{X}_{n,m}}\int_{\mathcal{X}_{n,m}}\left(\int_{\mathcal{X}_{n,m}} K^4_n(x,y)\mu(dx)\right)^{1/2}\\
&\hspace{4cm}\cdot\left(\int_{\mathcal{X}_{n,m}}K^4_n(x,z)\mu(dx)\right)^{1/2}\mu(dy)\mu(dz)\Bigg\}^{1/2}\\
=&\frac{1}{n^{1/2}k_n}\left\lbrace\sum_m \left(\int_{\mathcal{X}_{n,m}}\left(\int_{\mathcal{X}_{n,m}}K^4_n(x,z)\mu(dx)\right)^{1/2}\mu(dz)\right)^2\right\rbrace^{1/2}\\
\leq& \left\lbrace\frac{1}{k_n^2}\max_m\frac{\mu(\mathcal{X}_{m,n})}{n}\sum_m \int_{\mathcal{X}_{n,m}}\int_{\mathcal{X}_{n,m}}K^4_n(x,z)\mu(dx)\mu(dz)\right\rbrace^{1/2}\xrightarrow{n\to\infty}0,
\end{align*}
where we have applied \eqref{assumption2} and \eqref{assumption5}. One has also that, for some $\epsilon>0$, 
\begin{align*}
&\frac{n^{1+\epsilon}}{\sigma_n^2}\|g_2\|^2_{L^4(\mu)}\sim\frac{1}{n^{1-\epsilon}k_n}\left\lbrace\sum_m\int_{\mathcal{X}_{n,m}}\int_{\mathcal{X}_{n,m}}K_n^4(x,y)\mu(dx)\mu(dy)\right\rbrace^{1/2}\\\
&=n^{\epsilon}\left\lbrace \frac{1}{k_n^2}\max_m\frac{\mu(\mathcal{X}_{n,m})}{n}\sum_m\int_{\mathcal{X}_{n,m}}\int_{\mathcal{X}_{n,m}}K_n^4(x,y)\mu(dx)\mu(dy)\right\rbrace^{1/2}\hspace{-0.3cm} \frac{1}{n^{1/2}\max_m\mu(\mathcal{X}_{n,m})^{1/2}}
\end{align*}
is bounded, by \eqref{assumption2}, \eqref{assumpion3}, \eqref{assumption5}, \eqref{assumption6} and \eqref{assumption7.1} -- this yields the boundedness of the sequence at Point iv). Finally, the convergence to zero of the quantity at Point 6. is a direct consequence of  \eqref{e:ratio} and \eqref{assumpion3}.

\end{proof}

\begin{remark}\label{r:roles}{\rm  By inspection of the previous proof, one sees that Assumptions \eqref{assumption8} and \eqref{assumption9} imply that the off-diagonal part of the $U$-process $W_n$ is tight in the space $D[0,1]$. On the other hand, Assumptions \eqref{assumption7} and \eqref{assumption7.1} are needed in order to ensure that the (dominating) diagonal component of $W_n$ meets the requirements of Theorem \ref{maintheo}. Note that Assumption \eqref{assumption7} is such that (a) it does not appear in \cite{van_der_vaart}, and (b) it would be needed if one wanted to prove a one-dimensional CLT for $W_n(1)$ by using the techniques developed in \cite{DP18}. This slight discrepancy between the assumptions of \cite{DP18} and \cite{van_der_vaart} is explained by the fact that the sufficient conditions discovered in \cite{DP18} would imply not only a CLT for $W_n(1)$, but also that $\E[W_n(1)^4]\to 3$, and consequently need to be stronger. 
}
\end{remark}

\subsubsection{Two examples}\label{sss:vv2}

As an application of Theorem \ref{t:fvdv}, we will consider two families of kernels satisfying the set of sufficient conditions for functional convergence pointed out in the previous section. As explained in \cite[Section 3]{van_der_vaart} both types of $U$-statistics can be used in the non-parametric estimation of quadratic functional of densities -- see also \cite{BR88, LM00}

\begin{itemize}

\item[(I)] ({\it Wavelet-based kernels}) 
Following \cite[Section 4.1]{van_der_vaart}, we consider expansions of functions $f\in L_2(\mathbb{R}^d)$ on an orthonormal basis of compactly supported, bounded wavelets of the form
$$f(x)=\sum_{j\in\mathbb{Z}^d}\sum_{v\in\lbrace 0,1\rbrace ^d}\left<f,\psi_{0,j}^v\right>\psi_{0,j}^v(x)+\sum_{i=0}^{\infty}\sum_{j\in\mathbb{Z}^d}\sum_{v\in\lbrace 0,1\rbrace ^d\setminus\lbrace 0\rbrace}\left<f,\psi_{i,j}^v\right>\psi_{i,j}^v(x).$$
The functions $\psi_{i,j}^v$ are orthogonal for different indices $(i,j,v)$ and given by scaled and translated versions of the $2^d$ base functions $\psi_{0,0}^v$:
$$\psi_{i,j}^v(x)=2^{id/2}\psi_{0,0}^v(2^ix-j).$$
We concentrate on functions $f$ with support in $E=[0,1]^d$. As noted in \cite[Section 4.1]{van_der_vaart}, for each resolution level $i$ and vector $v$, only the order $2^{id}$ elements $\psi_{i,j}^v$ are nonzero in $E$. We denote the corresponding set of indices $j$ by $J_i$. We then truncate the expansion at the level of resolution $i=I$ and look at the kernel
$$K_n(x,y)=\sum_{j\in J_0}\sum_{v\in\lbrace 0,1\rbrace^d}\psi_{0,j}^v(x)\psi_{0,j}^v(y)+\sum_{i=0}^I\sum_{j\in J_i}\sum_{v\in\lbrace 0,1\rbrace^d\setminus\lbrace 0\rbrace}\psi_{i,j}^v(x)\psi_{i,j}^v(y).$$

\item[(II)] ({\it Kernels based on Fourier expansions}) 
Any function $f\in L_2[-\pi,\pi]$ can be represented through the Fourier series $f=\sum_{j\in\mathbb{Z}}f_je_j$ for $e_j(x)=e^{ijx/\sqrt{2\pi}}$ and $f_j=\int_{-\pi}^{\pi}fe_jd\lambda$, where $\lambda$ is the Lebesgue measure. We can write $f_k=\sum_{|j|\leq k}f_je_j$ to obtain an orthogonal projection of $f$ onto a $2k+1$-dimensional space. Assuming that $k$ depends on $n$, we can also write down the corresponding kernel as:
$$K_n(x,y)=\sum_{|j|\leq k}e_j(x)e_j(y)=\frac{\sin\left(\left(k+\frac{1}{2}\right)(x-y)\right)}{2\pi\sin\left(\frac{1}{2}(x-y)\right)}$$
and note that $K_n(x,y)=D_k(x-y)$, where $D_k$ is the well-known Dirichlet kernel.

\end{itemize}

\begin{theorem} Let the above assumption and notation prevail.

 \begin{enumerate}

\item[\rm 1.] Let $\mu$ be any probability measure on $[0,1]^d$ with a Lebesgue density that is bounded and bounded away from zero. The sequence of wavelet-based kernels $\{ K_n : n\geq 1\}$ defined at Point {\rm (I)} above satisfies the assumptions of Theorem \ref{t:vdv}, with respect to $\mu$, as soon as $n\ll k_n\ll n^2$, for $k_n=2^{Id}$. Moreover, a sufficient condition for such a sequence to satisfy the assumptions of Theorem \ref{t:fvdv} is $n^{1+\gamma_1}\lesssim k_n\lesssim n^{2-\gamma_2}$, for some $\gamma_1,\gamma_2>0$.
\item[\rm 2.] Let $\mu$ be any measure on $\mathbb{R}$ with a bounded Lebesgue density and $k_n=2k+1$. The sequence of Fourier-based kernels $\{ K_n : n\geq 1\}$ defined at Point {\rm (II)} above satisfies the assumptions of Theorem \ref{t:vdv} as soon as $n\ll k_n\ll n^2$. In addition, a sufficient condition for such a sequence to meet the assumptions of Theorem \ref{t:fvdv} is $n^{1+\eta_1}\lesssim k_n\lesssim n^{2-\eta_2}$, for some $\eta_1,\eta_2>0$.

\end{enumerate}

\end{theorem}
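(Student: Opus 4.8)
The plan is, for each of the two families, to verify the hypotheses of Theorem \ref{t:vdv} together with the four additional conditions \eqref{assumption7}--\eqref{assumption9} of Theorem \ref{t:fvdv}; the two functional convergences then follow at once from Theorem \ref{t:fvdv}. The first assertion in each part --- that the kernels fit the framework of Theorem \ref{t:vdv} --- is essentially contained in \cite[Section~4]{van_der_vaart}, so the real work concerns the additional conditions. The structural feature I would exploit is that in both cases $K_n$ is (the restriction to $E\times E$ of) the reproducing kernel of a finite-dimensional subspace $V_n$ of $L^2(\lambda)$ ($\lambda$ denoting Lebesgue measure, and modulo standard boundary corrections in the wavelet construction), i.e. $K_n(x,y)=\sum_{\ell}e_\ell(x)\overline{e_\ell(y)}$ for an $L^2(\lambda)$-orthonormal system $\{e_\ell\}$ with $\dim V_n\asymp k_n$ (indeed $\dim V_n\asymp 2^{Id}$ for the wavelet space up to resolution $I$, and $\dim V_n=2k+1$ for the trigonometric polynomials of degree $\leq k$; in particular $k_n=\E\,K_n^2(X_1,X_2)\asymp\dim V_n$). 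Writing $\rho$ for the bounded density of $\mu$, three facts are then immediate: (i) estimating the Hilbert--Schmidt operator $f\mapsto\int K_n(\cdot,y)f(y)\,d\mu(y)$ by factoring through $L^2(\lambda)$ gives $\|K_n\|_{op}\leq\|\rho\|_\infty$, so \eqref{assumption1} holds with room to spare; (ii) $|K_n(x,y)|\leq K_n(x,x)^{1/2}K_n(y,y)^{1/2}$ by Cauchy--Schwarz, while the local finiteness of $\{e_\ell\}$ yields $\sup_x K_n(x,x)\asymp k_n$, hence $\|K_n\|_\infty\lesssim k_n$, which is \eqref{assumption2}; and (iii) by the compact support of the wavelets at each resolution $i$ (support of diameter $\asymp 2^{-i}$), respectively by the closed form of the Dirichlet kernel, one has the sharp off-diagonal decay $|K_n(x,y)|\lesssim\min\bigl(k_n,\,|x-y|^{-d}\bigr)$ in the wavelet case and $|K_n(x,y)|=|D_k(x-y)|\lesssim\min\bigl(k_n,\,\|x-y\|^{-1}\bigr)$ in the Fourier case, where $\|u\|$ is the distance from $u$ to $2\pi\mathbb{Z}$.

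Next I would fix the partition. In both cases take $\mathcal{P}_n$ to consist of $M_n\asymp n$ blocks of $\mu$-measure of exact order $1/n$: dyadic subcubes of $[0,1]^d$ of side $\asymp n^{-1/d}$ in the wavelet case (so that $\mu(\mathcal{X}_{n,m})\asymp 1/n$, using that $\rho$ is bounded and bounded away from zero), and the $n$ consecutive $\mu$-quantile intervals in the Fourier case (so that $\mu(\mathcal{X}_{n,m})=1/n$ exactly, which requires only that $\mu$ be atomless). Since $k_n\gg n$, these blocks are much coarser than the intrinsic scale of $K_n$ (which is $\asymp k_n^{-1/d}$, resp. $\asymp k_n^{-1}$), and this coarseness is what makes the cross-block $L^2$-mass of $K_n$ small. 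For this partition, the conditions \eqref{assumpion3}--\eqref{assumption6} of Theorem \ref{t:vdv} hold as in \cite{van_der_vaart} (or directly: \eqref{assumpion3} follows from the cross-block energy bound established in the next paragraph; \eqref{assumpion4} from $\int K_n(x,y)^2\,d\mu(y)\leq\|K_n\|_{op}\,K_n(x,x)\lesssim k_n$ combined with $\max_m\mu(\mathcal{X}_{n,m})\asymp 1/n$; \eqref{assumption5} reduces to $k_n\ll n^2$; and \eqref{assumption6} holds since $n\min_m\mu(\mathcal{X}_{n,m})\asymp 1$). The three remaining conditions then follow by bookkeeping with the stated polynomial gaps: \eqref{assumption7} holds for any $\epsilon_1\in(0,1/2)$ because $n^{1/2+\epsilon_1}\max_m\mu(\mathcal{X}_{n,m})\asymp n^{\epsilon_1-1/2}$; \eqref{assumption7.1}, via its first alternative, holds for some $\epsilon_2>0$ because $\max_m\mu(\mathcal{X}_{n,m})\,k_n/n^{1-\epsilon_2}\asymp k_n/n^{2-\epsilon_2}\lesssim n^{\epsilon_2-\gamma_2}$ (using $k_n\lesssim n^{2-\gamma_2}$); and \eqref{assumption8} holds for any $\alpha_1\in(0,\gamma_1]$ because $n^{1+\alpha_1}/k_n\lesssim n^{\alpha_1-\gamma_1}$ (using $k_n\gtrsim n^{1+\gamma_1}$). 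The Fourier case is handled identically, with $(\gamma_1,\gamma_2)$ replaced by $(\eta_1,\eta_2)$.

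The crux will be \eqref{assumption9}: I must show that the cross-block energy $\int_E\int_E K_n(x,y)^2\mathbf{1}_{Q_n^c}\,d\mu(x)\,d\mu(y)$, where $Q_n^c=\bigcap_m(\mathcal{X}_{n,m}\times\mathcal{X}_{n,m})^c$, is not merely $o(k_n)$ --- which is exactly \eqref{assumpion3} --- but polynomially smaller than $k_n$. Here I would insert the sharp pointwise bound from step (iii) and carry out a geometric integral: decomposing $Q_n^c$ according to $\eta:=|x-y|$, and using that a pair $(x,y)$ lying in distinct blocks with $|x-y|=\eta$ forces $x$ to lie within distance $\eta$ of the block skeleton (whose $(d-1)$-dimensional Lebesgue measure is $\asymp n^{1/d}$ in the wavelet case, and which consists of $n$ points in the Fourier case), one integrates $\min(k_n^2,\eta^{-2d})$ (resp. $\min(k_n^2,\eta^{-2})$) against the resulting density to obtain $\int\int_{Q_n^c}K_n^2\,d\mu^2\lesssim n^{1/d}k_n^{(d-1)/d}\log(k_n/n)=k_n(n/k_n)^{1/d}\log(k_n/n)$ in the wavelet case (the Fourier estimate being the $d=1$ instance of this, namely $\lesssim n\log(k_n/n)$). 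Multiplying by $n^{\alpha_2}/k_n$ and using $k_n\gtrsim n^{1+\gamma_1}$ (resp. $n^{1+\eta_1}$) then gives a bound $\lesssim n^{\alpha_2-\gamma_1/d}\log n$ (resp. $n^{\alpha_2-\eta_1}\log n$), which stays bounded for any $\alpha_2\in(0,\gamma_1/d)$ (resp. $\alpha_2\in(0,\eta_1)$). With all hypotheses verified, Theorem \ref{t:fvdv} yields $W_n\Longrightarrow\{B(t^2):t\in[0,1]\}$ in both cases.

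The step I expect to be the main obstacle is precisely this last one, the power-law improvement over \eqref{assumpion3}. It rests on matching two ingredients carefully: the pointwise decay $|K_n(x,y)|\lesssim\min(k_n,|x-y|^{-d})$ (which genuinely uses the compact support of the wavelets at each scale, respectively the explicit form of $D_k$), and the fact that, because $k_n\gg n$ --- indeed $k_n\gtrsim n^{1+\gamma_1}$ --- the partition can be chosen much coarser than the kernel's own length scale, so that the pairs in distinct blocks that still ``see'' a large value of $K_n$ are confined to a thin tube around the block boundaries whose $\mu$-measure decays like a positive power of $n$. The remaining bookkeeping (steps (i)--(iii) and the verification of \eqref{assumption7}, \eqref{assumption7.1} and \eqref{assumption8}) is routine.
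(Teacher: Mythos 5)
Your proof is correct and reaches the same conclusion, but it takes a materially different route in the one place that matters, namely the verification of condition \eqref{assumption9}, and it also makes a different choice of partition. You take $M_n\asymp n$ blocks of $\mu$-measure $\asymp 1/n$ in both cases, so that \eqref{assumption7}, \eqref{assumption7.1} and \eqref{assumption8} reduce to one-line polynomial comparisons; the paper instead keeps the partition coupled to the kernel scale (choosing $M_n=k_n^{1/2}$ for wavelets and $M_n\asymp\delta^{-1}$ with $\delta\asymp n^{3\alpha_2/2}/k_n^{1/2}$ for Fourier), which forces the small exponent bookkeeping at the end of the Fourier part. For \eqref{assumption9} you extract a unified off-diagonal bound $|K_n(x,y)|\lesssim\min(k_n,|x-y|^{-d})$ (exact scaling of the Dirichlet kernel; compactly supported multiresolution structure for wavelets) and integrate it against the measure of the tube around the block skeleton, getting $\int\int_{Q_n^c}K_n^2\,d\mu^2\lesssim k_n(n/k_n)^{1/d}\log(k_n/n)$ with the Fourier case as the instance $d=1$; the paper instead lifts the cube-counting argument from van der Vaart directly (for wavelets, the kernel vanishes identically beyond the finest dyadic scale, so no decay estimate is needed, and the tail-integration $\log$ in your bound is in fact spurious, though harmless) and, in the Fourier case, runs the two-part split into $\{|x-y|>\epsilon\}$ plus boundary triangles with an auxiliary parameter $\epsilon$ that then has to be wedged between $\delta$ and $k_n^{-1/2}$. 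What your approach buys is a single argument handling both families symmetrically with a transparent geometric picture and no auxiliary tuning; what the paper's approach buys is staying as close as possible to the verification already performed in \cite[Propositions 4.1 and 4.2]{van_der_vaart}. Both yield bounded sequences in \eqref{assumption9} under $n^{1+\gamma_1}\lesssim k_n\lesssim n^{2-\gamma_2}$, respectively $n^{1+\eta_1}\lesssim k_n\lesssim n^{2-\eta_2}$, and hence both conclude by invoking Theorem \ref{t:fvdv}.
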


\begin{proof}
\begin{enumerate}
\item
For $n\ll k_n\ll n^2$ and $K_n$ defined in point {\rm (I)} above, the assumptions of Theorem \ref{t:vdv} are verified in \cite[Proposition 4.1]{van_der_vaart}. The authors note that, by assumption, each function $\psi_{I,j}^v$ is supported within a set of the form $2^{-I}(C+j)$ for a given cube $C$ that depends on the type of the wavelet, for any $v$. They take $\mathcal{X}_{n,m}$ to be blocks (cubes) of $l_n^d$ adjacent cubes $2^{I}(C+j)$, giving $M_n=O(k_n/l_n^d)$ sets $\mathcal{X}_{n,m}$. In order for the assumptions (\ref{assumption1})-(\ref{assumption6}) to be satisfied, the authors require that 
\begin{enumerate}[a)]
\item $M_n\to\infty$, 
\item $M_n\lesssim n$, 
\item $\frac{M_n}{k_n}\to 0$, 
\item $M_n^{-1}k_n/n\to 0$. 
\end{enumerate}
Now assume $n^{1+\gamma_1}\lesssim k_n\lesssim n^{2-\gamma_2}$ for some $\gamma_1,\gamma_2>0$. Condition \eqref{assumption8} is then automatically satisfied.
As noted in the proof of \cite[Proposition 4.1]{van_der_vaart}, $\mu(\mathcal{X}_{m,n})$ is of order $\frac{1}{M_n}$. Now, it is also noted in the proof of \cite[Proposition 4.1]{van_der_vaart} that, if $K_n(x_1,x_2)\neq 0$ then there exists some $j$ such that $x_1,x_2\in 2^{-I}(C+j)$. Moreover, the set of $(x_1,x_2)$ in the complement of $\bigcup_m\mathcal{X}_{n,m}\times\mathcal{X}_{n,m}$ where $K_n(x_1,x_2)\neq 0$ is contained in the union $U$ of all cubes $2^{-I}(C+j)$ that intersect the boundary of some $\mathcal{X}_{n,m}$. It is also noted that the number of such cubes is of order $M_n^{1/d}k_n^{1-1/d}$ and that $\mu\left(2^{-I}(C+j)\right)\lesssim \frac{1}{k_n}$. Therefore, using $\|K_n\|_{\infty}\lesssim k_n$, we note that, for any $\alpha_2>0$,
\begin{align*}
&\frac{n^{\alpha_2}}{k_n}\int\int\left| K_n(x,y){\bf 1}\left[(x,y)\in\bigcap \left(\mathcal{X}_{n,m}\times\mathcal{X}_{n,m}\right)^c\right]\right|^2\mu(dx)\mu(dy)\\
\lesssim &\frac{n^{\alpha_2}k_n^2}{k_n} M_n^{1/d} k_n^{1-1/d}\left(\frac{1}{k_n}\right)^2\\
=& n^{\alpha_2}\left(\frac{M_n}{k_n}\right)^{1/d}.
\end{align*}
Condition \eqref{assumption9} requires this quantity to be bounded for some $\alpha_2>0$. It will indeed be bounded for $\alpha_2\leq\frac{1}{2d}$ if we choose $M_n=k_n^{1/2}$. Moreover, for $M_n=k_n^{1/2}$, $\frac{M_n}{k_n}\to 0$. Also, $\frac{k_n}{M_nn}=\frac{k_n^{1/2}}{n}\to 0$, as $k_n\ll n^2$. Under the same assumption, $M_n\to\infty$ and $n^{1/2+\gamma_1/2}\lesssim M_n\lesssim n^{1-\gamma_2/2}\lesssim n$ and so  conditions  \eqref{assumption7} and \eqref{assumption7.1} are also satisfied (as $\mu(\mathcal{X}_{m,n})$ is of order $\frac{1}{M_n}$). Therefore all the conditions a), b), c), d) from above, as well as conditions \eqref{assumption7}-\eqref{assumption9}, are satisfied. This finishes the proof.

\item For $n\ll k_n\ll n^2$, the assumptions (\ref{assumption1})-(\ref{assumption6}) for kernel $K_n$ of Point {\rm (II)} are verified in \cite[Proposition 4.2]{van_der_vaart}. The authors take a partition $(-\pi, \pi]=\bigcup_m\mathcal{X}_{n,m}$ in $M_n=\frac{2\pi}{\delta}$ intervals of length $\delta$ for $\delta\to 0$ for $\frac{1}{\sqrt{k_n}}\ll \delta\ll \frac{n}{k_n}$ and introduce an $\epsilon>0$ such that $\frac{1}{\sqrt{k_n}}\ll \epsilon\ll\delta$.

Now, assume that $n^{1+\eta_1}\lesssim k_n\lesssim n^{2-\eta_2}$, for some $\eta_1,\eta_2>0$. This makes condition (\ref{assumption8}) readily satisfied.
In order for condition (\ref{assumption7}) to be satisfied, we require that $n^{1/2+\epsilon_1}\delta$ is bounded for some $\epsilon_1>0$. Moreover, condition \eqref{assumption7.1} will be satisfied if $\delta\lesssim \frac{n^{1-\epsilon_2}}{k_n}$ for some $\epsilon_2>0$.

The authors of \cite{van_der_vaart} note that the complement of $\bigcup_m\mathcal{X}_{n,m}\times \mathcal{X}_{n,m}$ is contained in $\lbrace (x_1,x_2):|x_1-x_2|>\epsilon\rbrace$ except for the set of $2(M_n-1)$ triangles indicated in \cite[Figure 3]{van_der_vaart}. Now, by the argument of the proof of \cite[Proposition 4.2]{van_der_vaart}, for any $\alpha_2>0$,
\begin{align*}
\frac{n^{\alpha_2}}{k_n}\int_{-\pi}^{\pi}\int_{-\pi}^{\pi}{\bf 1}[|x-y|>\epsilon]\left| K_n(x,y)\right|^2\mu(dx)\mu(dy)\lesssim \epsilon n^{\alpha_2}+\frac{n^{\alpha_2}}{\epsilon^2 k_n},
\end{align*}
which is bounded, if $\frac{n^{\alpha_2/2}}{k_n^{1/2}}\lesssim\epsilon\lesssim n^{-\alpha_2}$.
Each of the remaining triangles in the complement of $\bigcup_m\mathcal{X}_{n,m}\times\mathcal{X}_{n,m}$ has sides of length of order $\epsilon$. Hence, for a typical triangle $\Delta$ and an interval $I$ of length of the order $\epsilon$,
\begin{align}\label{triangles}
\frac{n^{\alpha_2}}{k_n}\int\int_{\Delta} \left|K_n(x,y)\right|^2dx dy\underset{u=x-y}{\stackrel{v=y}\lesssim} \frac{n^{\alpha_2}}{k_n}\int_I \int_0^{\epsilon}|D_n(u)|^2du dv\lesssim \frac{n^{\alpha_2}}{k_n}\epsilon k_n=\epsilon n^{\alpha_2}.
\end{align}
There are $2(M_n-1)$ such triangles.
Therefore, condition \eqref{assumption8} will be satisfied if, in addition to  $\frac{n^{\alpha_2/2}}{k_n^{1/2}}\lesssim\epsilon\lesssim n^{-\alpha_2}$, $M_n\epsilon  n^{\alpha_2}=\frac{2\pi\epsilon n^{\alpha_2}}{\delta}$ is bounded, i.e. $\epsilon n^{\alpha_2}\lesssim \delta$.

Summing up, conditions \eqref{assumption1} - \eqref{assumption9} are satisfied if, for some $\delta,\epsilon,\epsilon_1,\epsilon_2,\alpha_2>0$,
$$\frac{n^{3\alpha_2/2}}{k_n^{1/2}}\lesssim \epsilon n^{\alpha_2}\lesssim\delta\lesssim\min\left(\frac{1}{n^{1/2+\epsilon_1}},\frac{n^{1-\epsilon_2}}{k_n}\right).$$
Such choices of $\delta,\epsilon,\epsilon_1,\epsilon_2,\alpha_2>0$ exist. Indeed, let $\alpha_2=\frac{\min(\eta_1,\eta_2)}{6}$, \\$\epsilon_1=\frac{1}{2}\left(\eta_1-\frac{\min(\eta_1,\eta_2)}{2}\right)$, $\epsilon_2=\frac{1}{2}\left(\eta_2-\frac{\min(\eta_1,\eta_2)}{2}\right)$. Then, under the assumption $n^{1+\eta_1}\lesssim k_n\lesssim n^{2-\eta_2}$, we have that $\frac{n^{3\alpha_2/2}}{k_n^{1/2}}\lesssim\min\left(\frac{1}{n^{1/2+\epsilon_1}},\frac{n^{1-\epsilon_2}}{k_n}\right)$ and it suffices to choose $\delta=\frac{n^{3\alpha_2/2}}{k_n^{1/2}}$ and $\epsilon=\frac{n^{\alpha_2/2}}{k_n^{1/2}}.$ This finishes the proof.

\end{enumerate}
\end{proof}

\section{Technical results and proofs of main statements }\label{s:proofs}

Unless otherwise specified, for the rest of the section we adopt the same conventions and notation put forward in Section \ref{setting}.

\subsection{A new product formula}

We start by proving a new product formula for symmetric $U$-statistics with arguments of possibly different sizes. 
In order to state it, we need to recall the Hoeffding decomposition of not necessarily symmetric kernel functions: Let $f\in L^1(\mu^p)$. Then, $f$ can be decomposed as follows: For all $(x_1,\dotsc,x_p)\in E^p$ one has 
\begin{align}\label{HDnonsym}
 f(x_1,\dotsc,x_p)&=\sum_{J\subseteq[p]} f_J\bigl((x_i)_{i\in J}\bigr)\,,
\end{align}
where we follow the convention that in $(x_i)_{i\in J}$ the coordinates $i$ appear in increasing order, i.e. if $J=\{i_1,\dotsc,i_k\}$ with $k=|J|$ and $1\leq i_1<\ldots<i_k\leq p$, then $(x_i)_{i\in J}=(x_{i_1},\dotsc,x_{i_k})$. 
The kernels $f_J$, $J\subseteq[p]$, are given by 
\begin{align}\label{HDnonsym2}
 f_J\bigl((x_i)_{i\in J}\bigr)&=\sum_{K\subseteq J} (-1)^{|J|-|K|}\int_{E^{p-|K|}}f(x_1,\dotsc,x_p)d\mu^{p-|K|}\bigl((x_i)_{i\in[p]\setminus K}\bigr)
\end{align}
and they are \textit{canonical} with respect to $\mu$ in the sense that for each $\emptyset\not=J\subseteq p$ with $|J|=k$, each $j\in J$ and all $(x_i)_{i\in J\setminus\{j\}}\in E^{|J|-1}$ one has that 
\begin{align}\label{can}
 \int_E f_J\bigl(x_{i_1},\dotsc,x_{i_{l-1}},y,x_{i_{l+1}},\dotsc,x_{i_k}\bigr)d\mu(y)=0\,,
\end{align}
where we again suppose that $J=\{i_1,\dotsc,i_k\}$, $1\leq i_1<\ldots<i_k\leq p$ and where $i_l=j$. For a detailed discussion and proofs of these facts we refer the reader to \cite[Chapter 9]{Major}. Note that, if the kernel $f$ is symmetric as in 
Section \ref{setting}, then we can define the (symmetric) functions $g_k$, $0\leq k\leq p$ by 
\[g_k(y_1,\dotsc,y_k)=\int_{E^{p-k}}f\bigl(y_1,\dotsc,y_k,x_1,\dotsc,x_{p-k}\bigr)d\mu^{p-k}(x_1,\dotsc,x_{p-k})\]
as before and we obtain that, for every subset $J\subseteq[p]$ with $1\leq k:=|J| \leq p$, 
\begin{align*}
 f_J(x_1,\dotsc,x_k)&=\sum_{l=0}^s (-1)^{k-l}\sum_{1\leq i_1<\ldots<i_l\leq k} g_l\bigl(x_{i_1},\dotsc,x_{i_l}\bigr)=f_k(x_1,\dotsc,x_k)\,,
\end{align*}
where the symmetric and degenerate kernel $f_k$ has been defined in \eqref{defpsis}.

For the statement of our product formula we have to fix some more notation: Let us fix two positive integers $p$ and $q$. Then, for nonnegative integers $l,n,m,r$ such that $n\leq m$, 
$r\leq p\wedge q$, $l\geq p+q-2r$ and sets $L\subseteq[m]$ with $|L|=l$, we denote by $\Pi_{r,n,m}(L)$ the collection of all triples 
\[(A,B,C)\in\D_{2r+l-p-q}(n)\times \D_{p-r}(n)\times \D_{q-r}(m)\]  
such that $L$ is the disjoint union of $A$, $B$ and $C$.

\begin{proposition}[\bf Product formula]\label{pform}
Let $p,q\geq1$ be positive integers and assume that $\psi\in L^2(\mu^{ p})$ and $\phi\in L^2(\mu^{ q})$ are degenerate, symmetric kernels of orders $p$ and $q$ respectively. Moreover, let $n\geq p$ and $m\geq q$ be positive integers with $m\geq n$.
Then, whenever $n\geq p+q$ we have the Hoeffding decomposition:
\[J_p^{(n)}(\psi) J_q^{(m)}(\phi)=\sum_{\substack{M\subseteq[m]:\\\abs{M}\leq p+q}} U_M=\sum_{k=|p-q|}^{p+q}\sum_{s=0}^{q\wedge k\wedge(m-n)}\sum_{\substack{M\subseteq[m]:\\\abs{M}=k,\\ \abs{M\cap\{n+1,\dotsc,m\}}=s}} U_M\,,\]
where, for a set $M \subseteq[m]$ with $0\leq k:=|M|\leq p+q$ and $0\leq s:=\abs{M\cap\{n+1,\dotsc,m\}}\leq q\wedge k\wedge(m-n)$, the Hoeffding component $U_M$ is given by 
\begin{align}\label{prodform}
U_M&= \sum_{r=\lceil \frac{p+q-k}{2}\rceil}^{p\wedge (q-s)\wedge(p+q-k)}\binom{n-k+s}{p+q-r-k}\notag\\
&\hspace{3cm}\cdot\sum_{(A,B,C)\in\Pi_{r,n,m}(M)} \bigl(\psi\star_r^{p+q-r-k}\phi\bigr)_M\bigl((X_i)_{i\in A}, (X_i)_{i\in B}, (X_i)_{i\in C}\bigr)\,.
\end{align}
Moreover, for such an $M$, we further have the bound 
\begin{align}\label{varum}
 &\sqrt{\Var(U_M)}\notag\\
\leq & \hspace{-3mm}\sum_{r=\lceil \frac{p+q-k}{2}\rceil}^{p\wedge (q-s)\wedge(p+q-k)}\binom{n-k+s}{p+q-r-k}\binom{k-s}{2r+k-p-q,p-r,q-r-s} \|\psi\star_r^{p+q-r-k}\phi\bigr\|_{L^2(\mu^{k})}\,.
\end{align}

\end{proposition}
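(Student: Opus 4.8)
The plan is to start from the identity $J_p^{(n)}(\psi)\,J_q^{(m)}(\phi)=\sum_{I\in\D_p(n)}\sum_{J\in\D_q(m)}\psi(X_i,i\in I)\,\phi(X_j,j\in J)$ and to reorganise the right-hand side, first according to the overlap size $r:=\abs{I\cap J}$ and then according to the index set carrying the resulting canonical component. Since $n\ge p+q$, for any $I\in\D_p(n)$ and $J\in\D_q(m)$ the overlap $r$ ranges over $\{0,1,\dots,p\wedge q\}$ with no extra lower constraint, and all the binomial coefficients arising below will be non-negative integers. The crux of the argument is a pointwise identity for the Hoeffding decomposition (in the sense of \eqref{HDnonsym}--\eqref{HDnonsym2}) of a single product $\psi(X_i,i\in I)\phi(X_j,j\in J)$, regarded as a function of the $p+q-r$ variables indexed by $I\cup J$.

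To establish this identity, write $K:=I\cap J$ and abbreviate $I\triangle J:=(I\setminus J)\cup(J\setminus I)$. Because $\psi$ and $\phi$ are canonical, integrating the product over any coordinate lying in $I\triangle J$ gives $0$; feeding this into \eqref{HDnonsym2} shows that the only non-vanishing canonical components $\big(\psi(X_i,i\in I)\phi(X_j,j\in J)\big)_L$ are those indexed by sets $L$ with $I\triangle J\subseteq L\subseteq I\cup J$. Fixing such an $L$, writing $L=(I\triangle J)\sqcup S$ with $S\subseteq K$, and running \eqref{HDnonsym2} once more, one finds that the only surviving terms are those indexed by $K'=(I\triangle J)\sqcup T$ with $T\subseteq S$, and that each such term equals $(-1)^{\abs{S}-\abs{T}}$ times the integral of $\psi(X_i,i\in I)\phi(X_j,j\in J)$ over the $r-\abs{T}$ shared coordinates outside $T$; by \eqref{defcontr1}--\eqref{defcontr2} this integral is exactly the contraction $\psi\star_r^{r-\abs{T}}\phi$, with retained shared arguments indexed by $T$, $\psi$-only arguments by $I\setminus J$ and $\phi$-only arguments by $J\setminus I$. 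Applying the very same computation to the function $\psi\star_r^{r-\abs{S}}\phi$ --- which, by the canonicity of $\psi$ and $\phi$, is again annihilated by integration over any of its $\psi$-only or $\phi$-only coordinates --- identifies this alternating sum as the top canonical component $\big(\psi\star_r^{r-\abs{S}}\phi\big)_L$. This yields
\[
\psi(X_i,i\in I)\,\phi(X_j,j\in J)=\sum_{S\subseteq I\cap J}\big(\psi\star_r^{r-\abs{S}}\phi\big)_{(I\triangle J)\sqcup S}\bigl((X_i)_{i\in(I\triangle J)\sqcup S}\bigr),
\]
with the arguments of each contraction assigned as just described.

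Inserting this identity into the double sum, I would then collect, for each fixed $M\subseteq[m]$ with $k:=\abs{M}$ and $s:=\abs{M\cap\{n+1,\dots,m\}}$, all contributions whose canonical component is indexed by $M$. A pair $(I,J)$ with $\abs{I\cap J}=r$ produces such a term exactly when $M=(I\triangle J)\sqcup S$ with $S=M\cap(I\cap J)$; since $I\subseteq[n]$, the $s$ indices of $M$ exceeding $n$ must lie in $J\setminus I$, which forces $\abs{S}=2r+k-p-q$ and hence the contraction parameter $r-\abs{S}=p+q-r-k$. The remaining degrees of freedom are the ordered partition $(A,B,C)$ of $M$ into $S$, $I\setminus J$ and $J\setminus I$ --- which runs precisely over $\Pi_{r,n,m}(M)$ --- together with the choice of the $p+q-r-k$ shared indices of $I\cap J$ lying outside $M$, which must be drawn from $[n]\setminus M$, a set of cardinality $n-k+s$; this produces the factor $\binom{n-k+s}{p+q-r-k}$. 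Keeping track of the admissible ranges --- $k$ runs from $\abs{p-q}$ to $p+q$, $s$ from $0$ to $q\wedge k\wedge(m-n)$, and, given $M$, $r$ from $\lceil(p+q-k)/2\rceil$ (so that $\abs{S}\ge0$) up to $p\wedge(q-s)\wedge(p+q-k)$ (so that $\abs{S}\le r$, $r\le p$ and $q-r\ge s$), while $n\ge p+q$ ensures $n-k+s\ge p+q-r-k$ --- one obtains formula \eqref{prodform}. That $\sum_M U_M$ is genuinely the Hoeffding decomposition of the product then follows from its uniqueness, since each $U_M$ depends only on $(X_i)_{i\in M}$ and, being a sum of kernels of the form $(\,\cdot\,)_M$, is canonical.

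For the bound \eqref{varum}, I would apply Minkowski's inequality in $L^2(\P)$ to \eqref{prodform} twice, first over $r$ and then over $\Pi_{r,n,m}(M)$. Each summand $\big(\psi\star_r^{p+q-r-k}\phi\big)_M\bigl((X_i)_{i\in A},(X_i)_{i\in B},(X_i)_{i\in C}\bigr)$ is a canonical kernel evaluated at the i.i.d.\ variables $(X_i)_{i\in M}$, so its $L^2(\P)$-norm equals $\norm{\big(\psi\star_r^{p+q-r-k}\phi\big)_M}_{L^2(\mu^{k})}\le\norm{\psi\star_r^{p+q-r-k}\phi}_{L^2(\mu^{k})}$, the inequality being the orthogonality of the Hoeffding components of the $k$-variable function $\psi\star_r^{p+q-r-k}\phi$ (see \cite[Chapter 9]{Major}); if this contraction is not square-integrable, the asserted bound holds trivially. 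Finally, $\abs{\Pi_{r,n,m}(M)}$ equals the number of ordered partitions of the $k-s$ indices of $M$ lying in $[n]$ into blocks of sizes $2r+k-p-q$, $p-r$ and $q-r-s$ (the $s$ indices of $M$ above $n$ being forced into $C$), i.e.\ the multinomial $\binom{k-s}{2r+k-p-q,\,p-r,\,q-r-s}$; assembling the three estimates gives \eqref{varum}. The main obstacle is the Hoeffding-component identity of the second paragraph: one must carefully track which shared coordinates are integrated out and which are retained, align this bookkeeping with the asymmetric definition \eqref{defcontr1} of $\star_r^l$, and justify the interchanges of integration (Fubini--Tonelli and Cauchy--Schwarz), which are licit since $\psi,\phi\in L^2$ forces every contraction into $L^1$ even when the contractions fail to be square-integrable.
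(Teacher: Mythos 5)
Your proof is correct, and it follows a genuinely different route from the paper's. The paper takes a top-down approach: it starts by invoking \cite[Theorem 2.6]{dobler_peccati}, which provides the Hoeffding components $U_M$ of the product $J_p^{(n)}(\psi)J_q^{(m)}(\phi)$ as alternating sums (over $L\subseteq M$) of conditional expectations $\E[W_JV_K\mid\F_L]$; it then identifies each such conditional expectation as a contraction (formula \eqref{hdvw2}), performs the index counting, and finally carries out a non-trivial combinatorial rearrangement (steps \eqref{pf2}--\eqref{pf3}, using the bijection in \eqref{pf5}) to resum the alternating sums into the top canonical components $(\psi\star_r^{p+q-r-k}\phi)_M$. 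You instead proceed bottom-up: you directly establish the pointwise Hoeffding decomposition of a single summand $\psi(X_i,i\in I)\phi(X_j,j\in J)$ as $\sum_{S\subseteq I\cap J}\bigl(\psi\star_r^{r-|S|}\phi\bigr)_{(I\triangle J)\sqcup S}$, by twice exploiting degeneracy inside \eqref{HDnonsym2}, and then aggregate over $(I,J)$, grouping by the target set $M$. This effectively unrolls the cited theorem and the subsequent rearrangement into a single, self-contained argument; the counting you perform (the $\binom{n-k+s}{p+q-r-k}$ factor for the free shared indices outside $M$, and the multinomial for $|\Pi_{r,n,m}(M)|$) matches the paper's exactly. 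What the paper's route buys is brevity by leaning on a general-purpose decomposition result; what yours buys is transparency, since the appearance of $(\psi\star_r^{p+q-r-k}\phi)_M$ is visible from the start rather than emerging from a resummation, and no external product-decomposition theorem is needed. Your handling of the integrability issues (contractions in $L^1$ via Cauchy--Schwarz, with the $L^2$-bound \eqref{varum} read as trivially true when the right-hand side is infinite) and the appeal to uniqueness of the Hoeffding decomposition to identify $\sum_M U_M$ are both sound.
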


\begin{remark}\label{rempform}{\rm 
The above product formula is an extension of the one proved in \cite[Proposition 2.6]{DP18}, for symmetric and degenerate $U$-statistics based on the same range $X_1,\dotsc,X_n$ of data. Indeed, suppose that $n=m$. Then, if $|M|=k$ and (necessarily) $s=0$ it is not hard to verify that 
\begin{align*}
 &\sum_{(A,B,C)\in\Pi_{r,n,m}(M)} \bigl(\psi\star_r^{p+q-r-k}\phi\bigr)_M\bigl((X_i)_{i\in A}, (X_i)_{i\in B}, (X_i)_{i\in C}\bigr)\\
 &=\frac{k!}{(2r+k-p-q)!(p-r)!(q-r)!}\bigl(\widetilde{\psi\star_r^{p+q-r-k}\phi}\bigr)_k\bigl(X_i,i\in M\bigr)
\end{align*}
and the product formula reduces to the one in \cite[Proposition 2.6]{DP18}.
The main difference in general is that, in the situation of Proposition \ref{pform} and for $n\not = m$, the product is no longer (in general) a finite sum of degenerate and symmetric $U$-statistics. However, its Hoeffding decomposition (in the sense of not necessarily symmetric statistics --- see e.g. \cite{KR, dobler_peccati}) is still 
completely explicit and hence suitable for providing useful bounds. 
}
\end{remark}

\begin{proof}[Proof of Proposition \ref{pform}]
 Write
\begin{equation*}
W:=J_p^{(n)}(\psi)=\sum_{J\in\D_p(n)}W_J\quad\text{and}\quad V:=J_q^{(m)}(\phi)=\sum_{K\in\D_q(m)} V_K
\end{equation*}
for the respective Hoeffding decompositions of $W$ and $V$, i.e.
\begin{align*}
 W_J=\psi(X_j,j\in J)\,,\quad J\in\D_p(n)\quad\text{and}\quad V_K=\phi(X_i,i\in K)\,,\quad K\in\D_q(m)\,.
\end{align*}
From Theorem 2.6 in \cite{dobler_peccati} we know that the Hoeffding decomposition of $VW$  is given by 
\begin{equation*}
 VW=\sum_{\substack{M\subseteq[m]:\\\abs{M}\leq p+q}} U_M\,,
\end{equation*}
where, for $M\subseteq[m]$ with $\abs{M}\leq p+q$ we have 
\begin{align}
U_M&=\sum_{L\subseteq M}(-1)^{\abs{M}-\abs{L}}
\sum_{\substack{J\in\D_p(n),K\in\D_q(m):\\J\Delta K\subseteq L,\\
M\subseteq J\cup K}}\E\bigl[W_JV_K\,\bigl|\,\F_L\bigr]\,,\label{hdvw}
\end{align}
where $\F_L:=\sigma(X_j,j\in L)$.
Note that $U_M=0$ whenever $\abs{M}<\abs{p-q}$ because $\abs{J\Delta K}\geq \abs{p-q}$ for all $J\in\D_p(n)$ and $K\in\D_q(m)$. 
Moreover, $U_M=0$ if $|M\cap\{n+1,\dotsc, m\}|> q$ since $M\cap\{n+1,\dotsc, m\}\subseteq K$ and $|K|=q$. Hence, we have 
\begin{align}\label{pf1}
 VW=\sum_{\substack{M\subseteq[m]:\\\abs{M}\leq p+q\,,\\ |M\cap\{n+1,\dotsc, m\}|\leq q }} U_M=\sum_{k=|p-q|}^{p+q}\sum_{s=0}^{q\wedge k} \sum_{\substack{M\subseteq[m]:\\\abs{M}=k\,,\\ |M\cap\{n+1,\dotsc, m\}|=s }} U_M\,.
\end{align}
Since $K\cap\{n+1,\dotsc, m\}\subseteq J\Delta K\subseteq L\subseteq M$ and $M\cap\{n+1,\dotsc, m\}\subseteq K\cap\{n+1,\dotsc, m\}$ this also implies that we can 
restrict our attention to sets $M$ and $L$ that satisfy
\[M\cap\{n+1,\dotsc, m\}=L\cap\{n+1,\dotsc, m\}=K\cap\{n+1,\dotsc, m\}\,.\]
 Writing $k:=|M|$, $r:=\abs{J\cap K}$, $l:=\abs{L}$ and $s:=|M\cap\{n+1,\dotsc, m\}|$ it follows that $r\leq p\wedge(q-s)$ and, since 
 \begin{equation}\label{inteq}
  \abs{J\cap K}=\abs{J}+\abs{K}-\abs{J\cup K}=p+q-\abs{J\cup K}\,,
 \end{equation}
it follows from $M\subseteq J\cup K$ that $r\leq p+q-k$. Moreover, since $|J\cup K|=\abs{J\cap K}+\abs{J\Delta K}$ and $J\Delta K\subseteq L\subseteq M$, it follows again from \eqref{inteq} that 
$2r\geq p+q-l\geq p+q-k$. In particular, we have $l\geq \abs{p-q}\vee (p+q-2r)=p+q-2r$. Moreover, note that 
\begin{align*}
 \abs{J\cap K\cap L}&=\abs{L}-\abs{L\cap(J\Delta K)}=\abs{L}-\abs{J\Delta K}=l-(p+q-2r)=2r+l-p-q\,,\\
 \abs{(J\cap K)\setminus L}&=\abs{(J\cap K)}-\abs{J\cap K\cap L}=r-(2r+l-p-q)=p+q-r-l\quad\text{and}\\
 \abs{L\setminus(J\cap K)}&=\abs{L}-\abs{J\cap K\cap L}=l-(2r+l-p-q)=p+q-2r\,.
\end{align*}
Note that we have
\begin{align}\label{hdvw2}
 \E\bigl[W_JV_K\,\bigl|\,(X_i)_{i\in L}\bigr]&=\E\bigl[\psi(X_j,j\in J)\phi(X_k,k\in K)\,\bigl|\,X_i,i\in L\bigr]\notag\\
 &=\bigl(\psi\star_r^{p+q-r-l} \phi\bigr)\bigl((X_i)_{i\in L\cap J\cap K},(X_j)_{j\in J\setminus K}, (X_k)_{k\in K\setminus J}\bigr)\,.
\end{align}
Let us now fix $M$ and $L$.
Then, for each $(A,B,C)\in\Pi_{r,n,m}(L)$, there are precisely 
\[\binom{n-k+s}{p+q-r-k}\]
pairs $(J,K)\in\D_p(n)\times\D_q(m)$ such that $M\subseteq J\cup K$, $J\cap K\cap L=A$, $J\setminus K=B$ and $K\setminus J=C$. Indeed, given these restrictions it only remains to choose the elements of $(J\cap K)\setminus L\subseteq[n]$ in such a way 
that 
\[M\setminus L\subseteq (J\cap K)\setminus L\,.\]
The claim now follows from the facts that $\abs{M\cap[n]}=k-s$,  
\[\bigl((J\cap K)\setminus L\bigr)\setminus(M\setminus L)=(J\cap K)\setminus M\]
and
\begin{equation*}
 \babs{(J\cap K)\setminus L}-\babs{M\setminus L}=p+q-r-l-(k-l)=p+q-r-k\,.
\end{equation*}
Thus we have proved that 
\begin{align}\label{pf2}
 U_M&=\sum_{l=|p-q|}^k\sum_{\substack{L\subseteq M:\\|L|=l}}(-1)^{\abs{M}-\abs{L}}\sum_{r=\lceil \frac{p+q-l}{2}\rceil}^{p\wedge (q-s)\wedge(p+q-k)}\binom{n-k+s}{p+q-r-k}\notag\\
 &\hspace{3cm}\cdot\sum_{(A,B,C)\in\Pi_{r,n,m}(L)}\Bigl(\psi\star_r^{p+q-r-l}\phi\Bigr)\bigl((X_i)_{i\in A}, (X_i)_{i\in B}, (X_i)_{i\in C}\bigr)\notag\\
 &= \sum_{r=\lceil \frac{p+q-k}{2}\rceil}^{p\wedge (q-s)\wedge(p+q-k)}\binom{n-k+s}{p+q-r-k} \sum_{l=p+q-2r}^k  \sum_{\substack{L\subseteq M:\\|L|=l}}(-1)^{\abs{M}-\abs{L}}\notag\\
 &\hspace{2cm}\sum_{(A,B,C)\in\Pi_{r,n,m}(L)}\Bigl(\psi\star_r^{p+q-r-l}\phi\Bigr)\bigl((X_i)_{i\in A}, (X_i)_{i\in B}, (X_i)_{i\in C}\bigr) \,,
\end{align}
Now, suppose that $(A,B,C)\in\Pi_{r,n,m}(M)$ such that, in particular, $\abs{A}=2r+k-p-q$. Moreover, suppose that 
\begin{align*}
 T_L(A,B,C)&:=\E\Bigl[\bigl(\psi\star_r^{p+q-r-k}\phi\bigr)\bigl((X_i)_{i\in A}, (X_i)_{i\in B}, (X_i)_{i\in C}\bigr)\,\bigl|\,\F_L\Bigr]\not=0\,.
\end{align*}
Then, it is easy to see that $B\cup C\subseteq L$, that
\begin{align*}
  T_L(A,B,C)&=(\psi\star_r^{p+q-r-l}\phi\bigr)\bigl((X_i)_{i\in A\cap L}, (X_i)_{i\in B}, (X_i)_{i\in C}\bigr)
\end{align*}
and that $(A\cap L,B,C)\in\Pi_{r,n,m}(L)$. Moreover, for each given $(A,B,C)\in\Pi_{r,n,m}(L)$, there is a unique $(\hat{A},B,C)\in\Pi_{r,n,m}(M)$ such that $(\hat{A}\cap L,B,C)=(A,B,C)$, namely one has to take
$\hat{A}=A\cup(M\setminus L)$. From these observations we infer that 
\begin{align}\label{pf5}
& \sum_{(A,B,C)\in\Pi_{r,n,m}(L)}\Bigl(\psi\star_r^{p+q-r-l}\phi\Bigr)\bigl((X_i)_{i\in A}, (X_i)_{i\in B}, (X_i)_{i\in C}\bigr)\notag\\
 =&\sum_{(\hat{A},B,C)\in \Pi_{r,n,m}(M)}  T_L(\hat{A},B,C)\,.
\end{align}
Now, recall that by the Hoeffding decomposition for non-symmetric kernels, for each $(A,B,C)\in\Pi_{r,n,m}(M)$ we have that 
\begin{align}\label{pf4}
 &\bigl(\psi\star_r^{p+q-r-k}\phi\bigr)_M\bigl((X_i)_{i\in A}, (X_i)_{i\in B}, (X_i)_{i\in C}\bigr)\notag\\
 &=\sum_{L\subseteq M}(-1)^{\abs{M}-\abs{L}}\E\Bigl[\bigl(\psi\star_r^{p+q-r-k}\phi\bigr)\bigl((X_i)_{i\in A}, (X_i)_{i\in B}, (X_i)_{i\in C}\bigr)\,\bigl|\,\F_L\Bigr]\notag\\
 &= \sum_{L\subseteq M}(-1)^{\abs{M}-\abs{L}}T_L(A,B,C)  \,.
\end{align}
Thus, from \eqref{pf2}, \eqref{pf5} and \eqref{pf4} we can conclude that 
\begin{align}\label{pf3}
U_M&= \sum_{r=\lceil \frac{p+q-k}{2}\rceil}^{p\wedge (q-s)\wedge(p+q-k)}\binom{n-k+s}{p+q-r-k} \notag\\
&\hspace{4cm}\cdot\sum_{l=p+q-2r}^k  \sum_{\substack{L\subseteq M:\\|L|=l}}(-1)^{\abs{M}-\abs{L}}\sum_{(A,B,C)\in\Pi_{r,n,m}(M)}T_L(A,B,C)\notag\\
 &=\sum_{r=\lceil \frac{p+q-k}{2}\rceil}^{p\wedge (q-s)\wedge(p+q-k)}\binom{n-k+s}{p+q-r-k} \sum_{(A,B,C)\in\Pi_{r,n,m}(M)}  \sum_{L\subseteq M}(-1)^{\abs{M}-\abs{L}}T_L(A,B,C)\notag\\
 &= \sum_{r=\lceil \frac{p+q-k}{2}\rceil}^{p\wedge (q-s)\wedge(p+q-k)}\binom{n-k+s}{p+q-r-k} \notag\\
 &\hspace{3cm}\cdot\sum_{(A,B,C)\in\Pi_{r,n,m}(M)} \bigl(\psi\star_r^{p+q-r-k}\phi\bigr)_M\bigl((X_i)_{i\in A}, (X_i)_{i\in B}, (X_i)_{i\in C}\bigr)\,,
\end{align}
as claimed. The bound \eqref{varum} then follows immediately from 
\begin{equation*}
 \|\bigl(\psi\star_r^{p+q-r-k}\phi\bigr)_M\bigr\|_{L^2(\mu^{k})}\leq\|\psi\star_r^{p+q-r-k}\phi\bigr\|_{L^2(\mu^{k})}
\end{equation*}
and from the fact that 
\[|\Pi_{r,n,m}(M)|=\binom{k-s}{2r+k-p-q,p-r,q-r-s}\,.\]
\end{proof}

In the next subsection, we focus on convergence of finite-dimensional distributions (f.d.d.) for processes of the form \eqref{genW}. Our approach extends the general (quantitative) CLTs from \cite{DP18}.

\subsection{F.d.d. convergence}\label{fidi}
\subsubsection{A general qualitative multivariate CLT}\label{qmclt}
Fix a positive integer $d$ and, for $1\leq i\leq d$ and $n\in\N$, let $p_i\leq m_{n,i}\leq n$ be positive integers. We will always assume that the sequences {$\{ m_{n,i} : {n\in\N}\}$ diverge to $\infty$ as $n\to\infty$, for each $i=1,\dotsc,d$, in such a way that there are positive constants $0<c_i\leq1$ such that $c_i n\leq m_{n,i}\leq n$ for all $n\in\N$, s.t. we have $m_{n,i}\asymp n$ for $i=1,\dotsc,d$.} Moreover, let $\psi^{(i)}=\psi^{(i,n)}\in L^4(\mu^{p_i})$ be degenerate kernels . Define 
\[\phi^{(i)}=\phi^{(i,n)}:=\frac{\psi^{(i)}}{\sqrt{\binom{m_{n,i}}{p_i}}}\] 
as well as
\begin{equation*}
 \sigma_n(i)^2:=\Var\bigl(J_{p_i}^{(m_{n,i})}(\phi^{(i)})\bigr)=\norm{\psi^{(i)}}^2_{L^2(\mu^{p_i})}\,.
\end{equation*}
For $i=1,\dotsc,d$ write $Y(i):=J_{p_i}^{(m_{n,i})}(\phi^{(i)})$ as well as 
\[Y=Y_n:=(Y_1,\dotsc,Y_d)^T\,.\]
Then, $Y$ is a centered random vector with components in $L^4(\P)$. We will write $\mathbf{V}=\mathbf{V}_n=\{ v_{i,k} : {1\leq i,k\leq d}\}$ for its covariance matrix. Throughout the section, we denote by 
$Z=Z_n=(Z_1,\dotsc,Z_d)^T\sim N_d(0,\mathbf{V})$
a centered Gaussian vector with the same covariance matrix as $Y$. Note that, due to degeneracy, we have $v_{i,k}=0$ unless $p_i=p_k$. The following finite-dimensional CLT is one of our crucial tools.

\begin{proposition}\label{fidiprop}
With the above notation and definitions, assume that \\$\mathbf{C}:=\lim_{n\to\infty}\mathbf{V}_n \in\R^{d\times d}$ exists. Then, $Y_n$ converges in distribution to $N_d(0,\mathbf{C})$, provided Conditions {\normalfont(i)-(iii)} below hold for all $1\leq i\leq k\leq d\,$:
\begin{enumerate}[{\normalfont(i)}]
\item $\displaystyle\lim_{n\to\infty}n^{a/2-r}\norm{\psi^{(i,n)}\star_r^{a-r}\psi^{(k,n)}}_{L^2(\mu^{p_i+p_k-a})}=0$ for all pairs $(a,r)$ of integers such that\\
$1\leq a\leq \min(p_i+ p_k-1, 2(p_i\wedge p_k))$ and $\lceil\frac{a}{2}\rceil\leq r\leq a\wedge p_i\wedge p_k$,
\item $\displaystyle \lim_{n\to\infty}n^{a/2-r}\norm{\psi^{(i,n)}}_{L^2(\mu^{p_i})}\norm{\psi^{(i,n)}\star_r^{a-r}\psi^{(i,n)}}_{L^2(\mu^{2p_i-a})}=0$ for all for all pairs $(a,r)$ of integers such that $1\leq a\leq 2p_i-1$ and $\lceil\frac{a}{2}\rceil\leq r\leq a\wedge p_i$, and 
\item $\displaystyle \lim_{n\to\infty}\frac{\norm{\psi^{(i,n)}}^3_{L^2(\mu^{p_i})}}{\sqrt{n}}=0$.
\end{enumerate}
\end{proposition}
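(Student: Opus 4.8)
The plan is to derive Proposition \ref{fidiprop} from the multivariate fourth-moment--type bounds of \cite{DP18} (specifically, from the quantitative multivariate CLT for vectors of degenerate $U$-statistics based on a \emph{common} sample size), after reducing the present ``different sample sizes'' situation to that setting. First I would fix $n$ and observe that each $\psi^{(i,n)}\in L^4(\mu^{p_i})$ is degenerate of order $p_i$, so by possibly relabelling we may group the indices $i$ according to the value of $p_i$; since $v_{i,k}=0$ unless $p_i=p_k$, the limiting Gaussian vector $N_d(0,\mathbf C)$ splits into independent blocks and it is enough to prove convergence of the joint law (not merely of the marginals). The key device will be to rewrite $Y(i)=J_{p_i}^{(n_i)}(\phi^{(i)})$ as a degenerate $U$-statistic \emph{based on the full sample} $X_1,\dots,X_n$: indeed, since $n_i\le n$, one has $J_{p_i}^{(n_i)}(\phi^{(i)}) = J_{p_i}^{(n)}(\bar\phi^{(i)})$ where $\bar\phi^{(i)}$ is the kernel obtained by restricting the sum to indices in $\{1,\dots,n_i\}$ — but this kernel is no longer symmetric. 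The cleaner route, which I would actually follow, is to keep the $U$-statistics on their own samples and use the product formula of Proposition \ref{pform}: the whole point of that formula (allowing different sample sizes) is precisely to compute and bound the variances of the Hoeffding components of products $J_{p_i}^{(n_i)}(\psi^{(i)})\,J_{p_k}^{(n_k)}(\psi^{(k)})$, and hence to control joint moments and cumulants of the vector $Y_n$.

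Next I would implement the standard moment/cumulant programme. Writing $Z_n\sim N_d(0,\mathbf V_n)$ and using a smooth-test-function metric (e.g. the one based on $C^3$ functions with bounded derivatives, as in the multivariate Stein-type bounds invoked in \cite{DP18}), it suffices to show that, for every fixed $i$, $\E[Y(i)^4] - 3\,\E[Y(i)^2]^2 \to 0$, and that all the relevant ``off-diagonal'' fourth mixed moments and the third moments $\E[Y(i)^3]$ converge to their Gaussian values. Condition (iii) handles the third moments: by the bound on the variance of the relevant Hoeffding component coming from Proposition \ref{pform} (applied to $J_{p_i}^{(n_i)}(\psi^{(i)})^2$ and then multiplied once more), one gets $\abs{\E[Y(i)^3]}\lesssim \norm{\psi^{(i,n)}}_{L^2(\mu^{p_i})}^3/\sqrt n$ up to combinatorial constants, which vanishes by (iii). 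Conditions (i) and (ii) handle the fourth cumulants: expanding $\E[Y(i)Y(k)Y(l)Y(m)]$ via two applications of the product formula produces a sum indexed by contraction parameters $(a,r)$, and each term is, after normalisation, bounded by a product of contraction norms of the form $n^{a/2-r}\norm{\psi^{(i,n)}\star_r^{a-r}\psi^{(k,n)}}$ (the ``cross'' terms) or $n^{a/2-r}\norm{\psi^{(i,n)}}\,\norm{\psi^{(i,n)}\star_r^{a-r}\psi^{(i,n)}}$ (the ``self'' terms arising when two of the four factors coincide and one contracts a $U$-statistic against itself) — exactly the quantities appearing in (i) and (ii). The ranges of $(a,r)$ quoted in (i) and (ii) are precisely the ones for which the combinatorial prefactors do not decay, so the stated conditions are exactly what is needed to kill every non-Gaussian contribution. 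I would also record that Lemma \ref{contrlemma} guarantees all these contraction kernels are well-defined and square-integrable (since $\psi^{(i,n)}\in L^4$), so every expression above is finite.

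Having shown $\E[Y_n^{\otimes 3}]$ and the fourth cumulants of $Y_n$ converge to those of $Z_n$, and since $\mathbf V_n\to\mathbf C$ by hypothesis, a multivariate CLT of Stein/fourth-moment type (as in \cite[Section 4]{DP18}, whose bounds are stated for common sample sizes but whose \emph{conclusion}, once all the moment inputs have been produced by Proposition \ref{pform}, applies verbatim) yields $d_3(Y_n, Z_n)\to 0$ for the smooth metric $d_3$; combined with $Z_n\Rightarrow N_d(0,\mathbf C)$ and the fact that $d_3$ metrizes convergence in distribution on tight sequences, this gives $Y_n\Rightarrow N_d(0,\mathbf C)$. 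The main obstacle — and the reason Proposition \ref{pform} is proved in the paper in the first place — is the bookkeeping in the two-fold product expansion with three genuinely different sample sizes $n_i,n_k,n$: one must check that every combinatorial factor $\binom{n-k+s}{p+q-r-k}\binom{k-s}{\cdots}$ contributes a power of $n$ that is dominated by the negative power $n^{a/2-r}$ carried by the corresponding contraction norm, uniformly over the (finitely many) admissible index configurations, so that Conditions (i)--(iii) really are sufficient. Once that accounting is done carefully, the rest is the by-now-standard translation from ``contraction norms vanish'' to ``fourth cumulants vanish'' to ``CLT''.
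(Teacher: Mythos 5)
Your overall architecture points at the right tools (Stein's method via the multivariate bounds of \cite{DP18}, plus the product formula of Proposition \ref{pform} to handle the different sample sizes $n_i\le n$), but the specific route you describe is not the paper's and has a couple of substantive gaps.

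The paper's proof does \emph{not} go through explicit third- and fourth-moment/cumulant computations. It quotes, essentially verbatim, the Stein-type bound of \cite[Lemma 4.1]{DP18}:
\begin{align*}
\babs{\E h(Y)-\E h(Z)}&\lesssim \sum_{i,k}\biggl(\sum_{\substack{M\subseteq[n]\\ \abs{M}\le p_i+p_k-1}}\Var\bigl(U_M(i,k)\bigr)\biggr)^{1/2}
+\sum_{i}\sigma_n(i)\biggl(\sum_{\substack{M\subseteq[n]\\ \abs{M}\le 2p_i-1}}\Var\bigl(U_M(i,i)\bigr)\biggr)^{1/2}
+\frac{1}{\sqrt n}\sum_i\sigma_n(i)^3 ,
\end{align*}
where $Y(i)Y(k)=\sum_M U_M(i,k)$ is the Hoeffding decomposition of the \emph{pairwise} product. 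Because this bound is expressed purely in terms of the random variables $U_M(i,k)$ and not of the underlying kernels, it applies verbatim even when the $Y(i)$'s are built on samples of different sizes; the only new work is to bound $\Var(U_M(i,k))$ for different $n_i,n_k$, which is exactly what Proposition \ref{pform} and its corollary \eqref{varum} provide. This yields the quantities in Conditions (i) and (ii) after tracking the combinatorial factors, and the third summand is precisely the quantity in Condition (iii). In contrast, your plan requires expanding \emph{quadruple} products $\E[Y(i)Y(k)Y(l)Y(m)]$ (two applications of the product formula), which is strictly more work, and then invoking a multivariate ``fourth moment theorem'' for degenerate $U$-statistics that does not exist in the off-the-shelf form you appeal to: for de Jong--type statements, fourth-cumulant convergence alone is not sufficient, one also needs a Lindeberg/maximal-influence condition. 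If you tried to carry out your plan you would be forced back to the Stein bound, and the Stein bound is structured so that only pairwise products are needed -- that is a key simplification of the actual argument.

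There is also a concrete misreading of Condition (iii). You interpret $\norm{\psi^{(i,n)}}^3_{L^2}/\sqrt n\to 0$ as a bound on $\abs{\E[Y(i)^3]}$. It is not a third-moment condition at all: it is the Lindeberg/influence-type remainder appearing as the third term of the Stein bound above (proportional to $\sigma_n(i)^3/\sqrt n$, $\sigma_n(i)^2=\norm{\psi^{(i,n)}}^2_{L^2(\mu^{p_i})}$). This term does not vanish automatically and is \emph{the} side condition that distinguishes a genuine CLT statement from a bare fourth-cumulant computation. Your proposed chain ``contraction norms vanish $\Rightarrow$ fourth cumulants vanish $\Rightarrow$ CLT'' therefore has a missing link, which is supplied in the paper precisely by the third Stein term and Condition (iii). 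Finally, your opening observation about reducing to a common sample by using an asymmetric kernel restricted to $[n_i]$ is correctly discarded, and the block-diagonal structure you mention (from $v_{i,k}=0$ unless $p_i=p_k$) is not actually used in the proof.
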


\begin{proof} For $1\leq i,k\leq d$, we use the notation
\begin{equation*}
 Y(i)Y(k)=\sum_{\substack{M\subseteq[n]: \abs{M}\leq p_i+p_k}} U_M(i,k)
\end{equation*}
to indicate the Hoeffding decomposition of $Y(i)Y(k)$. The following bound is taken from Lemma 4.1 in \cite{DP18}: for $h\in C^3(\R^d)$ whose partial derivatives up to 
order three are all bounded, there exist constants $\tilde{M}_2(h), M_3(h)\in(0,\infty)$ such that 
\begin{align*}
 &\babs{\E[h(Y)]-\E[h(Z)]}\leq \frac{1}{4p_1}\tilde{M}_2(h)\sum_{i,k=1}^d(p_i+p_k)\biggl(\sum_{\substack{M\subseteq[n]:\\\abs{M}\leq p_i+p_k-1}}\Var\bigl(U_M(i,k)\bigr)\biggr)^{1/2}\\
&\;+\frac{2M_3(h)\sqrt{d}}{9p_1}\sum_{i=1}^d p_i\sigma_n(i)\biggl(\sum_{\substack{M\subseteq[n]:\\\abs{M}\leq 2p_i-1}}\Var\bigl(U_M(i,i)\bigr)\biggr)^{1/2}\\
&\;+\frac{\sqrt{2d}M_3(h)}{9p_1\sqrt{n}}\sum_{i=1}^d p_i^{3/2}\sigma_n(i)^3\sqrt{\kappa_{p_i}}\,,
\end{align*}
and each finite constant $\kappa_{p_i}$ only depends on $p_i$, $1\leq i\leq d$. We now apply Proposition \ref{pform} in order to bound 
\begin{equation*}
\biggl(\sum_{\substack{M\subseteq[n]:\\\abs{M}\leq p_i+p_k-1}}\Var\bigl(U_M(i,k)\bigr)\biggr)^{1/2}
\end{equation*}
for $1\leq i\leq k\leq d$. We will, for notational convenience, assume that $m_{n,i}\leq m_{n,k}$. 
Moreover, for integers $p,q\geq0$ we will write $M(p,q):=\min(2(p\wedge q),p+q-1)$.  From \eqref{varum} we know that for $M\subseteq [m_{n,k}]$ such that $|M|=p_i+p_k-a$ for some $a\in\{1,\dotsc,M(p_i,p_k)\}$ \\
and $|M\cap\{m_{n,i}+1,\dotsc, m_{n,k}\}|= s\in\{0,1,\dotsc,p_k\wedge(p_i+p_k-a)\wedge(m_{n,k}-m_{n,i})\}$ we have
\begin{align}\label{varum2}
& \sqrt{\Var\bigl(U_M(i,k)\bigr)}\notag\\
\leq&\hspace{-2mm}\sum_{r=\lceil\frac{a}{2}\rceil}^{a\wedge p_i\wedge (p_k-s)}\binom{m_{n,i}-p_i-p_k+a+s}{a-r}\binom{p_i+p_k-a}{p_i-r,p_k-r,2r-a}\|\phi^{(i)}\star_r^{a-r}\phi^{(k)}\|_{L^2(\mu^{p_i+p_k-a})}\notag\\
 &=:b_{i,k}(a,s)\,.
\end{align}
Then, we have 
\begin{align*}
 &\sum_{\substack{M\subseteq[n]:\\\abs{M}\leq p_i+p_k-1}}\Var\bigl(U_M(i,k)\bigr)\\
 &=
 \sum_{a=1}^{M(p_i,p_k)} \,\,\sum_{s=0}^{p_k\wedge(p_i+p_k-a)\wedge(m_{n,k}-m_{n,i})}\sum_{\substack{M\subseteq[m_{n,k}]:\\ |M|=p_i+p_k-a\,,\\ |M\cap\{m_{n,i}+1,\dotsc, m_{n,k}\}|= s}}\Var\bigl(U_M(i,k)\bigr)\\
 &\leq \sum_{a=1}^{M(p_i,p_k)} \sum_{s=0}^{p_k\wedge(p_i+p_k-a)\wedge(m_{n,k}-m_{n,i})}\sum_{\substack{M\subseteq[m_{n,k}]:\\ |M|=p_i+p_k-a\,,\\ |M\cap\{m_{n,i}+1,\dotsc, m_{n,k}\}|= s}}b_{i,k}^2(a,s)\\
 &=\sum_{a=1}^{M(p_i,p_k)} \sum_{s=0}^{p_k\wedge(p_i+p_k-a)\wedge(m_{n,k}-m_{n,i})}\binom{m_{n,k}-m_{n,i}}{s}\binom{m_{n,i}}{p_i+p_k-a-s}b_{i,k}^2(a,s)\\
 &\leq \sum_{a=1}^{M(p_i,p_k)} \binom{m_{n,k}}{p_i+p_k-a}\sum_{s=0}^{p_k\wedge(p_i+p_k-a)\wedge(m_{n,k}-m_{n,i})}b_{i,k}^2(a,s)\,.
\end{align*}
Now, writing 
\begin{equation*}
 K(p_i,p_k,a,r):=\binom{p_i+p_k-a}{p_i-r,p_k-r,2r-a}
\end{equation*}
and using the inequality \eqref{varum2}, we obtain
\begin{align*}
 &\biggl(\sum_{\substack{M\subseteq[n]:\\\abs{M}\leq p_i+p_k-1}}\Var\bigl(U_M(i,k)\bigr)\biggr)^{1/2}\\
 &\leq \sum_{a=1}^{M(p_i,p_k)} \sqrt{\binom{m_{n,k}}{p_i+p_k-a}}
 \sum_{s=0}^{p_k\wedge(p_i+p_k-a)\wedge(m_{n,k}-m_{n,i})}\sum_{r=\lceil\frac{a}{2}\rceil}^{a\wedge p_i\wedge (p_k-s)}\binom{m_{n,i}-p_i-p_k+a+s}{a-r}\\
 &\hspace{4cm}\cdot K(p_i,p_k,a,r)\norm{\phi^{(i)}\star_r^{a-r}\phi^{(k)}}_{L^2(\mu^{p_i+p_k-a})}\\
 &\leq \frac{p_k+1}{\sqrt{\binom{m_{n,i}}{p_i}} \sqrt{\binom{m_{n,k}}{p_k}}} \sum_{a=1}^{M(p_i,p_k)} m_{n,k}^{(p_i+p_k-a)/2}\\
 &\hspace{5cm}\cdot\sum_{r=\lceil\frac{a}{2}\rceil}^{a\wedge p_i\wedge (p_k-s)} m_{n,i}^{a-r} K(p_i,p_k,a,r)
 \norm{\psi^{(i)}\star_r^{a-r}\psi^{(k)}}_{L^2(\mu^{p_i+p_k-a})}\\
 &\leq C(p_i,p_k)\sum_{a=1}^{M(p_i,p_k)} m_{n,k}^{(p_i-a)/2}\sum_{r=\lceil\frac{a}{2}\rceil}^{a\wedge p_i\wedge p_k} m_{n,i}^{a-r-p_i/2} \norm{\psi^{(i)}\star_r^{a-r}\psi^{(k)}}_{L^2(\mu^{p_i+p_k-a})}\\
&{\leq C(p_i,p_k)\sum_{a=1}^{M(p_i,p_k)} \Bigl(\frac{m_{n,k}}{m_{n,i}}\Bigr)^{p_i/2} m_{n,k}^{a/2-r}\sum_{r=\lceil\frac{a}{2}\rceil}^{a\wedge p_i\wedge p_k}  \norm{\psi^{(i)}\star_r^{a-r}\psi^{(k)}}_{L^2(\mu^{p_i+p_k-a})}
\,,}
\end{align*}
{where the finite constant $C(p_i,p_k)$ only depends on $p_i$ and $p_k$, as $m_{n,i}\leq m_{n,k}$. Since we also have that $c_i n\leq m_{n,i} \leq m_{n,k}\leq n$ we can further bound 
\begin{align*}
&\biggl(\sum_{\substack{M\subseteq[n]:\\\abs{M}\leq p_i+p_k-1}}\Var\bigl(U_M(i,k)\bigr)\biggr)^{1/2}\\
&\leq c_i^{-p_i/2} C(p_i,p_k)\sum_{a=1}^{M(p_i,p_k)} \sum_{r=\lceil\frac{a}{2}\rceil}^{a\wedge p_i\wedge p_k} 
n^{a/2-r} \norm{\psi^{(i)}\star_r^{a-r}\psi^{(k)}}_{L^2(\mu^{p_i+p_k-a})},
\end{align*}
and the desired conclusion follows immediately.}
\end{proof}

\begin{remark}\label{fidirem1}{\rm
 Note that the conditions {\normalfont (i)-(iii)} in Proposition \ref{fidiprop} are the same as those we would obtain in the case $m_{n,i}=n$ for $i=1,\dotsc,d$. In particular they make sure that the vector 
 \begin{equation*}
  V:=\bigl(J_{p_1}^{(n)}(\phi^{(1)}),\dotsc,J_{p_d}^{(n)}(\phi^{(d)})\bigr)^T
 \end{equation*}
converges in distribution to $N_d(0,\Gamma)$, whenever $\Gamma:=\lim_{n\to\infty}\E\bigl[V V^T\bigr]$ exists.}
\end{remark}

\begin{remark}\label{r:lolo} {\rm Let the integers $\{m_{n,i}\}$ and kernels $\{\varphi^{(i,n)}\}$ be defined as above. For each $i=1,...,d$ and $n\geq 1$ consider sets of pairs of integers of the type 
$$
A(i,n) \subset \{(k_1, ...,k_{p_i}) : 1\leq k_1 <...< k_{p_i}  \leq n\},
$$
and assume that, as $n\to \infty$ and for every $i=1,...,d$, $|A(i,n) | \sim \binom{m_{n,i}}{p_i}$. For every $i=1,...,d$ now set
$$
H(i,n) := \sum_{(k_1, ...,k_{p_i})  \in A(i,n)} \varphi^{(i,n)}(X_{k_1}, ..., X_{k_{p_i}} )\, ,
$$
and write ${\bf K}_n$, $n\geq 1$, to denote the covariance matrix of the vector \\$H_n = (H(1,n),....,H(d,n))$. Then, the proof of Proposition \ref{fidiprop} can be straightforwardly adapted to show that, if ${\bf K}_n$ converges to a positive definite matrix ${\bf K}_\star$ and Conditions (i)--(iii) in Proposition \ref{fidiprop} are veirified, then $H_n$ converges in distribution to $Z\sim N_d(0, {\bf K}_\star)$. Such a conclusion plays a role in the proof of Theorem \ref{t:maincp}.

}
\end{remark}

\subsubsection{F.d.d. convergence for general symmetric $U$-processes}
Let $\psi:E^p\rightarrow\R$ be a symmetric kernel of order $p$ which is not necessarily degenerate and which might explicitly depend on the sample size $n$. Fix time points $0\leq t_1<\ldots<t_m\leq1$.
Then, for each $j=1,\dotsc,m$, the random variable 
$F_j:=J_p^{(\floor{nt_j})}(\psi)$ has the Hoeffding decomposition 
\begin{equation*}
 F_j=\E[F_j]+\sum_{k=1}^p \binom{\floor{nt_j}-k}{p-k} J_k^{(\floor{nt_j})}(\psi_k)\,,
\end{equation*}
where the symmetric and degenerate kernels $\psi_k:E^k\rightarrow\R$ of order $k$ are given by 
\begin{align*}
\psi_k(x_1,\dotsc,x_k)&=\sum_{l=0}^k(-1)^{k-l}\sum_{1\leq i_1<\dotsc<i_l\leq k} g_l(x_{i_1},\dotsc,x_{i_l})
 \end{align*}
and the symmetric functions $g_l:E^l\rightarrow\R$ are defined by 
\begin{equation*}
 g_l(y_1,\dotsc,y_l):=\E\bigl[\psi(y_1,\dotsc,y_l,X_1,\dotsc,X_{p-l})\bigr]\,.
\end{equation*}
Without loss of generality, we can thus assume that $t_1>0$ and also that $0<\|\psi\|_{L^2(\mu^{p})}<+\infty$ which implies that 
\[0<\sigma_n^2:=\Var(J_p^{(n)}(\psi))=\sum_{k=1}^p\binom{n-k}{p-k}^2\binom{n}{k}\|\psi_k\|_{L^2(\mu^{k})}^2<+\infty\]
for all $n\geq p$. We will further write
\begin{equation*}
 W_j:=\frac{F_j-\E[F_j]}{\sigma_n}
\end{equation*}
for $j=1,\dotsc,m$. Our goal is to use Proposition \ref{fidiprop} in order to find conditions ensuring that the vector $W:=W_n:=(W_1,\dotsc,W_m)^T$ converges to some multivariate normal distribution 
$N_m(0,\mathbf{D})$, which requires in particular that the limit $\mathbf{D}:=\lim_{n\to\infty}\E[WW^T]\in\R^{m\times m}$ exists. Let us write $d:=mp$ and for $i=1,\dotsc,d$ let $i=k_ip+s_i$, where $k_i\in\{0,1,\dotsc,m-1\}$ and 
$s_i\in\{1,\dotsc,p\}$ as well as $m_{n,i}:=\floor{n t_{k_i+1}}$. Moreover, similarly as in \cite[Section 5]{DP18}, we define 
\begin{align*}
 \phi^{(i)}&:=\phi^{(n,i)}:=\frac{\binom{m_{n,i}-s_i}{p-s_i}\psi_{s_i}}{\sigma_n}\quad\text{and}\\
  \psi^{(i)}&:=\psi^{(n,i)}:=\sqrt{\binom{m_{n,i}}{s_i}}\phi^{(i)}\,.
\end{align*}
With this notation at hand, we define the random vector $Y:=(Y_1,\dotsc,Y_d)^T$, where  $Y_i:=J^{(m_{n,i})}_{s_i}(\phi^{(i)})$, $1\leq i\leq d$.
In this way, our notation is fitted to the framework of Subsection \ref{qmclt}. We are going to reformulate the conditions from Proposition \ref{fidiprop}. 
First note that $\E[Y_iY_j]=0$ whenever $s_i\not=s_j$, due to the degeneracy of the involved kernels. On the other hand, if $s_i=s_j=s$, then 
\begin{equation*}
 \E[Y_i Y_j]=
 \frac{\binom{m_{n,i}\wedge m_{n,j}}{s}\binom{m_{n,i}-s}{p-s}\binom{m_{n,j}-s}{p-s}}{\sigma_n^2}\norm{\psi_{s}}^2_{L^2(\mu^{s})}\,.
\end{equation*}
Since $m_{n,i}=\floor{nt_i}$ for $1\leq i\leq d$, the covariance matrix of $Y$ thus converges to some limit $\Gamma\in\R^{d\times d}$ if and only if the real limit
\begin{align}\label{fidi1}
 \lim_{n\to\infty}\frac{n^{2p-s}}{\sigma_n^2}\norm{\psi_{s}}^2_{L^2(\mu^{s})}
\end{align}
exists for $s=1,\dotsc,p$. 
Moreover, for $1\leq i,k\leq d$ we have 
\begin{align*}
 &n^{a/2-r} \norm{\psi^{(i,n)}\star_r^{a-r}\psi^{(k,n)}}_{L^2(\mu^{s_i+s_k-a})}\\
 &=n^{a/2-r}\frac{ \sqrt{\binom{m_{n,i}}{s_i}}\sqrt{\binom{m_{n,k}}{s_k}}\binom{m_{n,i}-s_i}{p-s_i}\binom{m_{n,k}-s_k}{p-s_k}  }{\sigma_n^2}
 \norm{\psi_{s_i}\star_r^{a-r}\psi_{s_k}}_{L^2(\mu^{s_i+s_k-a})}\,.
\end{align*}
Thus, $\lim_{n\to\infty}n^{a/2-r} \norm{\psi^{(i,n)}\star_r^{a-r}\psi^{(k,n)}}_{L^2(\mu^{s_i+s_k-a})}=0$ if and only if 
\begin{equation*}
 \lim_{n\to\infty}\frac{n^{2p+\frac{a-s_i-s_k}{2}-r}}{\sigma_n^2}\,\norm{\psi_{s_i}\star_r^{a-r}\psi_{s_k}}_{L^2(\mu^{s_i+s_k-a})}=0\,.
\end{equation*}
Further, for $i=1,\dotsc,d$, we have 
\begin{align*}
\frac{\norm{\psi^{(i,n)}}^3_{L^2(\mu^{s_i})}}{\sqrt{n}}&=\frac{\binom{m_{n,i}}{s_i}^{3/2}\binom{m_{n,i}-s_i}{p-s_i}^{3/2}}{\sqrt{n}\sigma_n^3}\norm{\psi_{s_i}}^3_{L^2(\mu^{s_i})}\,.
\end{align*}
Hence, $\lim_{n\to\infty}\frac{\norm{\psi^{(i,n)}}^3_{L^2(\mu^{s_i})}}{\sqrt{n}}=0$ if and only if 
\begin{align*}
 \lim_{n\to\infty}\frac{n^{3p-\frac{3s_i+1}{2}}}{\sigma_n^3}\norm{\psi_{s_i}}^3_{L^2(\mu^{s_i})}=\biggl(\lim_{n\to\infty}n^{-1/3}
\frac{n^{2p-s_i}}{\sigma_n^2}\norm{\psi_{s_i}}^2_{L^2(\mu^{s_i})}\biggr)^{3/2}=0\,,
\end{align*}
which is implied by \eqref{fidi1}.
Taking into account that, for $j=1,\dotsc,m$,
\begin{align*}
 1&=\Var\bigl(J_p^{(n)}(\psi/\sigma_n)\bigr)\geq\Var(W_j)=\sum_{i=(j-1)p+1}^{jp} \norm{\psi^{(i,n)}}^2_{L^2(\mu^{s_i})}
\end{align*}
and that $(W_1,\dotsc,W_m)^T$ is obtained from $(Y_1,\dotsc,Y_d)^T$ by applying a linear functional, from Proposition \ref{fidiprop} we thus deduce the following result. Note that we also apply the reindexing $l:=a-r$.

\begin{theorem}\label{fiditheo}
With the above notation and definitions, the vector $(W_1,\dotsc,W_m)^T$ converges, as $n\to\infty$, to a multivariate normal distribution, whenever the following conditions hold for all $1\leq v\leq u\leq p$:
\begin{enumerate}[{\normalfont (a)}]
 \item The real limit $\lim_{n\to\infty}\frac{n^{2p-v}}{\sigma_n^2}\norm{\psi_{v}}^2_{L^2(\mu^{v})}$ does exist and
  \item $\displaystyle \lim_{n\to\infty}\frac{n^{2p-\frac{u+v+r-l}{2}}}{\sigma_n^2}\,\norm{\psi_{v}\star_r^{l}\psi_{u}}_{L^2(\mu^{v+u-r-l})}=0$ for all pairs $(l,r)$ of integers such that 
 $1\leq r\leq v$ and $0\leq l\leq r\wedge(u+v-r-1)$.
\end{enumerate}
\end{theorem}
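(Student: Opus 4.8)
The plan is to derive Theorem \ref{fiditheo} directly from Proposition \ref{fidiprop} by carefully unwinding the notational reductions set up in the paragraphs preceding the statement. The key observation is that all the bookkeeping has already been done: we have introduced the index set $i = k_ip + s_i$, the sample sizes $n_i = \lfloor n t_{k_i+1}\rfloor$, the rescaled kernels $\phi^{(i)}$ and $\psi^{(i)}$, and the vector $Y = (Y_1,\dots,Y_d)^T$ with $Y_i = J_{s_i}^{(n_i)}(\phi^{(i)})$, so that the setup of Subsection \ref{qmclt} applies verbatim. Thus the entire content of the proof is the verification that Conditions (a) and (b) of the theorem are equivalent to (or imply) the hypotheses of Proposition \ref{fidiprop}, after which Proposition \ref{fidiprop} yields convergence of $Y_n$ to a Gaussian vector, and the continuous mapping theorem (applied to the fixed linear map sending $Y$ to $W = (W_1,\dots,W_m)^T$, each $W_j$ being a linear combination of the $Y_i$ with $k_i = j-1$) transfers this to $W_n$.

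First I would establish that the covariance matrix $\mathbf{V}_n = \E[YY^T]$ converges: since degeneracy forces $\E[Y_iY_j] = 0$ unless $s_i = s_j$, and for $s_i = s_j = s$ one computes $\E[Y_iY_j] = \binom{n_i\wedge n_j}{s}\binom{n_i-s}{p-s}\binom{n_j-s}{p-s}\sigma_n^{-2}\|\psi_s\|_{L^2(\mu^s)}^2$, using $n_i \sim n t_{k_i+1}$ one sees the limit exists exactly when $\lim_n \frac{n^{2p-s}}{\sigma_n^2}\|\psi_s\|_{L^2(\mu^s)}^2$ does for each $s = 1,\dots,p$ — which is Condition (a). Next I would rewrite Conditions (i), (ii), (iii) of Proposition \ref{fidiprop} in terms of the $\psi_k$: plugging in the definitions of $\phi^{(i)}, \psi^{(i)}$ and using $\binom{n_i}{s_i} \asymp n^{s_i}$, $\binom{n_i - s_i}{p-s_i}\asymp n^{p-s_i}$, one finds $n^{a/2-r}\|\psi^{(i,n)}\star_r^{a-r}\psi^{(k,n)}\|_{L^2}$ is comparable to $\frac{n^{2p + (a-s_i-s_k)/2 - r}}{\sigma_n^2}\|\psi_{s_i}\star_r^{a-r}\psi_{s_k}\|_{L^2}$; substituting $l = a - r$ (so the exponent becomes $2p - \frac{u+v+r-l}{2}$ with $u,v$ ranging over the relevant $s_i,s_k$) turns Condition (i) into Condition (b). I would then check that Condition (ii) is a special case of Condition (b) (taking $i = k$ and absorbing the extra factor $\|\psi^{(i,n)}\|_{L^2} \lesssim 1$, which holds because $\Var(W_j) \le \Var(J_p^{(n)}(\psi/\sigma_n)) = 1$), and that Condition (iii) follows from Condition (a), since $\frac{\|\psi^{(i,n)}\|^3_{L^2}}{\sqrt n} \asymp \bigl(n^{-1/3}\cdot\frac{n^{2p-s_i}}{\sigma_n^2}\|\psi_{s_i}\|^2_{L^2}\bigr)^{3/2} \to 0$ as the bracketed quantity converges by (a).

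With these equivalences in hand, Proposition \ref{fidiprop} gives $Y_n \Longrightarrow N_d(0,\Gamma)$ for the limiting covariance $\Gamma$, and since $(W_1,\dots,W_m)^T$ is the image of $(Y_1,\dots,Y_d)^T$ under a linear map that does not depend on $n$, the continuous mapping theorem yields that $(W_1,\dots,W_m)^T$ converges to the corresponding (degenerate or non-degenerate) Gaussian law $N_m(0,\mathbf{D})$. I do not expect a genuine obstacle here — the work is entirely in the translation of the analytic conditions through the chain of definitions. The one point requiring a little care is the bound $\|\psi^{(i,n)}\|_{L^2(\mu^{s_i})} \lesssim 1$ needed to deduce (ii) from (b): this must be justified from the identity $1 = \Var(J_p^{(n)}(\psi/\sigma_n)) \ge \Var(W_j) = \sum_{i : k_i = j-1}\|\psi^{(i,n)}\|^2_{L^2(\mu^{s_i})}$, which shows each such norm is bounded by $1$ uniformly in $n$. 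Everything else is a routine matching of powers of $n$.
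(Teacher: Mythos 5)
Your proposal is correct and follows essentially the same path as the paper's own argument: you match Condition (a) with the convergence of the covariance matrix of $Y$, translate Condition (i) of Proposition \ref{fidiprop} into Condition (b) via the reindexing $l=a-r$, dispose of Condition (iii) using the $n^{-1/3}$ factor together with (a), absorb Condition (ii) into (b) via the uniform bound $\sum_{i=(j-1)p+1}^{jp}\|\psi^{(i,n)}\|^2_{L^2(\mu^{s_i})}=\Var(W_j)\le 1$, and conclude by applying the (fixed, $n$-independent) linear map that sums the $Y_i$ over blocks $k_i=j-1$ to produce $(W_1,\dots,W_m)^T$. This is precisely the chain of reductions the paper carries out.
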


Due to the complicated expressions of the kernels $\psi_s$, the following result, which is a rectified version of Lemma 5.1 of \cite{DP18}, is often useful for bounding the contraction norms appearing in Theorem \ref{fiditheo}. 
Recall the definition of the set $Q(i,k,r,l)$ defined before Theorem \ref{maintheo}.

\begin{lemma}\label{genulemma}
For positive integers $1\leq r, i, k \leq p$ and $0\leq l\leq p$ such that $0\leq l\leq r\leq i\wedge k$ there exists a constant $K(i,k,r,l)\in(0,\infty)$ which only depends on $i,k,r$ and $l$ such that 
\begin{align*}
 \|\psi_{i}\star_r^l\psi_{k}\|_{L^2(\mu^{ i+k-r-l})}&\leq K(i,k,r,l) \max_{(j,m,a,b)\in Q(i,k,r,l)}\|g_j\star_a^b g_m\|_{L^2(\mu^{j+m-a-b})}\,.
 \end{align*}
\end{lemma}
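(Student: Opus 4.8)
The idea is to expand each $\psi_i$ and $\psi_k$ via the alternating-sum formula \eqref{defpsis}, so that $\psi_i\star_r^l\psi_k$ becomes a finite linear combination of terms of the form $g_{a'}\star_r^l g_{b'}$ where $g_{a'}$ is a kernel of order $a'\le i$ (obtained from some subset of the first $i$ variables) and $g_{b'}$ is of order $b'\le k$. By the triangle inequality in $L^2(\mu^{i+k-r-l})$, it suffices to bound the $L^2$-norm of each such term by $\|g_j\star_a^b g_m\|_{L^2(\mu^{j+m-a-b})}$ for some $(j,m,a,b)\in Q(i,k,r,l)$, times a combinatorial constant. The first step is therefore to carefully track, for a contraction $g_{a'}\star_r^l g_{b'}$ where the two kernels only share some of the ``symmetrization'' variables, how to rewrite it as a genuine contraction $g_j\star_a^b g_m$ of the full kernels: here $a$ is the number of variables over which one actually integrates (which equals the overlap in the integrated block), $b$ counts the variables that are integrated in \emph{both} copies, and $j,m$ are the resulting orders. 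One then reads off the constraints satisfied by $(j,m,a,b)$ and checks they are exactly the seven defining conditions of $Q(i,k,r,l)$: conditions 1.–6. come from elementary counting (orders can only shrink, the doubly-integrated variables are a subset of the singly-integrated ones, the output dimension can only be at most $i+k-r-l$, etc.), and condition 7. (the special role of $j=m=p$) reflects that when $\psi_p=\psi$ itself appears one cannot drop a contraction index entirely because of the degeneracy structure.

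Concretely, I would proceed as follows. First, substitute \eqref{defpsis} into $\psi_i\star_r^l\psi_k$ and use bilinearity of $(\cdot)\star_r^l(\cdot)$ and of integration to obtain a finite sum $\sum c_{\alpha} \, G_\alpha$, where each $G_\alpha$ is a partial contraction of a $g$-kernel against another $g$-kernel. Second, for a fixed term, relabel variables and apply the definition \eqref{defcontr1} of the contraction operator together with Fubini/Tonelli (justified since by Lemma \ref{contrlemma} all the relevant contractions of square-integrable kernels are well-defined and, because the $g_l$ are bounded-order conditional expectations of $\psi\in L^2$, they lie in $L^2$ — so all the $\star$-operations are legitimate and the norms are finite): rewrite $G_\alpha$ as $\pm\, g_j\star_a^b g_m$ composed possibly with further integration of some ``free'' variables, and use Lemma \ref{contrlemma}(v) (or the Cauchy–Schwarz-type bounds in parts (ii)–(iii)) to dominate $\|G_\alpha\|_{L^2}$ by $\|g_j\star_a^b g_m\|_{L^2(\mu^{j+m-a-b})}$. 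Third, verify that the resulting quadruple lies in $Q(i,k,r,l)$; collect the finitely many combinatorial prefactors into a single constant $K(i,k,r,l)$ depending only on $i,k,r,l$, and bound the whole sum by this constant times the maximum over $Q(i,k,r,l)$.

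The main obstacle is the bookkeeping in the second step: precisely identifying, for each term produced by the double expansion, which subset of variables gets integrated once, which gets integrated twice, and which remains free, and then matching those counts to the inequalities 1.–7. defining $Q(i,k,r,l)$. In particular, deriving condition 7. requires noticing that the extreme case $j=m=p$ forces $g_p=\psi$, and since the $U$-statistic has a symmetric degenerate kernel there is no admissible way to lower the contraction indices below $(a,b)=(r,l)$ with $r\ge 1$ — otherwise a partially integrated copy of $\psi$ would vanish or reduce to a lower-order $g$, contradicting $j=m=p$. This is exactly the refinement over \cite[Lemma 5.1]{DP18}: keeping condition 7. sharp is what allows the $t_n$-power bookkeeping in the random-geometric-graph application (cf. \eqref{sg1}) to go through.
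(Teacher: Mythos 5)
Your overall structure (expand $\psi_i,\psi_k$ via \eqref{defpsis}, collect terms, bound by Minkowski, match indices against the defining inequalities of $Q(i,k,r,l)$) is exactly the paper's route, and your reading of conditions 1.--6. as elementary counting is right. There is, however, a genuine misstep at the crucial norm-estimation step. You propose to bound $\|G_\alpha\|_{L^2(\mu^{i+k-r-l})}$ by invoking Lemma \ref{contrlemma}(v) (or the Cauchy--Schwarz bounds in (ii)--(iii)). None of those are the right tool here, and they would not deliver the claimed domination: part (v) controls $\|\psi\star_r^r\phi\|$ by $\|\psi\|_{L^2}\|\phi\|_{L^2}$, which is a different quantity and an overestimate, while parts (ii)--(iii) bound a contraction norm in terms of \emph{self-contractions} of the two factors, not in terms of $\|g_j\star_a^b g_m\|$ directly. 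What the proof actually needs is not an inequality at all but an equality: after relabelling, the generic term $G_\alpha$ is the function $g_j\star_a^b g_m$ viewed on the larger space $E^{i+k-r-l}$, where it simply does not depend on the extra $((i+k-r-l)-(j+m-a-b))$ coordinates; since $\mu$ is a probability measure, those coordinates integrate out trivially and one gets
\begin{align*}
\int_{E^{i+k-r-l}} G_\alpha^2\, d\mu^{\otimes(i+k-r-l)} \;=\; \|g_j\star_a^b g_m\|^2_{L^2(\mu^{j+m-a-b})}.
\end{align*}
No contraction lemma enters. The bookkeeping that produces the identification $G_\alpha = g_j\star_a^b g_m$ relies on the marginalization identity $\int_{E^{k-s}} g_k\, d\mu^{\otimes(k-s)} = g_s$ (equation \eqref{itg} in the paper), which is what makes the effective orders $j,m$ drop below the nominal ones; this step deserves to be spelled out in your plan rather than being absorbed into ``orders can only shrink''.

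Two smaller issues. First, you assert that since the $g_l$ lie in $L^2(\mu^l)$ ``all the $\star$-operations are legitimate and the norms are finite''; that is false in general (e.g.\ $g_p\star_p^0 g_p=\psi^2$ needs $\psi\in L^4$), but it also is not needed: the lemma's inequality is to be read with both sides allowed to be $+\infty$, and the argument proves it as an inequality of extended reals. Second, your heuristic for condition 7.\ (``a partially integrated copy of $\psi$ would vanish or reduce to a lower-order $g$'') is not the mechanism. The point is direct: $g_p$ can occur in the expansion of $\psi_i$ only if $i=p$ (it is the top term of the alternating sum), so $j=m=p$ forces $i=k=p$, and then the $g_p\star g_p$ term inherits the full overlap $a=r$ and full integration $b=l$; nothing is being ``lowered'' or ``vanishing''.
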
  
\begin{proof}[\bf Proof of Lemma \ref{genulemma}]
 Recall that we have 
\begin{align}\label{cb9}
 &\bigl(\psi_{i}\star_r^l\psi_{k}\bigr)(x_1,\dotsc,x_{k+i-2r},y_{l+1},\dotsc,y_r)\notag\\
 &=\int_{E^l}\psi_i(y_1,\dotsc,y_r,x_1,\dotsc,x_{i-r})\notag\\
 &\hspace{2cm}\cdot \psi_k(y_1,\dotsc,y_r,x_{i-r+1},\dotsc,x_{k+i-2r})d\mu^{\otimes l}(y_{1},\dots,y_l)\,.
\end{align}
Recalling also the expression \eqref{defpsis} of the kernels $\psi_i$ and $\psi_k$, respectively, and taking into account that $\mu$ is a probability measure as well as that, for $k\geq s$, the functions $g_k$ from \eqref{gk} satsify 
\begin{equation}\label{itg}
\int_{E^{k-s}}g_k(x_1,\dotsc,x_k)d\mu^{\otimes{k-s}}(x_{s+1},\dotsc,x_k)=g_s(x_1,\dotsc,x_s)
\end{equation}
by virtue of Fubini's theorem, we see that $ \bigl(\psi_{i}\star_r^l\psi_{k}\bigr)(x_1,\dotsc,x_{k+i-2r},y_{l+1},\dotsc,y_{r})$ is a linear combination, with coefficients only depending on $i,k,r$ and $l$ but not on $n$, of expressions of the form 
\begin{align*}
&G_{(a,b,j,m)}(x_{i_1},\dotsc,x_{i_{j-b-d}},y_{q_1},\dotsc, y_{q_c},x_{k_1},\dotsc, x_{k_{m-b-e}})\\
& :=\int_{E^t} g_j(u_1,\dotsc,u_b,y_{m_1},\dotsc,y_{m_d},  x_{i_1},\dotsc,x_{i_{j-b-d}}) \\
&\hspace{2cm}\cdot g_m(u_1,\dotsc,u_b,y_{n_1},\dotsc,y_{n_e},x_{k_1},\dotsc, x_{k_{m-b-e}})d\mu^{\otimes b}(u_1,\dots,u_b)
\\&=\bigl(g_j\star_a^b g_m\bigr)(x_{i_1},\dotsc,x_{i_{j-b-d}},y_{q_1},\dotsc, y_{q_c},x_{k_1},\dotsc, x_{k_{m-b-e}})
\,,
\end{align*}
where $0\leq j\leq i$, $0\leq m\leq k$, $0\leq b\leq l$, $0\leq b\leq a\leq r$, $1\leq i_1<\ldots<i_{j-b-d}\leq i-r$,\\
$i-r+1\leq k_1<\ldots<k_{l-b-e}\leq k+i-2r$ such that, in particular, the sets $\{i_1,\dotsc,i_{j-t-a}\}$ and $\{k_1,\dotsc,k_{m-t-b}\}$ are disjoint. 
Furthermore, we have $l+1\leq m_1<\ldots<m_d\leq r$, $l+1\leq n_1<\ldots<n_e\leq r$, $l+1\leq q_1<\ldots<q_c\leq r$ such that $\{q_1,\dotsc,q_c\}=\{m_1,\dotsc,m_d\}\cup\{n_1,\dotsc,n_e\}$, $d\leq j-b$, $e\leq m-b$ and 
$a:=b+|\{m_1,\dotsc,m_d\}\cap\{n_1,\dotsc,n_e\}|\leq b+c$. Note that $c\leq r-l$ and, hence, also $a-b\leq c\leq r-l$ as well as $a\leq(b+d)\wedge(b+e)\leq j\wedge m$.
Moreover, the number $j+m-a-b$ of arguments of the function $g_j\star_a^b g_m$ is at most as large as the number 
$i+k-r-l$ of arguments of the function $\psi_{i}\star_r^l\psi_{k}$. Finally, if $j=m=p$, then $i=k=p$ and $g_j=g_m=\psi$. This also implies that $b=l$ and $a=r$. Hence, we conclude that $(j,m,a,b)\in Q(i,k,r,l)$. 

Now, using the fact that $\mu$ is a probability measure, we obtain that 
\begin{align}\label{gl1}
 &\int_{E^{k+i-r-l}}G^2_{(a,b,j,m)}(x_{i_1},\dotsc,x_{i_{j-b-d}},y_{q_1},\dotsc, y_{q_c},x_{k_1},\dotsc, x_{k_{m-b-e}})\notag\\
 &\hspace{3cm} d\mu^{\otimes i+k-l-r}(x_1,\dotsc,x_{k+i-2r},y_{l+1},\dotsc,y_r)\notag\\
 &=\int_{E^{j+m-a-b}} \bigl(g_j\star_a^b g_m\bigr)^2d \mu^{\otimes j+m-a-b}=\|g_j\star_a^bg_m\|^2_{L^2(\mu^{\otimes j+m-a-b})}\,.
\end{align}
Since $\psi_{i}\star_r^l\psi_{k}$ is a finite linear combination with coefficients depending uniquely on $i,k,r$ and $l$ of the $G_{(a,b,j,m)}$, the claim thus follows from \eqref{gl1} and 
Minkowski's inequality.
\end{proof}

The next result is a direct consequence of Theorem \ref{fiditheo}, of Lemma \ref{genulemma} and of the fact that, for all $1\leq v\leq p$, we have 
\begin{align*}
 \norm{\psi_{v}}^2_{L^2(\mu^{v})}=\sum_{l=1}^v\binom{v}{l}(-1)^{v-l}\norm{\hat{g}_{l}}^2_{L^2(\mu^{l})}\,,
\end{align*}
where $\hat{g}_l:=g_l-\E[\psi(X_1,\dotsc,X_p)]\,, 1\leq l\leq p$ (see e.g. \cite[Theorem 4.3]{Vit92}). 

\begin{corollary}\label{fidicor}
 With the above notation and definitions, the vector $(W_1,\dotsc,W_m)^T$ converges, as $n\to\infty$, to a multivariate normal distribution if the following conditions hold for all $1\leq v\leq u\leq p$:
\begin{enumerate}[{\normalfont (a)}]
 \item the real limit $\lim_{n\to\infty}\frac{n^{2p-v}}{\sigma_n^2}\bigl(\norm{g_{v}}^2_{L^2(\mu^{v})}-(\E[\psi(X_1,\dotsc,X_p)])^2\bigr)$ exists, and
 \item $\displaystyle \lim_{n\to\infty}\frac{n^{2p-\frac{u+v+r-l}{2}}}{\sigma_n^2}\,\norm{g_{j}\star_i^{k}g_{m}}_{L^2(\mu^{j+m-i-k})}=0$
for all pairs $(l,r)$ and quadruples $(j,m,i,k)$ of integers such that\\ 
 $1\leq r\leq v$, $0\leq l\leq r\wedge(u+v-r-1)$ and $(j,m,i,k)\in Q(v,u,r,l)$.
 \end{enumerate}
\end{corollary}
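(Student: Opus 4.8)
The plan is to derive Corollary \ref{fidicor} directly from Theorem \ref{fiditheo}: it suffices to check that hypotheses (a) and (b) of the corollary (stated in terms of the functions $g_l$ from \eqref{gk}) imply hypotheses (a) and (b) of Theorem \ref{fiditheo} (stated in terms of the degenerate kernels $\psi_v$ from \eqref{defpsis}), after which Theorem \ref{fiditheo} yields the asserted multivariate normal convergence of $(W_1,\dots,W_m)^T$.

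First I would address the contraction condition. Fix integers $1\le v\le u\le p$ together with a pair $(l,r)$ such that $1\le r\le v$ and $0\le l\le r\wedge(u+v-r-1)$; then $0\le l\le r\le v=v\wedge u$, so Lemma \ref{genulemma}, applied with $(i,k)=(v,u)$, gives
\[
\norm{\psi_v\star_r^l\psi_u}_{L^2(\mu^{v+u-r-l})}\le K(v,u,r,l)\max_{(j,m,a,b)\in Q(v,u,r,l)}\norm{g_j\star_a^b g_m}_{L^2(\mu^{j+m-a-b})}.
\]
Since $Q(v,u,r,l)$ is a finite set, multiplying both sides by $n^{2p-\frac{u+v+r-l}{2}}/\sigma_n^2$ exhibits the left-hand side as bounded by a finite maximum of quantities, each of which tends to $0$ by hypothesis (b) of the corollary (applied to the quadruple $(j,m,i,k)=(j,m,a,b)$). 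Hence the left-hand side tends to $0$, which is precisely hypothesis (b) of Theorem \ref{fiditheo}.

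Next I would verify hypothesis (a) of Theorem \ref{fiditheo}, i.e. the existence of $\lim_{n\to\infty}\frac{n^{2p-v}}{\sigma_n^2}\norm{\psi_v}^2_{L^2(\mu^v)}$ for each $1\le v\le p$. Writing $\hat{g}_l:=g_l-\E[\psi(X_1,\dots,X_p)]$ and using that $\E[g_l(X_1,\dots,X_l)]=\E[\psi(X_1,\dots,X_p)]$ (which follows from \eqref{gk} and the tower property), one has
\[
\norm{\hat{g}_l}^2_{L^2(\mu^l)}=\norm{g_l}^2_{L^2(\mu^l)}-\bigl(\E[\psi(X_1,\dots,X_p)]\bigr)^2=\Var\bigl(g_l(X_1,\dots,X_l)\bigr),
\]
so hypothesis (a) of the corollary asserts exactly that $b_l^2:=\lim_n\frac{n^{2p-l}}{\sigma_n^2}\norm{\hat{g}_l}^2_{L^2(\mu^l)}$ exists and is finite, $1\le l\le p$. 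Now I would substitute the identity $\norm{\psi_v}^2_{L^2(\mu^v)}=\sum_{l=1}^v\binom{v}{l}(-1)^{v-l}\norm{\hat{g}_l}^2_{L^2(\mu^l)}$ (see \cite[Theorem 4.3]{Vit92}): after multiplication by $n^{2p-v}/\sigma_n^2$, the term with index $l$ equals $\binom{v}{l}(-1)^{v-l}n^{l-v}\cdot\frac{n^{2p-l}}{\sigma_n^2}\norm{\hat{g}_l}^2_{L^2(\mu^l)}$, which tends to $b_v^2$ when $l=v$ and to $0$ when $l<v$, because the last factor converges while $n^{l-v}\to 0$. Consequently $\lim_n\frac{n^{2p-v}}{\sigma_n^2}\norm{\psi_v}^2_{L^2(\mu^v)}=b_v^2$ exists, which is hypothesis (a) of Theorem \ref{fiditheo}.

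Having checked both hypotheses of Theorem \ref{fiditheo}, that theorem yields the asserted convergence. I do not anticipate a genuine obstacle here: the argument is a reduction, and the only points that require (routine) care are the verification that the index set $Q(v,u,r,l)$ is finite — so that the maximum produced by Lemma \ref{genulemma} is harmless in the limit — and the elementary inclusion–exclusion bookkeeping that converts the $g_l$-form of hypothesis (a) into its $\psi_v$-form.
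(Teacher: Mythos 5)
Your proof is correct and follows the same route the paper uses: Theorem \ref{fiditheo} combined with Lemma \ref{genulemma} to pass from $g$-contractions to $\psi$-contractions, and the identity $\norm{\psi_v}^2_{L^2(\mu^v)}=\sum_{l=1}^v\binom{v}{l}(-1)^{v-l}\norm{\hat{g}_l}^2_{L^2(\mu^l)}$ (with $\hat{g}_l:=g_l-\E[\psi]$) together with the vanishing of the $n^{l-v}$ factors for $l<v$ to pass condition (a). The paper itself presents the corollary as a direct consequence of exactly these three ingredients, so your argument is essentially the intended one.
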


\begin{remark}\label{fidirem}{\rm
\begin{enumerate}[{\normalfont (i)}]
\item The conditions given in Theorem \ref{fiditheo} and in Corollary \ref{fidicor} do not depend on the finite sequence $0<t_1<\ldots<t_m\leq1$ used to define the vector $(W_1,\dotsc,W_m)^T$. Hence, both statements yield sufficient conditions 
for f.d.d. convergence of the sequence $(W_n(t))_{t\in[0,1]}$ of processes.
 \item  Note that the respective conditions {\normalfont (b)} of Theorem \ref{fiditheo} and of Corollary \ref{fidicor} are the same as those we would get to obtain the asymptotic normality of the single $U$-statistic $J_p^{(n)}(\psi/\sigma_n)$ 
 (see \cite[Section 5]{DP18}). 
 \item It is easy to see from the following computation that the respective conditions {\normalfont (a)} in Theorem \ref{fiditheo} and in Corollary \ref{fidicor} imply that the covariance function of $W_n$ converges 
pointwise to an explicit limit. Such a condition was not necessary in the univariate case dealt with in \cite[Section 5]{DP18} since there the $U$-statistic could simply be 
 normalized to have variance one. For $s,t\in[0,1]$ we have 
\begin{align*}
 \Cov\bigl(W_n(s),W_n(t)\bigr)&=\E\bigl[W_n(s)W_n(t)\bigr]\\
 &=\sigma_n^{-2}\sum_{k,l=1}^p\binom{\floor{ns}-k}{p-k}\binom{\floor{nt}-l}{p-l}\E\bigl[J_k^{(\floor{ns})}(\psi_k)J_l^{(\floor{nt})}(\psi_l)\bigr]\\
 &=\sigma_n^{-2}\sum_{k=1}^p\binom{\floor{ns}-k}{p-k}\binom{\floor{nt}-k}{p-k}\binom{\floor{ns}\wedge\floor{nt}}{k}\|\psi_k\|_{L^2(\mu^k)}^2\\
 &\sim\sigma_n^{-2}\sum_{k=1}^p \frac{\floor{ns}^{p-k}}{(p-k)!}\frac{\floor{nt}^{p-k}}{(p-k)!}\frac{\bigl(\floor{ns}\wedge\floor{nt}\bigr)^k}{k!}\|\psi_k\|_{L^2(\mu^k)}^2\\
 &\sim\sum_{k=1}^p  \frac{(s\wedge t)^p (s\vee t)^{p-k}}{k!(p-k)!(p-k)!}\frac{ n^{2p-k} }{\sigma_n^{2}} \|\psi_k\|_{L^2(\mu^k)}^2\,.
\end{align*}
Note further that, using \eqref{sigma}, we can conclude that for fixed $s,t\in[0,1]$, the sequence $\Cov\bigl(W_n(s),W_n(t)\bigr)$, $n\in\N$, is always bounded.
\end{enumerate}
}
\end{remark}

\subsection{Criteria for tightness}
We are going to establish tightness using Lemma \ref{lemma_tightness} and, as a result, obtain the following theorem:

\begin{theorem}[\bf Tightness of general $U$-processes]\label{gentightness}
 Let $p\in\N$ and suppose that $\psi=\psi(n)\in L^4(\mu^p)$, $n\geq p$, is a sequence of symmetric kernels. For $t\in[0,1]$ let $U(t):=U_n(t):=J_p^{(\floor{nt})}(\psi)$ and define $W(t) = W_n(t)$ by \eqref{genW}, where 
 $\sigma_n^2:=\Var(U_n(1))=\Var(J_p^{(n)}(\psi))$. 
Suppose that there is an a.s. continuous Gaussian process $Z=(Z(t))_{t\in[0,1]}$ such that the finite-dimensional distributions of $W_n$, $n\in \N$, converge to those of $Z$.
Consider the following conditions:
 \begin{enumerate}[{\normalfont (i)}]
  \item There is an $\epsilon>0$ such that for all $1\leq r\leq p$ and all $0\leq l\leq r-1$, the sequence 
 \[\frac{n^{2p-r-\frac{r-l}{2}+\epsilon}}{\sigma_n^2}\bigl\|\psi_{r}\star_r^{l}\psi_{r}\bigr\|_{L^2(\mu^{r-l})}\]
 is bounded.
 \item There is an $\epsilon>0$ such that for all $1\leq r\leq p$, all $0\leq l\leq r-1$ and for all quadruples $(j,m,a,b)\in Q(r,r,r,l)$ the sequence 
\[\frac{n^{2p-r-\frac{r-l}{2}+\epsilon}}{\sigma_n^2}\,\norm{g_{j}\star_a^{b}g_{m}}_{L^2(\mu^{j+m-a-b})}\]
is bounded.
\end{enumerate}
Then, one has that {\rm (ii)} $ \Rightarrow$ {\rm (i)}, and also that {\rm (i)} is sufficient in order for the sequence $W_n$, $n\in \N$, to be tight in $D[0,1]$.
\end{theorem}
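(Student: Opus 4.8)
The plan is to deduce the statement directly from Lemma \ref{lemma_tightness}, which reduces tightness to a moment bound of the form $\E|W_n(t)-W_n(s)|^\beta \lesssim ((\floor{nt}-\floor{ns})/n)^{1+\alpha}$, and from Proposition \ref{pform} (the product formula) together with the contraction estimates in Lemma \ref{contrlemma}. The implication (ii) $\Rightarrow$ (i) should come first, and it is essentially an application of Lemma \ref{genulemma}: for $i=k=r$ and $0\le l\le r-1$, that lemma bounds $\|\psi_r\star_r^l\psi_r\|_{L^2(\mu^{r-l})}$ by a constant times $\max_{(j,m,a,b)\in Q(r,r,r,l)}\|g_j\star_a^b g_m\|_{L^2(\mu^{j+m-a-b})}$, so multiplying through by $n^{2p-r-(r-l)/2+\epsilon}/\sigma_n^2$ and using boundedness of each term on the right-hand side (hypothesis (ii)) gives boundedness of the left-hand side, which is (i).

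For the main implication — that (i) suffices for tightness — I would first reduce to estimating, for fixed $0\le s<t\le 1$, the quantity $\E|W_n(t)-W_n(s)|^\beta$ for a suitable even integer $\beta$; taking $\beta=4$ is the natural choice since Lemma \ref{contrlemma}(iv) and the product formula are tailored to fourth-order computations and the kernels lie in $L^4(\mu^p)$. Writing $W_n(t)-W_n(s) = \sum_{k=1}^p \frac{\binom{\floor{nt}-k}{p-k}}{\binom{n-k}{p-k}}V_k(t) - \frac{\binom{\floor{ns}-k}{p-k}}{\binom{n-k}{p-k}}V_k(s)$ with $V_k(u)=J_k^{(\floor{nu})}(\phi^{(k)})$, I would split each $k$-term into a ``range increment'' piece $J_k^{(\floor{nt})}(\phi^{(k)})-J_k^{(\floor{ns})}(\phi^{(k)})$ — a degenerate $U$-statistic over the index set $\{(i_1<\dots<i_k): i_k\le \floor{nt}\}\setminus\{i_k\le\floor{ns}\}$, whose size is $O((\floor{nt}-\floor{ns})\,n^{k-1})$ — and a ``coefficient increment'' piece coming from $\binom{\floor{nt}-k}{p-k}-\binom{\floor{ns}-k}{p-k}$, which carries a factor $\floor{nt}-\floor{ns}$ and is thus also controlled by $(\floor{nt}-\floor{ns})/n$. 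For each piece one computes the fourth moment (equivalently, the variance of the square) by expanding via Proposition \ref{pform}, organizes the resulting Hoeffding components by their $L^2$-norms $\|\psi_r\star_r^l\psi_r\|$, and invokes (i) to see that the total is $\lesssim ((\floor{nt}-\floor{ns})/n)^{1+\epsilon}$ up to constants. The upshot is a bound of the form $\E|W_n(t)-W_n(s)|^4 \lesssim ((\floor{nt}-\floor{ns})/n)^{1+\epsilon}$, which is precisely \eqref{Bil2.1} with $\beta=4$ and $\alpha=\epsilon$; Lemma \ref{lemma_tightness} then yields tightness.

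The main obstacle I anticipate is the bookkeeping in the fourth-moment expansion: after squaring the increment and taking expectations, one must control a large number of cross-terms $\E[J_k^{(\cdot)}(\phi^{(k)})J_{k'}^{(\cdot)}(\phi^{(k')})\cdots]$, and the product formula of Proposition \ref{pform} must be applied carefully to track exactly which contraction norms $\|\psi_r\star_r^l\psi_r\|_{L^2(\mu^{r-l})}$ appear and with which powers of $n$. The key technical point — and the reason the exponent $2p-r-(r-l)/2+\epsilon$ appears in hypothesis (i) — is that the combinatorial prefactors from the product formula (binomial coefficients counting index overlaps) together with the normalization $\sigma_n^{-2}$ produce exactly the power $n^{2p-r-(r-l)/2}$ multiplying each $\|\psi_r\star_r^l\psi_r\|$, so that (i) guarantees each such term is $O(n^{-\epsilon})$, and the additional $(\floor{nt}-\floor{ns})$-factors extracted from the range and coefficient increments upgrade $n^{-\epsilon}$ to the required $((\floor{nt}-\floor{ns})/n)^{1+\epsilon}$. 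A secondary point is that one must ensure the diagonal ($l=r$) contractions, which are merely $\|\psi_r\|_{L^2(\mu^r)}^2$ and contribute the limiting variance rather than a vanishing term, are handled by the coefficient-increment estimate alone (where the $(\floor{nt}-\floor{ns})/n$ factor is genuinely squared), so that no decay beyond that is needed for them — this is why (i) only constrains $l\le r-1$.
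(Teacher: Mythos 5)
Your handling of \textrm{(ii)} $\Rightarrow$ \textrm{(i)} via Lemma \ref{genulemma} is exactly what the paper does, and the reduction of the general case to degenerate pieces (splitting off the ``coefficient increment'' carried by the factors $\binom{\lfloor nt\rfloor-k}{p-k}/\binom{n-k}{p-k}$, which the paper handles equivalently by reducing tightness of $W_n$ to tightness of each $V_r$) is also sound. The choice $\beta=4$ in Lemma \ref{lemma_tightness} is right.

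The core of the argument, however, has a genuine gap. You propose to estimate the fourth moment of the degenerate ``range increment'' $V_k(t)-V_k(s)=J_k^{(\lfloor nt\rfloor)}(\phi^{(k)})-J_k^{(\lfloor ns\rfloor)}(\phi^{(k)})$ by expanding via the product formula (Proposition \ref{pform}). But Proposition \ref{pform} is a Hoeffding decomposition for products of \emph{complete} symmetric $U$-statistics $J_p^{(n)}(\psi)J_q^{(m)}(\phi)$; the increment $V_k(t)-V_k(s)$ is a degenerate $U$-statistic over the restricted index family $\{J\in\D_k(\lfloor nt\rfloor): J\cap\{\lfloor ns\rfloor+1,\dots,\lfloor nt\rfloor\}\neq\emptyset\}$ and is not itself of this form. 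To apply the product formula one must expand $(A-B)^2=A^2-2AB+B^2$ with $A=J_k^{(\lfloor nt\rfloor)}(\phi^{(k)})$, $B=J_k^{(\lfloor ns\rfloor)}(\phi^{(k)})$ and combine the three Hoeffding decompositions; but the bound \eqref{varum} on each $\Var(U_M)$ uses the triangle inequality, which destroys the cancellation between the three binomial-coefficient prefactors that is responsible for the crucial factor $(\lfloor nt\rfloor-\lfloor ns\rfloor)/n$. Without that cancellation one only recovers $\E|V_k(t)-V_k(s)|^4=O(1)$, not the required $O\bigl(((\lfloor nt\rfloor-\lfloor ns\rfloor)/n)^{1+\epsilon}\bigr)$. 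The paper sidesteps this entirely by invoking the Ibragimov--Sharakhmetov moment inequality \cite[Theorem 2]{ISH} (advertised in the abstract as one of the key tools): this is a two-sided bound on absolute moments of degenerate $U$-statistics over \emph{arbitrary} index sets, expressed through conditional second moments, and the restriction $(L\cup K)\cap I(n,s,t)\neq\emptyset$ appearing there directly produces one or two factors of $\lfloor nt\rfloor-\lfloor ns\rfloor$ before any contraction norm is estimated. That moment inequality is the missing ingredient; either you need it (or an equivalent), or you need to extend Proposition \ref{pform} to partial index sets and track the binomial cancellation exactly, which is a substantial additional argument not supplied here. Also note, as a secondary indicator that the approach has drifted: Lemma \ref{contrlemma}(iv) plays no role in the paper's tightness proof.
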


The proof of Theorem \ref{gentightness} is detailed in the forthcoming Subsections \ref{ss:tgd} and \ref{ss:tgg}.
There, we are however not going to establish \eqref{Bil} directly, but will show that there is a finite constant $C_1>0$ such that, under the assumptions of Theorem \ref{gentightness}, for all $n\in\N$ and all 
$0\leq s\leq t\leq1$ we have the inequality
\begin{align}\label{Bil2}
 \E\babs{W_n(t)-W_n(s)}^4&\leq  C_1\biggl(\frac{\floor{nt}-\floor{ns}}{n}\biggr)^{1+\epsilon}\,,
\end{align}
where $\epsilon$ is the same as in the statement of Theorem \ref{gentightness}. This is sufficient by Lemma \ref{lemma_tightness}.
\subsubsection{Proof of Theorem \ref{gentightness}, I: degenerate kernels}\label{ss:tgd} Throughout the present and subsequent section, we can assume without loss of generality that $\epsilon \in (0,1]$
Let us first assume that $W_n$ is a $U$-process of order $p$ based on a degenerate kernel $\phi$, i.e. for $0\leq t\leq1$ we have
\begin{equation*}
 W(t):=W_n(t):=J_p^{(\floor{nt})}(\phi)\,.
\end{equation*}
For $0\leq s\leq t\leq 1$ let $I(n,s,t):=\{\floor{ns}+1,\dotsc\floor{nt}\}$ and for $J\in\D_p(\floor{nt})$ write 
\begin{equation*}
 V_J:={\bf 1}_{\{J\cap I(n,s,t)\not=\emptyset\}}\phi(X_i,i\in J)\,.
\end{equation*}
Then we have that 
\begin{align*}
 W_n(t)-W_n(s)=\sum_{J\in\D_p(\floor{nt})} V_J
 \end{align*}
is a degenerate (non-symmetric) $U$-statistic of order $p$, based on $X_1,\dotsc,X_{\floor{nt}}$. 
In \cite[Theorem 2]{ISH}, the following bounds are given:
\begin{align}\label{is}
 &A\max_{k=0,\dotsc,p}\,\max_{1\leq j_1<\ldots<j_k\leq p}\,\sum_{1\leq i_{j_1}<\ldots<i_{j_k}\leq \floor{nt}}
 \E\Biggl[\biggl(\hspace{-2mm}\sum_{\substack{i_s: 1\leq s\leq p,\\ s\notin\{j_1,\dotsc,j_k\},\\ 1\leq i_1<\ldots<i_p\leq\floor{nt}}}\hspace{-1mm}\E\bigl[V^2_{\{i_1,\dotsc,i_p\}}\,\bigl|\,X_{i_{j_1}},\dotsc,X_{i_{j_1}}\bigr]\biggr)^2\Biggr]\notag\\
 &\leq \E\babs{W_n(t)-W_n(s)}^4\\
 &\leq B \max_{k=0,\dotsc,p}\,\max_{1\leq j_1<\ldots<j_k\leq p}\,\sum_{1\leq i_{j_1}<\ldots<i_{j_k}\leq \floor{nt}}
 \E\Biggl[\biggl(\hspace{-4mm}\sum_{\substack{i_s: 1\leq s\leq p,\\ s\notin\{j_1,\dotsc,j_k\},\\ 1\leq i_1<\ldots<i_p\leq\floor{nt}}}\hspace{-4mm}\E\bigl[V^2_{\{i_1,\dotsc,i_p\}}\,\bigl|\,X_{i_{j_1}},\dotsc,X_{i_{j_1}}\bigr]\biggr)^2\Biggr]\notag,
\end{align}
where $A$ and $B$ are finite constants which only depend on $p$. We will now make use of the upper bound in \eqref{is}. Note that for each $k=0,\dotsc,p$, each labelling $1\leq j_1<\ldots<j_k\leq p$ and 
all choices $1\leq i_{j_1}<\ldots<i_{j_k}\leq \floor{nt}$ 
we have that 
\begin{align}
 &\E\Biggl[\biggl(\sum_{\substack{i_s: 1\leq s\leq p,\\ s\notin\{j_1,\dotsc,j_k\},\\ 1\leq i_1<\ldots<i_p\leq\floor{nt}}}\E\bigl[V^2_{\{i_1,\dotsc,i_p\}}\,\bigl|\,X_{i_{j_1}},\dotsc,X_{i_{j_k}}\bigr]\biggr)^2\Biggr]\notag\nonumber\\
&\leq \E\Biggl[\biggl(\sum_{\substack{L\in\D_{p-k}(\floor{nt}):\\L\cap \{i_{j_1},\dotsc,i_{j_k}\}=\emptyset}}\E\bigl[V_{\{i_{j_1},\dotsc,i_{j_k}\} \cup L}^2\,\bigl|\,X_{i_{j_1}},\dotsc,X_{i_{j_k}}\bigr]\biggr)^2\Biggr]\label{is1}
 \end{align}
and, that for each set $L\in\D_{p-k}(\floor{nt})$ such that $L\cap \{i_{j_1},\dotsc,i_{j_k}\}=\emptyset$ and \\$\bigl(L\cup \{i_{j_1},\dotsc,i_{j_k}\}\bigr)\cap I(n,s,t)\not=\emptyset$ we have 
\begin{align*}
 \E\bigl[V_{\{i_{j_1},\dotsc,i_{j_k}\} \cup L}^2\,\bigl|\,X_{i_{j_1}},\dotsc,X_{i_{j_1}}\bigr]=\bigl(\phi\star_p^{p-k}\phi\bigr)\bigl(X_{i_{j_1}},\dotsc,X_{i_{j_k}}\bigr)\,.
\end{align*}
Thus, we can further bound
\begin{align}
&\E\babs{W_n(t)-W_n(s)}^4\leq B \max_{k=0,\dotsc,p} \sum_{\substack{K\in\D_k(\floor{nt})}}\E\Biggl[\biggl(\sum_{\substack{L\in\D_{p-k}(\floor{nt}):\\L\cap K=\emptyset,\\ (L\cup K)\cap I(n,s,t)\not=\emptyset}}\E\bigl[V_{K\cup L}^2\,\bigl|\,\F_K\bigr]\biggr)^2\Biggr]\label{nub}\\
&=B \max_{k=0,\dotsc,p}\Biggl(\sum_{\substack{K\in\D_k(\floor{nt}):\\ K\cap I(n,s,t)=\emptyset}}\E\Biggl[\biggl(\binom{\floor{nt}-k}{p-k}-\binom{\floor{ns}-k}{p-k}\biggr)^2\bigl(\phi\star_p^{p-k}\phi\bigr)^2(X_i,i\in K)\biggr)\Biggr]\notag\\
&\hspace{2cm} +\sum_{\substack{K\in\D_k(\floor{nt}):\\ K\cap I(n,s,t)\not=\emptyset}}\E\Biggl[\binom{\floor{nt}-k}{p-k}^2\bigl(\phi\star_p^{p-k}\phi\bigr)^2(X_i,i\in K)\biggr)\Biggr]\Biggr)\notag\\
&\leq B \max_{k=0,\dotsc,p-1}\binom{\floor{ns}}{k}\biggl(\binom{\floor{nt}-k}{p-k}-\binom{\floor{ns}-k}{p-k}\biggr)^2  \E\Biggl[\bigl(\phi\star_p^{p-k}\phi\bigr)^2(X_i,i\in K)\Biggr]\notag\\
&\; +B \max_{k=1,\dotsc,p}\biggl(\binom{\floor{nt}}{k}-\binom{\floor{ns}}{k}\biggr)\binom{\floor{nt}-k}{p-k}^2  \E\Biggl[\bigl(\phi\star_p^{p-k}\phi\bigr)^2(X_i,i\in K)\Biggr]\notag\\
&=B \max_{k=0,\dotsc,p-1}\binom{n-k}{p-k}^2 \binom{\floor{ns}}{k}\binom{n-k}{p-k}^{-2} \biggl(\binom{\floor{nt}-k}{p-k}-\binom{\floor{ns}-k}{p-k}\biggr)^2\notag  \\
&\hspace{10cm}\bigl\|\phi\star_p^{p-k}\phi\bigr\|_{L^2(\mu^k)}^2\notag\\
&\; +B \max_{k=1,\dotsc,p}\binom{n}{k} \binom{n}{k}^{-1} \biggl(\binom{\floor{nt}}{k}-\binom{\floor{ns}}{k}\biggr)\binom{\floor{nt}-k}{p-k}^2  \bigl\|\phi\star_p^{p-k}\phi\bigr\|_{L^2(\mu^k)}^2\notag\\
&\lesssim  \max_{k=0,\dotsc,p-1}\biggl(\frac{\floor{nt}-\floor{ns}}{n}\biggr)^2 n^{2p-k} \bigl\|\phi\star_p^{p-k}\phi\bigr\|_{L^2(\mu^k)}^2\notag\\
&\; + \max_{k=1,\dotsc,p}\frac{\floor{nt}-\floor{ns}}{n} n^{2p-k}  \bigl\|\phi\star_p^{p-k}\phi\bigr\|_{L^2(\mu^k)}^2\notag\\
&\lesssim  \biggl(\frac{\floor{nt}-\floor{ns}}{n}\biggr)^2 n^{2p} \norm{\phi}_{L^2(\mu^p)}^4 + \frac{\floor{nt}-\floor{ns}}{n} \max_{k=1,\dotsc,p} n^{2p-k}  \bigl\|\phi\star_p^{p-k}\phi\bigr\|_{L^2(\mu^k)}^2\notag\\
&\lesssim \biggl(\frac{\floor{nt}-\floor{ns}}{n}\biggr)^{1+\epsilon}\biggl(n^{2p} \norm{\phi}_{L^2(\mu^p)}^4 + \max_{k=1,\dotsc,p} n^{2p-k+\epsilon}  \bigl\|\phi\star_p^{p-k}\phi\bigr\|_{L^2(\mu^k)}^2\biggr)\label{tight1}\,,
\end{align}
for each $\epsilon\in(0,1]$.
Now, if, for some $\epsilon\in(0,1]$, there is a $C_1=C_1(\epsilon)\in(0,\infty)$ such that for all $n\in\N$ we have 
\begin{align}\label{tight2}
 n^{2p} \norm{\phi}_{L^2(\mu^p)}^4 + \max_{k=1,\dotsc,p} n^{2p-k+\epsilon}  \bigl\|\phi\star_p^{p-k}\phi\bigr\|_{L^2(\mu^k)}^2\leq C_1
\end{align}
we conclude from \eqref{tight1} that \eqref{Bil2} is satisfied. This concludes the argument in the case of degenerate kernels.
\begin{remark}
{\rm{Incidentally, one can show that inequality \eqref{nub} also holds in the opposite direction when the constant $B$ appearing there is replaced with a small enough positive constant $C$, which only depends on $p$. Our way of bounding $\mathbb{E}|W_n(t)-W_n(s)|^4$ is therefore optimal with respect to the order in $n$.}}
\end{remark}

\subsubsection{Proof of Theorem \ref{gentightness}, II: general kernels}\label{ss:tgg}

For $t\in[0,1]$ recall the definition \eqref{genW} of $W_n(t)$ and \eqref{sigma} of $\sigma_n^2$.
Then, defining for $1\leq r\leq p$, 
\begin{equation*}
 \phi^{(r)}:=\phi^{(n,r)}:=\frac{\binom{n-r}{p-r} \psi_r}{\sigma_n}\,,
\end{equation*}
the $\phi^{(r)}$ are degenerate kernels and, with the notation $V_r(t):= J_r^{(\floor{nt})}(\phi^{(r)})$, we have 
\begin{equation*}
 W(t)=\sum_{r=1}^p \frac{\binom{\floor{nt}-r}{p-r}}{ \binom{n-r}{p-r}}V_r(t)\,.
\end{equation*}
Since 
\[\lim_{n\to\infty}\frac{\binom{\floor{nt}-r}{p-r}}{ \binom{n-r}{p-r}}=t^r\]
for each $r=1,\dotsc,p$, by an application of Prohorov's theorem, it follows that the sequence $(W_n(t))_{t\in[0,1]}$, $n\in\N$, is tight whenever $(V_r(t))_{t\in[0,1]}$ is tight for every $r=1,\dotsc,p$.

Sufficient conditions for this to hold were given in the previous subsection. Indeed, $(V_r(t))_{t\in[0,1]}$ is tight if for some $\epsilon\in(0,1]$ the sequence
\begin{align}\label{tight4}
 n^{2r} \norm{\phi^{(r)}}_{L^2(\mu^r)}^4 + \max_{k=1,\dotsc,r} n^{2r-k+\epsilon}  \bigl\|\phi^{(r)}\star_r^{r-k}\phi^{(r)}\bigr\|_{L^2(\mu^r)}^2
\end{align}
is bounded from above by a constant. Now, first note that from \eqref{sigma} we see that there is a finite constant $L_r$ such that 
\begin{align}
  n^{2r} \norm{\phi^{(r)}}_{L^2(\mu^r)}^4 &=  n^{2r} \binom{n-r}{p-r}^4 \frac{\norm{\psi_r}_{L^2(\mu^r)}^4}{\sigma_n^4}\leq L_r
\end{align}
for all $n\in\N$. Moreover, for $k=1,\dotsc,r$ we have that 
\begin{align*}
  n^{2r-k+\epsilon}  \bigl\|\phi^{(r)}\star_r^{r-k}\phi^{(r)}\bigr\|_{L^2(\mu^r)}^2&=n^{2r-k+\epsilon}\binom{n-r}{p-r}^4 \frac{\bigl\|\psi_r\star_r^{r-k}\psi_r\bigr\|_{L^2(\mu^r)}^2}{\sigma_n^4}\\
  &\leq D_r n^{4p-2r-k +\epsilon} \frac{\bigl\|\psi_r\star_r^{r-k}\psi_r\bigr\|_{L^2(\mu^r)}^2}{\sigma_n^4}\,,
 \end{align*}
where $D_r$ is a finite constant only depending on $r$.
The proof is concluded by letting $l:=r-k$ and by applying Lemma \ref{genulemma}.\qed

\subsection{Proofs of Theorem \ref{maintheo}, Theorem \ref{maintheo2}, Corollary \ref{maincor}, Theorem \ref{mult1} and Theorem \ref{mult2}}

\begin{proof}[Proof of Theorem \ref{maintheo}]
It is well-known that f.d.d. convergence and tightness together imply weak convergence in $D[0,1]$ (see e.g. \cite[Section 13]{Bil}). F.d.d. convergence and Gaussianity of the limit process $Z$ now follow from Corollary \ref{fidicor}. Moreover, the needed formula for the covariance $\Gamma$ of $Z$ follows from Remark \ref{fidirem} (iii) as
\begin{align*}
& \lim_{n\to\infty}\Cov\bigl(W_n(s),W_n(t)\bigr) \\
 &=\lim_{n\to\infty} \sigma_n^{-2}\sum_{k=1}^p\frac{(s\wedge t)^p  n^{2p-k} (s\vee t)^{p-k}}{k!(p-k)!(p-k)!}\|\psi_k\|_{L^2(\mu^k)}^2\\
 &=\sum_{k=1}^p \frac{(s\wedge t)^p  (s\vee t)^{p-k}   }{k!(p-k)!(p-k)!} \sum_{l=1}^k\binom{k}{l}(-1)^{k-l}\lim_{n\to\infty} \frac{n^{2p-k}}{\sigma_n^2} \biggl(\norm{g_{l}}^2_{L^2(\mu^{l})}-\Bigl(\int_E\psi d\mu\Bigr)^2\biggr)\\
& =\sum_{k=1}^p \frac{(s\wedge t)^p  (s\vee t)^{p-k}   }{k!(p-k)!(p-k)!}  b_k^2 \,.
 \end{align*}
Since this is the same covariance function as that of the process given in \eqref{e:limsum} (with $\alpha_{k,p}^2$ as given in the statement)
we conclude that the limiting process $Z$ may be chosen to have a.s. continuous paths. Now, tightness is implied by Theorem \ref{gentightness}-(ii).
\end{proof}

\begin{proof}[Proof of Theorem \ref{maintheo2}] 
The proof is similar to that of Theorem \ref{maintheo} and follows from Theorems \ref{fiditheo} and Theorem \ref{gentightness}-(i).
\end{proof}

\begin{proof}[Proof of Corollary \ref{maincor}] We directly use Theorem \ref{maintheo}.
In this case, we have $g_k=0$ for all $0\leq k\leq p-1$ and $g_p=\psi$. Moreover, 
\[\sigma_n^2=\binom{n}{p} \norm{\psi}_{L^2(\mu^p)}^2\sim \frac{n^p}{p!}\norm{\psi}_{L^2(\mu^p)}^2 \,.\]
Hence, we have $b_k^2=0$ for all $1\leq k\leq p-1$ and $b_p^2=p!$. From this, we obtain that $\Gamma(s,t)=(s\wedge t)^p$, implying the result.
\end{proof}

\begin{proof}[Outline of the proof of Theorem \ref{mult2}]
Finite-dimensional convergence can again be proved by means of Proposition \ref{fidiprop} with the dimension $d$ appearing there given by 
$m\sum_{j=1}^d p_j$, where $m$ is the number of points $0\leq t_1<t_2<\ldots<t_m\leq 1$ under consideration. 
The computation in the proof of Theorem \ref{maintheo} can be easily generalized to yield the claimed limiting covariance structure. 
Hence, as already pointed out, the vector-valued limiting process $Z$ is an element of $C([0,1];\R^d)$.
Next, computations very similar to those leading to Theorem \ref{fiditheo} yield Conditions (a') and (b'). Finally, Condition (c') 
is obtained by observing that the laws of the family $\{ W^{(n)} : n\geq 1\}$ are tight whenever those of $\{ W^{(n)}(i) : n\geq 1\}$ are tight for each $1\leq i\leq d$; this last point is established by means of
Theorem \ref{gentightness}.
\end{proof}

\begin{proof}[Proof of Theorem \ref{mult1}]
From the expresssion \eqref{defpsis} it is a simple combinatorial exercise to deduce that 
\begin{align*}
\langle \psi_{k}(i),\psi_k(j)\rangle_{L^2(\mu^{k})}&=\sum_{q=0}^k (-1)^{k-q}\binom{k}{q}\langle g_{q}(i),g_q(j)\rangle_{L^2(\mu^{q})}\\
&= \sum_{q=1}^k (-1)^{k-q}\binom{k}{q}\langle \hat{g}_{q}(i),\hat{g}_q(j)\rangle_{L^2(\mu^{q})} \,,
\end{align*}
where $\hat{g}_{q}(i)=g_q(i)-\E[\psi(i)(X_1,\dotsc,X_{p_i})]$. Thus, Condition (a) is yielded by Condition (a') of Theorem \ref{mult2}. 
Finally, Conditions (b) and (c) are derived from Conditions (b') and (c') of Theorem \ref{mult2} by an application of Lemma \ref{genulemma}.
\end{proof}

\subsection{Proof of Theorem \ref{t:maincp}}\label{ss:dede}

For every $n$, define $\pi_n : [n] \to [n]$ to be the bijection given by $\pi_n(i) := n-i+1$, $i=1,...,n$, and also set $\beta(n,t):= n - \lfloor nt\rfloor +1$. We define, for $t\in [0,1]$,
$$
U_n(t) := \sum_{1\leq i<j\leq \lfloor nt\rfloor} \psi^{(n)}(X_i, X_j),
$$
and 
$$
I_n(t) := \sum_{ (i,j) : \pi_n(j) < \pi_n(i) < \beta(n,t)} \psi^{(n)}(X_{i}, X_j).
$$
Then, one has that $Y_n(t) = U_n(1) - U_n(t) - I_n(t)$, in such a way that the tightness of $\{\widetilde{Y}_n\}$ in $D[0,1]$ follows from a direct application of Theorem \ref{gentightness} first to $U_n/\gamma_n$ and then to $I_n/\gamma_n$. The asymptotic Gaussianity of the finite-dimensional distributions of $Y^{(n)}$ now follows from Remark \ref{r:lolo}, and one can check that the covariance function of $Y^{(n)}$ converges to that of $c_1 A + c_2 b$ by a direct computation.

\qed

\section*{Acknowledgements}
We thank two anonymous referees for several constructive remarks. We are grateful to Yannick Baraud, Andrew D. Barbour, Omar El-Dakkak and Ivan Nourdin for useful discussions. The research developed in this paper is supported by the FNR grant {\bf FoRGES (R-AGR-3376-10)} at Luxembourg University.


\bibliographystyle{imsart-number} 
\bibliography{Bibliography}       

%
%
%

\end{document}